\DeclareSymbolFontAlphabet{\mathbb}{AMSb} 
\DeclareSymbolFontAlphabet{\mathbbl}{bbold}
\newcommand{\Prism}{{\mathlarger{\mathbbl{\Delta}}}}
\titleformat{\section}{\centering\large\sc}{\thesection.}{0.7em}{}
\newcommand{\sortas}[1]{}
\newcommand\blfootnote[1]{%
  \begingroup
  \renewcommand\thefootnote{}\footnote{#1}%
  \addtocounter{footnote}{-1}%
  \endgroup
}
\begin{document}
\theoremstyle{plain}
\newtheorem{thm}{Theorem}[section]
\newtheorem{theorem}[thm]{Theorem}

\numberwithin{equation}{thm}
\newtheorem*{thm*}{Theorem}
\newtheorem*{cor*}{Corollary}
\newtheorem*{mthm}{Main Theorem}
\newtheorem*{mcor}{Theorem \ref{maincor}}
\newtheorem{thms}[thm]{Theorems}
\newtheorem{prop}[thm]{Proposition}
\newtheorem{proposition}[thm]{Proposition}
\newtheorem{prodef}[thm]{Proposition and Definition}
\newtheorem{lemma}[thm]{Lemma}
\newtheorem{lem}[thm]{Lemma}
\newtheorem{cor}[thm]{Corollary}

\newtheorem{corollary}[thm]{Corollary}
\newtheorem{claim}[thm]{Claim}
\newtheorem*{claim*}{Claim}

\theoremstyle{definition}
\newtheorem{defn}[thm]{Definition}
\newtheorem{hypothesis}[thm]{Hypothesis}
\newtheorem{defnProp}[thm]{Definition and Proposition}
\newtheorem{definition}[thm]{Definition}
\newtheorem{ex}[thm]{Example}
\newtheorem{exercise}[thm]{Exercise}
\newtheorem{Example}[thm]{Example}
\newtheorem{example}[thm]{Example}
\newtheorem*{acknowledgements}{Acknowledgements}

\newtheorem{notation}[thm]{Notation}
\newtheorem{rem}[thm]{Remark}
\newtheorem{remark}[thm]{Remark}

\theoremstyle{remark}


\def\D{\mathcal{D}}

\def\E{\operatorname{E}}
\def\m{\mathfrak m}

\def\J{\mathrm{J}}

\newcommand{\M}{\mathcal{M}}
\newcommand{\N}{\mathcal{N}}

\newcommand{\fkS}{\mathfrak{S}}

\def\Sp{\operatorname{Spec}}
\def\Spf{\operatorname{Spf}}
\def\Spa{\operatorname{Spa}}

\def\A{{\mathcal A}}
\def\B{{\mathcal B}}
\def\F{{\mathcal F}}
\def\M{{\mathcal M}}
\def\N{{\mathcal N}}
\def\O{{\mathcal O}}
\def\H{{\mathcal H}}
\def\G{{\mathcal G}}
\def\R{{\mathcalR}}
\def\I{{\mathcal I}}
\def\fM{\mathfrak M}
\def\fN{\mathfrak N}
\def\fF{\mathfrak F}
\def\T{\mathcal T}
\def\fl{\pi^\flat}
\def\E{{\mathcal E}}
\renewcommand{\R}{\mathcal{R}}


\newcommand{\mf}{\mathrm{MF}^{\varphi_q}}
\newcommand{\hp}{\mathrm{HP}^{\varphi_q}}
\newcommand{\vect}{\mathrm{Vect}^{\varphi_q}}
\title{\large\bf PRISMATIC F-CRYSTALS AND $E$-CRYSTALLINE\\ GALOIS REPRESENTATIONS}
\author{\normalsize DAT PHAM}
\date{}
\maketitle
\begin{abstract}
Let $K$ be a complete discretely valued field of mixed characteristic $(0,p)$ with perfect residue field, and let $E$ be a finite extension of $\mathbf{Q}_p$ contained in $K$. We show that the category of prismatic $F$-crystals on $\O_K$ (relative to $E$ in a suitable sense) is equivalent to the category of $\O_E$-lattices in $E$-crystalline $G_K$-representations introduced by Kisin--Ren, extending a previous result of Bhatt--Scholze in the case $E=\mathbf{Q}_p$. As a key ingredient in the proof, by adapting a lemma of Du--Liu we prove  a general full faithfulness result for certain vector bundles on the prismatic site, which simplifies and refines the key descent step in the approach of Bhatt--Scholze without invoking the Beilinson fibre sequence. 
\end{abstract}

\blfootnote{\textbf{2020 Mathematics Subject Classification:} 11F80}
\blfootnote{\textbf{Keywords:} prismatic $F$-crystals, crystalline Galois representations}

\setcounter{tocdepth}{1}

\tableofcontents
\section{Introduction}
Let $K/\mathbf{Q}_p$ be a completed discrete valued extension with perfect residue field $k$ of characteristic $p>0$, fixed completed algebraic closure $C$, and absolute Galois group $G_K$. An important aspect of integral $p$-adic Hodge theory is the study of lattices in crystalline (or more generally, semistable) $G_K$-representations. There have been various (partial) classifications of such lattices, including Fontaine--Laffaille's theory \cite{FontaineLaffaille}, Breuil's theory of strongly divisible $S$-lattices \cite{breuil02}, and Kisin's theory of Breuil--Kisin modules \cite{Kis06}. In \cite{prismatic}, Bhatt and Scholze give a site-theoretic description of such lattices, which has the nice feature that it can recover many of the previous classifications by ``evaluating'' suitably. To recall their result, let $(\O_K)_{\Prism}$ denote the absolute prismatic site of $\O_K$; this comes equipped with a structure sheaf $\O_{\Prism}$, a ``Frobenius'' $\varphi: \O_{\Prism}\to \O_{\Prism}$, and an ideal sheaf $\I_{\Prism}\subseteq \O_{\Prism}$.
\begin{defn}
A prismatic $F$-crystal on $\O_K$ is a crystal of vector bundles on the ringed site $((\O_K)_{\Prism},\O_{\Prism})$ equipped with an isomorphism $(\varphi^*\E)[1/\I_{\Prism}]\simeq \E[1/\I_{\Prism}]$; denote by $\mathrm{Vect}^{\varphi}((\O_K)_{\Prism},\O_{\Prism})$ the resulting category. Similarly, we obtain the category $\mathrm{Vect}^{\varphi}((\O_K)_{\Prism},\O_{\Prism}[1/\I_{\Prism}]^{\wedge}_p)$ of so-called Laurent $F$-crystals. 
\end{defn}
In the statement below, $\mathrm{Rep}_{\mathbf{Z}_p}(G_K)$ denotes the category of finite free $\mathbf{Z}_p$-modules $T$ equipped with a continuous $G_K$-action, and $\mathrm{Rep}_{\mathbf{Z}_p}^{\mathrm{cris}}(G_K)$ denotes the subcategory spanned by those $T$ for which $T[1/p]$ is crystalline. 
\begin{thm}[{\cite{prismatic}\footnote{The bottom horizontal equivalence was obtained independently by Z. Wu \cite{prismaticWu}.}}]\label{main thm BS}
There is a commutative diagram 
\begin{displaymath}
    \begin{tikzcd}
    \mathrm{Vect}^{\varphi}((\O_K)_{\Prism},\O_{\Prism})\ar[d,hook]\ar[r,"\simeq"] & \mathrm{Rep}_{\mathbf{Z}_p}^{\mathrm{cris}}(G_K)\ar[d,hook]\\
    \mathrm{Vect}^{\varphi}((\O_K)_{\Prism},\O_{\Prism}[1/\I_{\Prism}]^{\wedge}_p)\ar[r,"\simeq"] & \mathrm{Rep}_{\mathbf{Z}_p}(G_K).
    \end{tikzcd}
\end{displaymath}
Here the vertical embeddings are given by the obvious maps; the horizontal equivalences are induced by evaluating on the Fontaine's prism ${A}_{\inf}$,the so-called \' etale realization functor.
\end{thm}
Motivated by our studies of the stacks of Lubin--Tate $(\varphi,\Gamma)$-modules in \cite{pham2023moduli}, it is natural to ask if there is a variant of Theorem \ref{main thm BS} for coefficient rings other than $\mathbf{Z}_p$. More specifically, let $E$ be a finite extension of $\mathbf{Q}_p$ with residue field $\mathbf{F}_q$ and a fixed uniformizer $\pi$; we are interested in crystalline representations of $G_K$ on finite dimensional $E$-vector spaces (or rather, $G_K$-stable $\O_E$-lattices in such). 
\begin{hypothesis}
We assume throughout that there is an embedding $\tau_0: E\hookrightarrow K$, which we will fix once and for all.
\end{hypothesis} 
\begin{defn}[{\cite{KisinRen}}]
An $E$-linear representation $V$ of $G_K$ is called $E$-crystalline if it is crystalline (as a $\mathbf{Q}_p$-representation), and the $C$-semilinear representation $\bigoplus_{\tau\ne \tau_0}V\otimes_{E,\tau}C$ is trivial\footnote{This is not quite the original definition in \cite{KisinRen}, but can be easily seen to be equivalent to it (see Lemma \ref{analytic rep} below).}.
\end{defn}
A natural source of such representations comes from the rational Tate modules of $\pi$-divisible $\O_E$-modules over $\O_K$ (see Lemma \ref{pi divisible and analytic}). Moreover, just as in the case $E=\mathbf{Q}_p$, $E$-crystalline representations can be classified using weakly admissible filtered $\varphi_q$-modules over $K$. In fact, the above notion is indeed a natural extension of the usual notion in the sense that there is a natural period ring $B_{\mathrm{cris},E}$ with the property that an $E$-linear representation $V$ is $E$-crystalline if and only if $V\otimes_E B_{\mathrm{cris},E}$ is trivial as a $B_{\mathrm{cris},E}$-semilinear representation (cf. Theorem \ref{characterization of E-crystalline main body}).

Using the theory of Lubin--Tate $(\varphi,\Gamma)$-modules, Kisin--Ren give a classification of the category of Galois stable lattices in $E$-crystalline representations of $G_K$ (under a condition on the ramification of $K$) \cite[Theorem (0.1)]{KisinRen}, generalizing the earlier classification in terms of Wach modules by Wach, Colmez and Berger (cf. \cite{bergerlimitcrys}). 

In another direction, in \cite{marks2023prismatic} Marks defines a variant of the (absolute) prismatic site $(\O_K)_{\Prism,\O_E}$ of $\O_K$ ``relative to $\O_E$'', using the notion of $\O_E$-prisms\footnote{These are called $E$-typical prisms in \cite{marks2023prismatic}.}, a mild generalization of prisms: they are roughly $\O_E$-algebras $A$ equipped with a lift $\varphi_q: A\to A$ of the $q$-power Frobenius modulo $\pi$ together with a Cartier divisor $I$ of $\Sp(A)$ satisfying $\pi\in (I,\varphi_q(I))$. It is shown in \textit{loc. cit.} that the \' etale realization functor again defines an equivalence
\begin{align*}
     T: \vect((\O_K)_{\Prism,\O_E},\O_{\Prism}[1/\I_{\Prism}]^{\wedge}_p) &\simeq \mathrm{Rep}_{\O_E}(G_K).
\end{align*}
In this paper, we push this analogy further by showing the following extension of Theorem \ref{main thm BS}.
\begin{thm}[{Theorem \ref{main thm body}}]\label{main thm intro}
The \' etale realization functor ind
uces an equivalence 
\begin{displaymath}
        T: \vect((\O_K)_{\Prism,\O_E},\O_{\Prism})\simeq \mathrm{Rep}^{\mathrm{cris}}_{\O_E}(G_K),
    \end{displaymath}
where the target denotes the category of Galois stable $\O_E$-lattices in $E$-crystalline representations of $G_K$.
\end{thm}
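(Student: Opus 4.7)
The strategy is to adapt the approach of \cite{prismatic} to the $\O_E$-linear setting, combining Marks' étale realization equivalence on the generic fiber with the classification of $E$-crystalline lattices via weakly admissible filtered $\varphi_q$-modules (Theorem \eqref{weakly admissible vs crys rep}). Two prisms in $(\O_K)_{\Prism,\O_E}$ will play a central role: Fontaine's prism $\Prism_{\O_C}$, which computes the étale realization and carries the $G_K$-action, and the $\O_E$-Breuil--Kisin prism $\fkS_{\O_E}$ (a $q$-analogue of $W(k)[\![u]\!]$ adapted to the fixed embedding $\tau_0: E\hookrightarrow K$), whose evaluations produce $\O_E$-Breuil--Kisin modules that encode the filtered Frobenius structure. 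The goal is to identify the essential image of the fully faithful forgetful functor from $\vect((\O_K)_{\Prism,\O_E},\O_{\Prism})$ into $\mathrm{Rep}_{\O_E}(G_K)$ with $\mathrm{Rep}^{\mathrm{cris}}_{\O_E}(G_K)$.

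Fully faithfulness is the easier direction: the internal Hom of two objects of $\vect((\O_K)_{\Prism,\O_E},\O_{\Prism})$ is again a prismatic $F$-crystal, so it suffices to show its global sections are recovered from the étale realization. Since Marks' equivalence for $\O_{\Prism}[1/\I_{\Prism}]^{\wedge}_p$ is already known, the remaining task reduces to checking that $\O_{\Prism}(A)$ embeds into $(\O_{\Prism}[1/\I_{\Prism}]^{\wedge}_p)(A)$ on sufficiently many prisms, which is immediate for $\Prism_{\O_C}$ and $\fkS_{\O_E}$ taken together.

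For the ``image $\subseteq$ crystalline'' direction, given $\E\in \vect((\O_K)_{\Prism,\O_E},\O_{\Prism})$, evaluation at $\fkS_{\O_E}$ yields a finite projective $\varphi_q$-module over $\fkS_{\O_E}$; base change along $\fkS_{\O_E}\to \O_K$ and inversion of $p$ produces a filtered $\varphi_q$-module $D_K$ over $K$. Evaluation at Fontaine's prism, passage to $B_{\mathrm{crys}}$, and the standard prismatic comparison then identifies $D_K\otimes_K B_{\mathrm{crys}}$ with $T(\E)[1/\pi]\otimes_E B_{\mathrm{crys}}$ as filtered $\varphi_q$-modules, proving crystallinity of $T(\E)[1/\pi]$; the vanishing of Hodge--Tate weights in the non-$\tau_0$ embeddings, and hence $E$-crystallinity in the sense of Lemma \eqref{analytic rep}, follows from the fact that the structure map $\fkS_{\O_E}\to \O_K$ factors through $\tau_0$.

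The main obstacle is essential surjectivity. Given $T\in \mathrm{Rep}^{\mathrm{cris}}_{\O_E}(G_K)$, Theorem \eqref{weakly admissible vs crys rep} produces a weakly admissible filtered $\varphi_q$-module $D$ over $K$, from which one builds an $\O_E$-Breuil--Kisin module $M(T)$ recovering $T$ (the $\O_E$-analogue of \cite{Kis06}, in the spirit of \cite{KisinRen}). The hard step is promoting $M(T)$ to a crystal on the entire absolute $\O_E$-prismatic site. Following \cite{prismatic}, I would do this by first passing to the category of \emph{analytic} prismatic $F$-crystals (vector bundles with $\varphi_q$-structure on $\Sp(A)\setminus V(\pi,I)$ for $(A,I)$ ranging over $\O_E$-prisms), constructing the crystal there by Beauville--Laszlo-type gluing over the punctured prismatic site, and then extending across the Hodge--Tate divisor $V(I)$---the last step being unobstructed precisely because $E$-crystallinity of $T$ forces bounded Hodge--Tate weights in the $\tau_0$ direction and vanishing weights in the other embedded directions. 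Compatibility with $\Prism_{\O_C}$ finally identifies the étale realization of the resulting crystal with $T$, closing the equivalence.
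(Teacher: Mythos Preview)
Your overall architecture mirrors the paper's (and \cite{prismatic}'s), but the essential surjectivity sketch skips precisely the step that carries all the weight, and misidentifies where $E$-crystallinity enters.

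Concretely: from a weakly admissible $D$ one does build an $F$-crystal $\M(D)$ over the locus $\{|\I_{\Prism}|\le |\pi|\ne 0\}$, and weak admissibility (via slope $0$) lets you extend its value at the Breuil--Kisin prism to an honest $\fM\in\vect(\fkS_E)$. But $\fM$ only inherits a descent datum over $\fkS_E^{(1)}\langle I/\pi\rangle[1/\pi]$, not over $\fkS_E^{(1)}[1/\pi]$, let alone over the full analytic locus $\Sp(\fkS_E^{(1)})\setminus V(\pi,I)$. Your phrase ``Beauville--Laszlo-type gluing over the punctured prismatic site'' presupposes exactly the missing input: one cannot glue until the descent datum has been extended from $\{|I|\le|\pi|\ne 0\}$ to $\{|\pi|\ne 0\}$. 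In \cite{prismatic} this is Proposition~6.10, proved via the Beilinson fibre sequence; here the paper instead proves a general full faithfulness statement (Proposition~\eqref{boundedness descent data}) for $\vect(A)[1/\pi]\hookrightarrow\vect(A\langle d/\pi\rangle[1/\pi])$ over any transversal $\O_E$-prism, by adapting a matrix-coefficient argument from \cite{duliu}. This is the technical heart of the paper and your plan does not account for it.

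Separately, your claim that the extension ``across $V(I)$'' is where $E$-crystallinity intervenes is off. $E$-crystallinity is used upstream: it yields the weakly admissible filtered $\varphi_q$-module $D$, and weak admissibility (equivalently, slope $0$ over the Robba ring) is what allows $\M(D)$ to descend from the open disk to $\fkS_E$. Once the descent datum is in place over $\fkS_E^{(1)}[1/\pi]$, the remaining extension to $\fkS_E^{(1)}$ goes through the \'etale point exactly as in \cite[\S6.4]{prismatic} and uses nothing further about Hodge--Tate weights. Finally, your full faithfulness sketch only really argues faithfulness; fullness requires the nontrivial input that $\vect(\fkS_E)\to\vect(\fkS_E[1/I]^{\wedge}_{\pi})$ is fully faithful (Proposition~\eqref{fully faithful etale realization Kisin}), which you should name explicitly.
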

As will be explained in \textsection \ref{relation kisin--ren} below, by evaluating at a suitable prism in $(\O_K)_{\Prism}$, Theorem \ref{main thm intro} encodes the main equivalence from Kisin--Ren's work \cite{KisinRen}.

Finally, by combining Theorem \ref{main thm intro} with a key result from \cite{prismaticdieudonne} (generalized to the ``$\O_E$-context'' in \cite{ito2023prismatic}), we deduce the following classification of $\pi$-divisible $\O_E$-modules over $\O_K$.

\begin{theorem}[{Theorem \ref{classification pi divisible Cheng}}]
There is a natural equivalence between the category of $\pi$-divisible $\O_E$-modules over $\O_K$ and the category of minuscule Breuil--Kisin modules over $\fkS_E$.
\end{theorem}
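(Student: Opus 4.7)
The plan is to deduce the classification by combining Theorem~\eqref{main thm intro} with the $\O_E$-variant of the prismatic Dieudonné theorem of \cite{prismaticdieudonne}, established in \cite{ito2023prismatic}, interposing the Breuil--Kisin prism $(\fkS_E,E(u))$ as a test object in $(\O_K)_{\Prism,\O_E}$.

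First I would apply the main result of \cite{ito2023prismatic} to identify the category of $\pi$-divisible $\O_E$-modules over $\O_K$ with the full subcategory of admissible prismatic Dieudonné crystals on $(\O_K)_{\Prism,\O_E}$. Unwinding the definition, these admissible crystals are exactly the minuscule prismatic $F$-crystals $\E$: those for which the Frobenius-linearized map $\varphi^*\E\to\E$ is injective with cokernel annihilated by $\I_{\Prism}$.

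Next I would evaluate such a minuscule crystal at the Breuil--Kisin prism $(\fkS_E,E(u))$. Since $\fkS_E$ is a weakly final object of the absolute site $(\O_K)_{\Prism,\O_E}$, the functor $\E\mapsto \E(\fkS_E)$ is fully faithful, and the minuscule condition on $\E$ translates verbatim to the usual minuscule condition on the resulting Breuil--Kisin module $\fkM=\E(\fkS_E)$, because the ideal sheaf $\I_{\Prism}$ evaluates to the principal ideal $(E(u))\subset \fkS_E$. Composing with the previous step yields a fully faithful functor from $\pi$-divisible $\O_E$-modules to minuscule Breuil--Kisin modules over $\fkS_E$; essential surjectivity is obtained by the inverse construction, in which a minuscule Breuil--Kisin module $\fkM$ is extended to a minuscule prismatic $F$-crystal by \v{C}ech descent along the self-products of $\fkS_E$, and then shown via \cite{ito2023prismatic} to arise from a unique $\pi$-divisible $\O_E$-module.

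The main obstacle, and the role of Theorem~\eqref{main thm intro}, is in verifying that the resulting equivalence is compatible with the \'etale realization, so that a $\pi$-divisible $\O_E$-module $\Gamma$ is sent to a Breuil--Kisin module whose \'etale realization recovers the Tate module $T_\pi(\Gamma)$ sitting inside the $E$-crystalline representation $V_\pi(\Gamma)$, whose Hodge--Tate weights lie in $\{0,1\}$ by Lemma~\eqref{pi divisible and analytic}. This compatibility pins down the equivalence uniquely and confirms that ``minuscule'' on the crystal side matches ``minuscule'' on the Breuil--Kisin side, without any residual ambiguity in sign conventions or Tate twists.
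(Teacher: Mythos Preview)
Your route and the paper's are genuinely different, and your invocation of Theorem~\eqref{main thm intro} does not match either one.

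The paper's argument passes through Galois representations. It combines Theorem~\eqref{liu} (the classical Fontaine--Kisin--Raynaud--Tate equivalence between $p$-divisible groups over $\O_K$ and $\mathbf{Z}_p$-lattices in crystalline representations with Hodge--Tate weights in $\{0,1\}$) with Lemma~\eqref{pi divisible and analytic} (strictness of the $\O_E$-action is exactly the $E$-analytic condition on $V_p(G)$) to identify $\pi$-divisible $\O_E$-modules with $\O_E$-lattices in $E$-crystalline representations of weights $\{0,1\}$. Then the main Theorem~\eqref{main thm body} identifies the latter with minuscule prismatic $F$-crystals, and finally Proposition~\eqref{breuil kisin eqvaluate equivalence} (the evaluation result from \cite{ito2023prismatic}) passes to $\mathrm{BK}_{\min}(\fkS_E)$. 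So the main theorem is used substantively, as one leg of the chain.

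By contrast, you propose to go directly from $\pi$-divisible $\O_E$-modules to minuscule prismatic Dieudonn\'e crystals via a full $\O_E$-version of the Ansch\"utz--Le~Bras equivalence, and then evaluate at $\fkS_E$. If that equivalence is indeed available in \cite{ito2023prismatic}, your chain already gives the result and Theorem~\eqref{main thm intro} plays no role whatsoever; your final paragraph, which presents it as resolving ``the main obstacle'' via compatibility with the \'etale realization, misidentifies its function. Note also that the paper extracts from \cite{ito2023prismatic} only the evaluation statement $\mathrm{DM}(\O_K)\simeq \mathrm{BK}_{\min}(\fkS_E)$ (Proposition~\eqref{breuil kisin eqvaluate equivalence}), not the full Dieudonn\'e equivalence you invoke in your first step; and your essential surjectivity sketch (``extend $\fkM$ to a crystal by \v{C}ech descent along self-products of $\fkS_E$'') is precisely the content of that proposition, not something that follows from $\fkS_E$ being a cover. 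So either you are assuming a stronger input than the paper does, or you are reproving Proposition~\eqref{breuil kisin eqvaluate equivalence} without saying so, and in either case Theorem~\eqref{main thm intro} is idle in your argument rather than central as the paper intends.
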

\subsection{Sketch of the proof of Theorem \texorpdfstring{\ref{main thm intro}}{1.5}}
As alluded to above, an important observation is that the condition of being $E$-crystalline can be characterized in a manner similar to the usual notion for $E=\mathbf{Q}_p$. Namely, there is a natural period ring $B_{\mathrm{cris},E}$ with the property that an $E$-linear representation $V$ is $E$-crystalline if and only if $V\otimes_E B_{\mathrm{cris},E}$ is trivial as a $B_{\mathrm{cris},E}$-representation; see Theorem \ref{characterization of E-crystalline main body}. Once this is justified, that the \' etale realization functor is well-defined and fully faithful can be proved in exactly the same way as in \cite{prismatic}. For essential surjectivity, we again follow largely the proof in \cite{prismatic}; the main difference here is that instead of invoking the Beilinson fibre sequence from \cite{AMMNBeilinson} for the key descent step, we are able to prove the following more general result by adapting a key lemma from \cite{duliu}. 
\begin{prop}[{Theorem \ref{boundedness descent data}}]\label{nice statement}
Let $(A,(d))$ be a transversal $\O_E$-prism. Then the base change
\begin{displaymath}
    \mathrm{Vect}^{\varphi_q}(A)[1/\pi]\to \mathrm{Vect}^{\varphi_q}(A\langle d/\pi\rangle [1/\pi])
\end{displaymath}
is fully faithful; here the source denotes the isogeny category of $\vect(A)$.
\end{prop}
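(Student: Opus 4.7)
The plan is to reduce full faithfulness to a statement about $\varphi_q$-invariants of an auxiliary module, prove injectivity by transversality, and then handle surjectivity via a Frobenius-iteration argument following \cite{duliu}.

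For the reduction: given $M, N \in \vect(A)$, set $P := \Hom_A(M, N)$ with its induced $\varphi_q$-structure, and write $B := A\langle d/\pi\rangle$. By finite projectivity of $M$, we have $\Hom_B(M \otimes_A B, N \otimes_A B) \simeq P \otimes_A B$, so full faithfulness of base change on isogeny categories is equivalent to showing
\[
P^{\varphi_q=1}[1/\pi] \xrightarrow{\sim} (P \otimes_A B)^{\varphi_q=1}[1/\pi].
\]
Injectivity is straightforward: transversality---that $(\pi,d)$ is a regular sequence in $A$---ensures that $A \to B$ stays injective after inverting $\pi$ (one checks that $A[d/\pi]$, and then its $(\pi,d)$-adic completion $B$, remain $\pi$-torsion free). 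Tensoring with the flat module $P$ preserves injectivity, and so does passing to $\varphi_q$-invariants.

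The heart of the proof is surjectivity. Given $x \in (P \otimes_A B)^{\varphi_q=1}$, we aim to show that $\pi^N x$ lies in the image of $P$ for some $N \gg 0$. Expand $x = \sum_{k \geq 0} x_k (d/\pi)^k$ with $x_k \in P$ tending $(\pi,d)$-adically to zero. The prism relation $\pi = \alpha d + \beta \varphi_q(d)$, with $\beta$ a unit modulo $d$ by transversality, lets one rewrite $\varphi_q(d)/\pi$ as a series in $d/\pi$ convergent in $B$. Applying $\varphi_q$ to the expansion of $x$ and re-expanding in powers of $d/\pi$, the equation $\varphi_q(x) = x$ translates into recursive identities on the coefficients $x_k$. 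Mimicking the key lemma of \cite{duliu}, one iterates these identities by repeated Frobenius pullback, exploiting transversality (which guarantees the relevant elements are non-zero-divisors in each quotient) to show that, after inverting $\pi$, the higher coefficients $x_k$ for $k \geq 1$ must vanish.

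The main obstacle is the control of the $\pi$-denominators accumulating through the iteration: one must verify that the recursion produces only bounded $\pi$-torsion, which is the technical crux of the argument and the precise analog of Du--Liu's lemma in the $\O_E$-setting. Transversality is essential here, as it ensures that $\pi, d, \varphi_q(d), \ldots$ and all the auxiliary elements appearing in the recursion remain non-zero-divisors modulo the ideals $(\pi^n, d^m)$ used to track the successive approximations, so that the recursion is both well-defined and terminates with a uniform bound on the $\pi$-torsion.
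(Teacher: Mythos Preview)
Your reduction to studying $\varphi$-invariants of the internal Hom $P=\Hom_A(M,N)$ is correct and essentially equivalent to the paper's setup. Injectivity is handled correctly.

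However, the surjectivity sketch contains a genuine gap and a misstatement of the key equation. The condition for $x$ to be $\varphi$-equivariant is \emph{not} ``$\varphi_q(x)=x$'' with $\varphi_q$ the ring Frobenius applied entrywise: it involves the Frobenius structures of $M$ and $N$, which are only isomorphisms after inverting $d$. Concretely, after reducing to the free case (which requires a small argument, Lemma~\ref{direct summand of free} in the paper) and choosing bases, the equation is $YA_1=A_2\varphi_q(Y)$ with $A_1,A_2\in\mathrm{GL}_d(A[1/d])$, hence $d^hY=B\varphi_q(Y)C$ for some $h\geq 0$ and $B,C\in M_d(A)$. The factor $d^h$ is not incidental noise but the engine that makes the iteration converge.

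Your proposed mechanism---expand $x$ in powers of $d/\pi$ and show the higher coefficients vanish---is not how the argument goes, and it is unclear it can be made to work directly (for one thing, such expansions are not unique). The paper instead proceeds in two steps you do not mention. First, a bootstrap: the equation $d^hY=B\varphi_q(Y)C$ together with Lemma~\ref{ring theoretic facts} forces $Y$ into the strictly smaller ring $A\langle\varphi_q^2(d)/\pi\rangle[1/\pi]$. Second, a filtration estimate (Lemma~\ref{lemma phi(Fil) Du-Liu}): for $m\gg 0$ one has $\varphi_q(\mathrm{Fil}^m)\subseteq A+d^h\mathrm{Fil}^{m+1}$, where $\mathrm{Fil}^m=(d^{q^2}/\pi)^mA\langle\varphi_q^2(d)/\pi\rangle$. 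This is what allows the inductive construction of $Y_n\in M_d(A)$ with $Y-\sum_{k\leq n}d^{q^2k}Y_k\in\mathrm{Fil}^{n+1}$, so that $Y$ is the $d$-adic limit of elements of $M_d(A)$ and hence lies in $M_d(A)$ itself (not just $M_d(A[1/\pi])$, after the initial scaling). Your sketch gestures at a Frobenius iteration but supplies neither the bootstrap nor the filtration control, which together constitute the actual content of the Du--Liu argument.
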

We regard Proposition \ref{nice statement} as a result of independent interest. For instance, as mentioned above, by specializing to the prism $A=\Prism_{\O_C\widehat{\otimes}_{\O_K}\O_C}$, this recovers (and refines) Proposition 6.10 in \cite{prismatic}. Furthermore, by specializing to a Breuil--Kisin prism $(\fkS,I)$, this recovers the embedding 
\begin{displaymath}
    \mathrm{Vect}^{(\varphi,N)}(\fkS)[1/p]\hookrightarrow \mathrm{Vect}^{(\varphi,N)}(\O)
\end{displaymath}
from \cite[Lemma 1.3.13]{Kis06} without using Kedlaya's results on slope filtrations; here $\O$ denotes the ring of functions on the rigid open unit disk over $K_0$.

The proof of Proposition \ref{nice statement} proceeds by first reducing to the case of finite free modules. In this case, by working with matrices for the $\varphi_q$-actions, we reduce to showing that if
\begin{displaymath}
 d^hY=B\varphi_q(Y)C   
\end{displaymath} 
with $h\geq 0$, $Y\in M_d(A\langle d/\pi\rangle)$ and $B, C\in M_d(A)$, then $Y\in M_d(A[1/\pi])$. Here the idea (due to Du--Liu) is to approximate $d$-adically $Y$ by matrices in $M_d(A)$. This is possible thanks to the following variant of \cite[Lemma 2.2.10]{duliu} on the contracting effect of the Frobenius on the $d$-adic filtration on $A\langle d/\pi\rangle$. 
\begin{lem}[Lemma \ref{lemma phi(Fil) Du-Liu}]\label{lemma phi(Fil) Du-Liu}
    Let $(A,(d))$ be a transversal $\O_E$-prism. Then given any $h\geq 0$, 
    \begin{displaymath}
        \varphi_q(d^m A\langle d/\pi\rangle)\subseteq A+d^{m+h}A\langle d/\pi\rangle
    \end{displaymath}
    for all $m\gg 0$ (depending only on $h$).
\end{lem}

We now detail the organization of the chapter. In Section \ref{section analytic reps}, we recall the definition of $E$-crystalline representations from \cite{KisinRen}, and then in the appendix, following \cite[Chapitre 10]{FFCurves}, we interpret it in terms of vector bundles on the Fargues--Fontaine curve (Proposition \ref{equivalence E crystalline}). In particular, we show that the category of $E$-crystalline representations of $G_K$ is equivalent to the category of weakly admissible filtered $\varphi_q$-modules over $K$, and moreover that being $E$-crystalline is equivalent to being $B_{\mathrm{cris},E}$-admissible for a natural period ring $B_{\mathrm{cris},E}$. In Section \ref{BK theory}, we adapt some constructions from \cite{Kis06} to the present context. Next, in Section \ref{prismatic story}, we review briefly the notion of $\O_E$-prisms and define the \' etale realization functor in Theorem \ref{main thm BS}. Full faithfulness is then addressed in Subsection \ref{full faithfulness of etale realization}. Subsection \ref{essential surjectivity} begins with some further ring-theoretic properties of transversal prisms, culminating with the proof of Proposition \ref{boundedness descent data}, which is then used in the proof of essential surjectivity. Finally, in the last two subsections, we briefly discuss an application of Theorem \ref{main thm intro} to the theory of $\pi$-divisible $\O_E$-modules over $\O_K$ as well as its relation with Kisin--Ren's classification in \cite{KisinRen}.     
\begin{acknowledgements}
I would like to thank my advisors Bao Le Hung and Stefano Morra for their continuous support during the preparation of this paper. I would also like to thank Tong Liu for making me aware of the paper \cite{CaisLiu}. Finally, I am grateful to the referee for their helpful suggestions which help improve
the readability of the paper.

This project has received funding from the European Union’s Horizon 2020 research and innovation programme under the Marie Sk\l odowska-Curie grant agreement No. 945322.
\end{acknowledgements}
\section{\texorpdfstring{$E$}{E}-crystalline Galois representations}\label{section analytic reps}
\subsection{Definitions}
Recall again that we have fixed throughout an embedding $\tau_0: E\hookrightarrow K$; a general embedding $E\hookrightarrow C$ will be typically denoted by $\tau$. Let $V$ be an $E$-linear representation of $G_K$ which is crystalline (in the usual sense, i.e., as a $\mathbf{Q}_p$-linear representation). Then 
\begin{displaymath}
D_{\mathrm{dR}}(V):=(V\otimes_{\mathbf{Q}_p}B_{\mathrm{dR}})^{G_K}
\end{displaymath}
is naturally a finite free $E\otimes_{\mathbf{Q}_p}K$-module equipped with a (finite, separated, exhausted) filtration by $E\otimes_{\mathbf{Q}_p}K$-submodules (whose associated graded pieces are finite projective, but not necessarily of constant rank, or equivalently, free). In particular, there is a decomposition 
\begin{align}\label{decomposition bdr}
    D_{\mathrm{dR}}(V)=\prod_{\m}D_{\mathrm{dR}}(V)_{\m}
\end{align}
where $\m$ runs over the (finite) set of maximal ideals in $E\otimes_{\mathbf{Q}_p}K$.

The following definition was given by Kisin--Ren \cite{KisinRen}.
\begin{defn}\label{analytic definition}
$V$ is called $E$-crystalline if (it is crystalline and) the induced filtration on $D_{\mathrm{dR}}(V)_{\m}$ is trivial for all $\m\ne \m_0$, where $\m_0$ denotes the maximal ideal corresponding to the multiplication map $K\otimes_{\mathbf{Q}_p}E\twoheadrightarrow K$ defined by the embedding $\tau_0$.
\end{defn}
\begin{lem}\label{analytic rep}
Let $V$ be a crystalline $E$-linear representation of $G_K$. Then $V$ is $E$-crystalline if and only if $\bigoplus_{\tau\ne \tau_0}V\otimes_{E,\tau}C$ is trivial as a $C$-semilinear representation of $G_K$\footnote{This latter condition is called ``$E$-analytic'' in many references (see e.g. \cite{Bergeranalytic}); we have decided to use the original term ``$E$-crystalline'' of Kisin--Ren to avoid potential confusion.}.
\end{lem}
Here the action of $g\in G_K$ on $\bigoplus_{\tau\ne \tau_0}V\otimes_{E,\tau}C$ is given by the maps $1\otimes g: V\otimes_{E,\tau}C\to V\otimes_{E,g\tau}C$. (Note that the diagonal action of $G_K$ on $V\otimes_{E,\tau}C$ is not well-defined in general: the map $\tau: E\hookrightarrow C$ is not $G_K$-equivariant unless $\tau(E)\subseteq K$.) 
\begin{proof}
Recall that the filtration on $D_{\mathrm{dR}}(V)$ satisfies $\mathrm{gr}^i D_{\mathrm{dR}}(V)\simeq (V\otimes_{\mathbf{Q}_p}C(i))^{G_K}$ for each $i$. Thus using the decomposition $V\otimes_{\mathbf{Q}_p}C=\prod_{\tau}V\otimes_{E,\tau}C$, one can check that $\mathrm{gr}^i D_{\mathrm{dR}}(V)_{\m}=0$ for all $i\ne 0$ and $\m\ne \m_0$ if and only if the $C$-semilinear representation $W:=\bigoplus_{\tau\ne \tau_0}V\otimes_{E,\tau}C$ satisfies $(W\otimes_{C}C(i))^{G_K}=0$ for all $i\ne 0$. As $W$ is Hodge--Tate (being a quotient of the Hodge--Tate representation $V\otimes_{\mathbf{Q}_p}C$), this amounts precisely to saying that $W$ is trivial. 
\end{proof}
\subsection{Relation with filtered isocrystals}
For our purpose of proving Theorem \ref{main thm intro}, the following equivalent characterizations of the category of $E$-crystalline Galois representations will be fundamental. Before stating the result, we introduce the crysalline period ring in our context.
\begin{notation}
Let $B_{\mathrm{cris},E}$ denote Fontaine's crystalline period ring defined using $E$ and $\tau_0: E\hookrightarrow K\subseteq C$. More precisely, let ${A}_{\inf,E}:=W_{\O_E}(\O_C^{\flat})$ (defined using the embedding $\tau_0$), and let $A_{\mathrm{cris},E}$ be the $\pi$-completed $\O_E$-PD envelope of $A_{\inf,E}$ with respect to the kernel of the Fontaine's map $\theta_E: A_{\inf,E}\twoheadrightarrow \O_{C}$, i.e. $A_{\mathrm{cris},E}$ is the $\pi$-adic completion of the subring
\begin{displaymath}
    A_{\inf,E}[\xi^{q^n}/\pi^{1+q+\ldots+q^{n-1}},n\geq 1]\subseteq A_{\inf,E}[1/\pi],
\end{displaymath}
where $\xi$ is one (or, any) generator of $\ker(\theta_E)$. We then let $B_{\mathrm{cris},E}^+:=A_{\mathrm{cris},E}[1/\pi]$ and $B_{\mathrm{cris},E}:=B_{\mathrm{cris},E}^+[1/t_E]$\footnote{In \cite{KisinRen}, ${B}_{\mathrm{cris},E}$ denotes the ring $B_{\mathrm{cris}}\otimes_{E_0}E$; these are in general different rings.}. These will play the roles of the usual crystalline period rings in the story over $\mathbf{Q}_p$. (As the notation suggests $t_E$ is the analogue of the usual element $t=\log([\epsilon])$ (the ``$2\pi i$ of Fontaine'') in our ``$\O_E$-context''; see Appendix \ref{appendix crystalline} for more details.)    
\end{notation}
\begin{thm}[Remark \ref{analytic crys natural notion}, Theorem \ref{weakly admissible vs crys rep}]\label{characterization of E-crystalline main body}
\begin{itemize}
    \item[\emph{(1)}] Let $V\in \mathrm{Rep}_E(G_K)$. Then $V$ is $E$-crystalline if and only if $V\otimes_{E}B_{\mathrm{cris},E}$ is trivial as a $B_{\mathrm{cris},E}$-semilinear representation of $G_K$.
    \item[\emph{(2)}] The functor 
\begin{align*}
    D_{\mathrm{cris},E}: V & \mapsto (V\otimes_E B_{\mathrm{cris},E})^{G_K}
\end{align*}
defines an equivalence from the category of $E$-crystalline representations of $G_K$ onto the category of weakly admissible filtered $\varphi_q$-modules.
\end{itemize}  
\end{thm}
(We refer the reader to Appendix \ref{appendix crystalline} for the notion of (weakly admissible) filtered $\varphi_q$-modules, which is simply a straightforward extension of the usual notion.)

Our proof of Theorem \ref{characterization of E-crystalline main body} rests on the realization of the relevant objects as certain vector bundles on the Fargues--Fontaine curve (associated to $E$ and the embedding $\tau_0$). Since this will require a digression on the Fargues--Fontaine curve which has little do to with the rest of the chapter, we defer the proof to Appendix \ref{appendix crystalline}. 
\begin{remark}
    The above equivalence between $E$-crystalline representations of $G_K$ and weakly admissible filtered $\varphi_q$-modules is presumably standard, although we cannot find a reference which explicitly states it. We can deduce it more directly (as an abstract equivalence) by combining the usual equivalence for $E=\mathbf{Q}_p$ with a standard passage from $\varphi$-modules to $\varphi_q$-modules (cf. \cite[\textsection 3.3]{KisinRen}). Here we prefer the more geometric perspective via the Fargues--Fontaine curve as it gives an explicit recipe for the equivalence as above, and also makes the analogy with the usual case $E=\mathbf{Q}_p$ more transparent\footnote{It also seems to suggest the idea that in order to treat all $E$-linear crystalline (i.e. not necessarily $E$-crystalline) representations of $G_K$, one should consider modifications of vector bundles on the Fargues--Fontaine curve $X_E$ at finitely many points (rather than just one point as in the case $E=\mathbf{Q}_p$).}. 
\end{remark}
\section{Theory of Breuil--Kisin modules}\label{BK theory}
In this section, we adapt some constructions from \cite{Kis06} to the ``$\O_E$-context''. We note that in \cite{CaisLiu}, Cais and Liu have extended a large part of the theory in \cite{Kis06} to accommodate more general coefficient rings and lifts of Frobenius. In particular, we do not claim originality in this part. However, in our present context (corresponding to the Frobenius lift $f(u)=u^q$) much of the discussion in \cite{CaisLiu} simplifies, and we can present the material largely in parallel with \cite{Kis06}.

\subsection{Preliminaries}
We fix once and for all a uniformizer $\pi_K\in K$. Let $E(u)\in W_{\O_E}(k)[u]$ be the Eisenstein polynomial of $\pi_K$ over $K_{0,E}$. Let $\fkS_E:=W_{\O_E}(k)[[u]]$ and note $\varphi_q: \fkS_E\to \fkS_E$ the ring map that extends the Frobenius in $W_{\O_E}(k)$ and sends $u$ to $u^q$. The map $\tilde{\theta}: \fkS_E\twoheadrightarrow \O_K, u\mapsto \pi_K$ is surjective with kernel $I=(E(u))$. 

Let $\Delta:=\Spa(\fkS_E)-\{\pi=0\}$ be the adic generic fiber of $\mathrm{Spa}(\fkS_E)$ over $K_{0,E}$; this is the adic open unit disk over $K_{0,E}$. Let $\O$ be the ring of functions on $X$. As $\Delta$ is an increasing union of the rational opens $U_n:=\{|\varphi_q^n(E(u))|\leq |\pi|\ne 0\}=\{|u^{eq^n}|\leq |\pi|\ne 0\}$ for $n\geq 0$, we have
\begin{align}\label{functions open disk inverse limit}
\O &=\varprojlim_n\fkS_E\left\langle \dfrac{\varphi_q^n(E(u))}{\pi}\right\rangle [1/\pi]\\
&=\bigcap_{n\geq 0}K_{0,E}\left\langle \dfrac{u^{e q^n}}{\pi}\right\rangle\;\text{inside $K_{0,E}[[u]]$}.
\end{align}
Recall that by the work of Lazard \cite{Lazard} that each $K_{0,E}\langle u^{eq^n}/\pi\rangle$ is a PID and $\O$ is a Bézout domain. In particular, finite projective modules over $\O$ are free (see e.g. \cite[Proposition 2.5]{kedlaya}). Moreover, base change defines an equivalence
\begin{displaymath}
    \mathrm{Vect}(\O)\simeq \lim_n\mathrm{Vect}(\fkS_E\langle u^{e q^n}/\pi\rangle [1/\pi]).
\end{displaymath}
Denote again by $\varphi_q: \O\to \O$ the map induced by $\varphi_q$ on $\fkS_E$. For $n\geq 0$, let $x_n\in \Delta$ be the unique point where $\varphi_q^n(E(u))$ vanishes, i.e. $x_n: \fkS_E\to \fkS_E/\varphi_q^n(E(u))[1/\pi]=:K_n$. Let $\widehat{\fkS}_n$ denote the complete local ring of $\Delta$ at $x_n$; this is a complete DVR with uniformizer $\varphi_q^n(E(u))$, residue field $K_n$, and fraction field $\mathrm{Fr}(\widehat{\fkS}_n)=\widehat{\fkS}_n[1/\varphi_q^n(E(u))]$.

As $E(u)/E(0)\in \fkS_E[1/\pi]$ has constant term $1$, the infinite product
\begin{displaymath}
    \lambda:=\prod_{n\geq 0}\varphi_q^n(E(u)/E(0)).
\end{displaymath}is well-defined as an element in $\O$. By design, $\lambda$ has a simple root at each $x_n$. 
\begin{defn}
A $\varphi$-module (of finite height) over $\fkS_E$ is a finite free $\fkS_E$-module $\fM$ equipped with an isomorphism 
\begin{displaymath}
(\varphi_q^* \fM)[1/E(u)]\simeq \fM[1/E(u)].
\end{displaymath}
We denote by $\mathrm{Vect}^{\varphi}(\fkS)$ the category of $\varphi$-modules over $\fkS$. In analogy with the case $E=\mathbf{Q}_p$, we will often refer to objects in $\mathrm{Vect}^{\varphi}(\fkS)$ as Breuil--Kisin modules over $\fkS_E$.

Similarly, we let $\mathrm{Vect}^{\varphi}(\O)$ denote the category of $\varphi$-modules over $\O$, i.e. finite free $\O$-modules $\M$ equipped with an isomorphism $(\varphi_q^*\M)[1/E(u)]\simeq \M[1/E(u)]$.
\end{defn}
\begin{lem}[Analytic continuation of $\varphi$-modules to the open unit disk]\label{frobenius trick} Base change defines an equivalence of categories
\begin{displaymath}
    \mathrm{Vect}^{\varphi_q}(\O)\simeq \mathrm{Vect}^{\varphi_q}(\fkS_E\langle E(u)/\pi\rangle [1/\pi]).
\end{displaymath}
\end{lem}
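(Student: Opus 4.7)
This is a classical ``Frobenius analytic continuation'' statement: the finite-height Frobenius structure on a $\varphi_q$-module over the small affinoid $U_0 = \Spa(R_0) \subset \Delta$ forces a unique extension to the whole open unit disk. Writing $R_n := \fkS_E\langle \varphi_q^n(E(u))/\pi\rangle[1/\pi]$, we have $\O = \varprojlim_n R_n$ and the already-recorded equivalence $\mathrm{Vect}(\O) \simeq \varprojlim_n \mathrm{Vect}(R_n)$. So the proof reduces to building, from $(M_0,\Phi_0) \in \vect(R_0)$, a functorially-determined compatible tower $(M_n)_{n \geq 0}$ with $M_n \in \mathrm{Vect}(R_n)$ and $M_{n+1}|_{R_n} \simeq M_n$, together with a Frobenius structure $\varphi_q^*\M[1/E(u)] \simeq \M[1/E(u)]$ on the limit module $\M$ that extends $\Phi_0$.

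For the inductive step I would cover $U_{n+1} = U_n \cup W_{n+1}$ by the two rational subdomains $U_n$ and $W_{n+1} := U_{n+1} \cap \{|E(u)| \geq |\pi|\}$, on which $E(u)$ is invertible. On $U_n$ take the already-constructed module $M_n$; on $W_{n+1}$ take the Frobenius pullback $\varphi_q^* M_n$ along the ring map $\varphi_q: R_n \to \O(W_{n+1})$, which is well-defined since $\varphi_q^*$ carries $W_{n+1} \subset U_{n+1}$ into $U_n$. On the overlap $U_n \cap W_{n+1}$, the element $E(u)$ is a unit, so the Frobenius isomorphism $\Phi_n: \varphi_q^* M_n[1/E(u)] \xrightarrow{\sim} M_n[1/E(u)]$ is an honest isomorphism, supplying the gluing datum. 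A standard gluing of finite projective modules on the rational cover yields $M_{n+1}$ on $R_{n+1}$ restricting to $M_n$ on $U_n$; by Lazard's theorem \cite{Lazard}, $R_{n+1}$ is B\'ezout, so $M_{n+1}$ is finite free. The Frobenius structure $\Phi_{n+1}$ is assembled in parallel from $\Phi_n$ on $U_n$ and from the pullback structure on $W_{n+1}$, and becomes an iso precisely after inverting $E(u)$.

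Fully faithfulness follows by running the same propagation argument on morphisms: a $\varphi_q$-equivariant map $f_0: M_0 \to N_0$ over $R_0$ extends uniquely to a morphism $f_n: M_n \to N_n$ at each stage, by gluing $f_{n-1}$ on $U_{n-1}$ with the Frobenius pullback $\varphi_q^* f_{n-1}$ on $W_n$; the two pieces agree on the overlap thanks to the $\varphi_q$-equivariance relation $f \circ \Phi_M = \Phi_N \circ \varphi_q^* f$, valid precisely where $E(u)$ is inverted. Uniqueness of the extension then follows from the uniqueness of the gluing, and passing to the limit gives the desired identification of Hom sets.

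The main technical obstacle is to check that the assembled Frobenius structures on the $M_n$'s stabilize in the inverse limit to a genuine $\varphi_q$-module structure on $\M$ that inverts exactly $E(u)$ (as required by the definition of $\vect(\O)$), and that the construction is functorial in $(M_0,\Phi_0)$. Both reduce to careful bookkeeping of how $\Phi_n$ is inherited from $\Phi_{n-1}$ at each step, which ultimately comes down to the compatibility of Frobenius pullback with the restriction map $R_{n+1} \to R_n$; no new ideas beyond the basic gluing construction are needed, and the analogous result in the case $E=\mathbf{Q}_p$ (Kisin's \cite[Lemma 1.3.13]{Kis06}) proceeds similarly.
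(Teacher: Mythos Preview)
Your approach is correct and is essentially the same as the paper's: both rest on the Frobenius pullback trick to propagate from $U_0$ to each $U_n$, followed by the identification $\mathrm{Vect}^{(\varphi_q)}(\O)\simeq\varprojlim_n\mathrm{Vect}^{(\varphi_q)}(R_n)$. The paper simply outsources the first step to \cite[Remark 6.6]{prismatic}, whereas you spell it out via the explicit rational cover $U_{n+1}=U_n\cup W_{n+1}$ and Tate gluing; conversely, the paper is slightly more explicit than you on the limit step, noting that the equality $\O[1/E(u)]=\bigcap_n R_n[1/E(u)]$ is what guarantees that the glued Frobenius inverts exactly $E(u)$. One small correction: your closing reference to \cite[Lemma 1.3.13]{Kis06} is misplaced---that lemma is the equivalence $\mathrm{Vect}^{\varphi}(\fkS)[1/p]\simeq\mathrm{Vect}^{\varphi,0}(\O)$ (Proposition~\ref{slope 0 and BK modules} here), not the analytic continuation statement.
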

\begin{proof}
This is an application of the Frobenius pullback trick. More precisely, as explained in \cite[Rem. 6.6]{prismatic}, by using the contracting property of $\varphi_q$, any object in $\mathrm{Vect}^{\varphi_q}\langle E(u)/\pi\rangle [1/\pi]$ extends uniquely to an object in $\mathrm{Vect}^{\varphi_q}\langle \varphi_q^n(E(u))/\pi\rangle [1/\pi]$ for any $n\geq 1$. It now suffices to show that base change gives an equivalence 
\begin{displaymath}
    \mathrm{Vect}^{\varphi}(\O)\simeq \lim_n\mathrm{Vect}^{\varphi}(\fkS_E\langle \varphi_q^n(E(u))/\pi\rangle [1/\pi]).
\end{displaymath}
This follows formally from the analogous equivalence at the level of vector bundles, and the equality 
\begin{displaymath}
    \O\left[\dfrac{1}{E(u)}\right]=\bigcap_{n\geq 0}\left(\fkS_E\left\langle \dfrac{\varphi_q^n(E(u))}{\pi}\right\rangle \left[\dfrac{1}{\pi}\right]\left[\dfrac{1}{E(u)}\right]\right)\footnote{This follows e.g. from the equality \eqref{functions open disk inverse limit} and Lemma \ref{ring theoretic facts} below.}.
\end{displaymath}
\end{proof}
\subsection{Filtered isocrystals and \texorpdfstring{$\varphi$}{phi}-modules on the open unit disk}\label{construction kisin 06}
In this section, we explain the construction of a natural fully faithful functor 
\begin{displaymath}
\mathrm{MF}^{\varphi_q}(K)\hookrightarrow \mathrm{Vect}^{\varphi_q}(\O).
\end{displaymath}
In fact, inspired by \cite{LafforgueGenestier} and \cite{kim}, we can do slightly better, namely we will construct a commutative diagram
\begin{displaymath}
    \begin{tikzcd}
\mathrm{MF}^{\varphi}(K)\ar[d,hook]\ar[r,hook] & \mathrm{Vect}^{\varphi_q}(\O)\\
\mathrm{HP}^{\varphi_q}(K) \ar[ur,"\simeq",swap], &
    \end{tikzcd}
\end{displaymath}
where $\mathrm{HP}^{\varphi_q}(K)$ denotes the category of $\varphi_q$-modules with Hodge--Pink structure over $K$, whose definition is recalled next.
\begin{defn}
A $\varphi_q$-module with Hodge--Pink structure (or simply a Hodge--Pink isocrystal) over $K$ is a triple $(D,\varphi_q,\Lambda)$ where $(D,\varphi_q)$ is a $\varphi_q$-module over $K_{0,E}$, and $\Lambda$ is a $\widehat{\fkS}_0$-lattice in $D\otimes_{K_{0,E}} \mathrm{Fr}(\widehat{\fkS}_0)$.
\end{defn}
The two categories $\mf(K)$ and $\hp(K)$ are related in the following way. First, given a filtration $\mathrm{Fil}^\bullet D_K$, we get an associated Hodge--Pink structure using the lattice $\Lambda:=\mathrm{Fil}^0(D_K\otimes_K \mathrm{Fr}(\widehat{\fkS}_0))$. Conversely, given a Hodge--Pink lattice $\Lambda$, one gets a filtration on on $D_K$ by first  restricting the $E(u)$-adic filtration $\{E(u)^i \Lambda\}_{i\in\mathbf{Z}}$ on $D\otimes_{K_{0,E}}\mathrm{Fr}(\widehat{\fkS}_0)$ to the tautological lattice $\widehat{D}_0:=D\otimes_{K_{0,E}}\widehat{\fkS}_0$, and then a filtration on $D_K$ by pushing forward along the natural map 
\begin{displaymath}
    \tilde{\theta}: D\otimes_{K_{0,E}}\widehat{\fkS}_0\twoheadrightarrow D\otimes_{K_{0,E}}K=D_K.
\end{displaymath}
\begin{lem}
The resulting functors 
\begin{displaymath}
\begin{tikzcd}
\mf(K)\ar[r,shift left, "P"] & \hp(K)\ar[l,shift left,"F"].
\end{tikzcd}
\end{displaymath}
satisfy $F\circ P\simeq \mathrm{id}$. In particular, $P$ is fully faithful.
\end{lem}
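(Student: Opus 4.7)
The plan is to first verify $F \circ P \simeq \mathrm{id}_{\mf(K)}$ by an explicit computation, then deduce fully faithfulness of $P$ by a purely formal argument. Unpacking the construction of $P$, for $X = (D, \varphi_q, \mathrm{Fil}^\bullet D_K) \in \mf(K)$ the Hodge--Pink lattice is concretely
\begin{displaymath}
\Lambda \;=\; \sum_{i \in \mathbf{Z}} E(u)^{-i} \bigl(\mathrm{Fil}^i D_K \otimes_K \widehat{\fkS}_0\bigr) \;\subseteq\; D_K \otimes_K \mathrm{Fr}(\widehat{\fkS}_0),
\end{displaymath}
the sum being effectively finite by exhaustiveness and separatedness of the filtration. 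Since both $P$ and $F$ act as the identity on underlying $\varphi_q$-modules, the sought natural isomorphism $F \circ P \simeq \mathrm{id}$ reduces to the assertion that $F(\Lambda)$ coincides with $\mathrm{Fil}^\bullet D_K$ on $D_K$.

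To check this equality, I would choose a splitting $D_K = \bigoplus_j V_j$ of $\mathrm{Fil}^\bullet$ with $\mathrm{Fil}^i D_K = \bigoplus_{j \geq i} V_j$; such a splitting exists since $D_K$ is a finite-dimensional $K$-vector space. In these coordinates one computes $\Lambda = \bigoplus_j E(u)^{-j}(V_j \otimes_K \widehat{\fkS}_0)$ and $\widehat{D}_0 = \bigoplus_j V_j \otimes_K \widehat{\fkS}_0$, so a summand-by-summand intersection yields
\begin{displaymath}
E(u)^n \Lambda \cap \widehat{D}_0 \;=\; \bigoplus_{j \leq n} E(u)^{n-j}\bigl(V_j \otimes_K \widehat{\fkS}_0\bigr) \;\oplus\; \bigoplus_{j > n}\bigl(V_j \otimes_K \widehat{\fkS}_0\bigr).
\end{displaymath}
Applying $\tilde\theta$ annihilates every summand of positive exponent $n - j > 0$ and identifies $V_j \otimes_K \widehat{\fkS}_0$ with $V_j$, leaving $F(P(X))^n D_K = \bigoplus_{j \geq n} V_j = \mathrm{Fil}^n D_K$ as desired. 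Functoriality of the resulting identification in $X$ is automatic since the map is literally the identity on underlying $\varphi_q$-modules.

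For fully faithfulness, I would argue as follows. Both $P$ and $F$ preserve the underlying $\varphi_q$-module data, so for any $X, Y \in \mf(K)$ the Hom-sets $\mathrm{Hom}_{\mf(K)}(X, Y)$ and $\mathrm{Hom}_{\hp(K)}(P(X), P(Y))$ both sit as subsets of the common set of $\varphi_q$-equivariant $K_{0,E}$-linear maps $D \to D'$, with $P$ realized as the canonical inclusion into this ambient set; faithfulness is immediate. For fullness, given $f \in \mathrm{Hom}_{\hp(K)}(P(X), P(Y))$, functoriality of $F$ places $f$ in $\mathrm{Hom}_{\mf(K)}(F(P(X)), F(P(Y)))$, which by the first step equals $\mathrm{Hom}_{\mf(K)}(X, Y)$, and then $P(f)$ recovers $f$. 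I do not anticipate any serious obstacle: the only real computational content is the per-summand verification above, while the formal step exploiting the retraction is routine once $F \circ P \simeq \mathrm{id}$ is established.
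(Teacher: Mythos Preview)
Your proof is correct and follows essentially the same approach as the paper: both arguments choose a splitting $D_K = \bigoplus_j V_j$ of the filtration, compute the intersection $E(u)^i\Lambda \cap \widehat{D}_0$ summand by summand, and then reduce modulo $E(u)$; the deduction of full faithfulness from $F\circ P\simeq \mathrm{id}$ together with faithfulness of $F$ (both functors being the identity on underlying isocrystals) is likewise the same formal argument, only spelled out in more detail in your version.
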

\begin{proof}
Given a filtered isocrystal $(D,\varphi,\mathrm{Fil}^\bullet D_K)$, we need to show that $\tilde{\theta}(\widehat{D}_0\cap E(u)^i\Lambda)=\mathrm{Fil}^i D_K$ for each $i\in\mathbf{Z}$, where $\Lambda:=\mathrm{Fil}^0(D_K\otimes_K \mathrm{Fr}(\widehat{\fkS}_0))$. By shifting, we may assume $i=0$. This desired equality then follows e.g. by choosing a splitting of the given filtration: 
\begin{displaymath}
    D_K =\bigoplus_{j\in\mathbf{Z}}V^j,\quad\mathrm{Fil}^i D_K =\bigoplus_{j\geq i}V^j.
\end{displaymath}
The second statement follows formally from the first and the fact that $F$ is faithful (as it does nothing on the underlying isocrystals). 
\end{proof}
\subsubsection{Constructions}
Thus, combining with Lemma \ref{frobenius trick}, it remains to construct mutually inverse equivalences 
\begin{displaymath}
    \begin{tikzcd}
\hp(K)\ar[r,shift left, "\M"] & \vect(\fkS_E\langle I/\pi\rangle[1/\pi])\ar[l,shift left,"D"].
\end{tikzcd}
\end{displaymath}
For $\M$, we will follow the presentation of \cite[Construction 6.5]{prismatic}, which uses Beauville--Laszlo glueing; one can check that this agrees with Kisin's rather more concrete construction in \cite{Kis06} (when $E=\mathbf{Q}_p$); see Remark \ref{compare with Kisin}. The idea is to define $\M(D)$ as a modification of the constant $\varphi_q$-module $\M':=D\otimes_{K_{0,E}}\fkS_E\langle I/\pi\rangle[1/\pi]$ at the Cartier divisor $\{I=0\}$ (where $I:=(E(u))$) using the given Hodge--Pink lattice $\Lambda$. More precisely, the underlying module of $\M(D)$ is obtained by applying Beauville--Laszlo glueing to the vector bundles
\begin{itemize}
    \item $\M'[1/I]\in \mathrm{Vect}(\fkS_E\langle I/\pi\rangle[1/\pi][1/I])$ 
    \item  $\Lambda\in \mathrm{Vect}(\fkS_I\langle I/\pi\rangle[1/\pi]^{\wedge}_I)$
\end{itemize}
along the obvious isomorphism; here we implicitly identify $\widehat{\fkS}_0$ with $\fkS_E\langle I/\pi\rangle[1/\pi]^{\wedge}_I$ via the natural map (see Lemma \ref{ring theoretic facts} (3) for a more general statement). The $\varphi_q$-structure on $\M(D)$ is then defined as the composition 
\begin{align*}
(\varphi_q^*\M(D))[1/I]\simeq (\varphi_q^*\M')[1/I]\overset{\varphi_{\M'}}{\simeq} \M'[1/I]\simeq \M(D)[1/I];   
\end{align*}
here for the first identification, we use that $\varphi_q(I)$ is a unit in $\fkS_E\langle I/\pi\rangle [1/\pi]$.  

Next, we define the functor $D$. Given $\M\in \vect(\fkS_E\langle I/\pi\rangle [1/\pi])$, set $D(\M):=\M\otimes_{\fkS_E\langle I/\pi\rangle [1/\pi]}K_{0,E}$, equipped with the natural (diagonal) $\varphi_q$-action; here $\fkS_E\langle I/\pi\rangle\twoheadrightarrow K_{0,E}$ is the natural ($\varphi_q$-equivariant) map $u\mapsto 0$. This gives the isocrystal structure on $D(\M)$; it remains to define the Hodge--Pink lattice. First, the standard Frobenius trick shows that there is a unique $\varphi$-equivariant map 
\begin{displaymath}
    \xi: D(\M)\otimes_{K_{0,E}}\fkS_E\langle I/\pi\rangle [1/\pi][1/I]\to \M[1/I]
\end{displaymath}
lifting the identity modulo $u$. See \cite[Lemma 1.2.6]{Kis06} or \cite[Lemma 3.5]{LafforgueGenestier}. In particular, $\xi$ realizes $\M$ as a modification of $D(\M)\otimes_{K_{0,E}}\fkS_E\langle I/\pi\rangle [1/\pi]$ at the divisor $\{I=0\}$, and hence $\M^{\wedge}_{I}$ gives rise to the desired lattice inside $\fM^{\wedge}_I[1/I]\simeq D\otimes_{K_{0,E}}\mathrm{Fr}(\widehat{\fkS}_0)$. While this already finishes the construction of the functor $\M\mapsto D(\M)$, we note that the associated filtration on $D(\M)_K$ can be maded slightly more explicit as follows. Indeed, as $\varphi(I)$ is a unit in $\fkS_E\langle I/\pi\rangle [1/\pi]$ (as was already mentioned), $\varphi_q^*\xi$ is an isomorphism, fitting in following commutative square
\begin{displaymath}
\begin{tikzcd}
    \varphi_q^* D(\M)\otimes_{K_{0,E}} \fkS_E\langle I/\pi\rangle [1/\pi]\ar[d,hook,"\varphi_{D(\M)}",swap]\ar[r,"\varphi_q^*\xi", "\simeq"'] & \varphi_q^*\M \ar[d,hook,"\varphi_{\M}"]\\
    D(\M)\otimes_{K_{0,E}} \fkS_E\langle I/\pi\rangle [1/\pi][1/I] \ar[r,"{\xi}"', "\simeq"] & \M[1/I].
\end{tikzcd}
\end{displaymath}
In particular, we see that $D(\M)\otimes_{K_{0,E}} \widehat{\fkS}_0\simeq \varphi_q^*D(\M)\otimes_{K_{0,E}} \widehat{\fkS}_0\simeq (\varphi_q^*\M)^{\wedge}_I$. The filtration on $D(\M)_K=D(\M)\otimes_{K_{0,E}} K$ is simply the image of the filtration $\mathrm{Fil}^\bullet (\varphi_q^*\M):=\varphi_{\M}^{-1}(I^\bullet\M)$ on $\varphi_q^*\M$. This agrees with the filtration constructed in \cite[\textsection 1.2.7]{Kis06} (when $E=\mathbf{Q}_p$).
\begin{theorem}\label{mutual inverse phi-module hodgepink}
The functors 
\begin{displaymath}
    \begin{tikzcd}
\hp(K)\ar[r,shift left, "\M(\cdot)"] & \vect(\fkS_E\langle I/\pi\rangle[1/\pi])\ar[l,shift left,"D(\cdot)"].
\end{tikzcd}
\end{displaymath}
are mutually inverse equivalences of categories. 
\end{theorem}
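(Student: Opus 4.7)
The plan is to view both functors through the lens of Beauville--Laszlo glueing along the Cartier divisor $\{I=0\}$, which recovers a $\varphi_q$-equivariant vector bundle $\M$ on $\fkS_E\langle I/\pi\rangle [1/\pi]$ from the pair $(\M[1/I],\M^{\wedge}_I)$ together with the canonical identification over $(\fkS_E\langle I/\pi\rangle [1/\pi])^{\wedge}_I[1/I]$. Under the identification $\widehat{\fkS}_0\simeq (\fkS_E\langle I/\pi\rangle [1/\pi])^{\wedge}_I$ implicit in both constructions, the two functors have been engineered so that a Hodge--Pink datum $(D,\Lambda)$ corresponds exactly to the glueing datum consisting of the trivial bundle $D\otimes_{K_{0,E}}\fkS_E\langle I/\pi\rangle [1/\pi][1/I]$ together with the lattice $\Lambda$. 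Granting this, the verification of both compositions reduces to essentially formal identifications.

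For the composition $D\circ \M\simeq \mathrm{id}$, I would first observe that the reduction map $\fkS_E\langle I/\pi\rangle [1/\pi]\twoheadrightarrow K_{0,E}$ sending $u\mapsto 0$ inverts $I$, since $E(0)\in K_{0,E}^\times$; hence the computation of $D(\M(D))$ factors through $\M(D)[1/I]=D\otimes_{K_{0,E}}\fkS_E\langle I/\pi\rangle [1/\pi][1/I]$, recovering $D$ with the correct Frobenius structure (the glueing isomorphism away from $\{I=0\}$ being the identity). For the Hodge--Pink part, $\M(D)^{\wedge}_I$ is tautologically $\Lambda$ by the very definition of the glueing producing $\M(D)$, so the lattice reconstructed by $D(\cdot)$ agrees with $\Lambda$.

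For the composition $\M\circ D\simeq \mathrm{id}$, the main input is that the Frobenius-trick map $\xi: D(\M)\otimes_{K_{0,E}}\fkS_E\langle I/\pi\rangle [1/\pi][1/I]\to \M[1/I]$ is an isomorphism; I plan to establish this by directly adapting \cite[Lemma 1.2.6]{Kis06} (or \cite[Lemma 3.5]{LafforgueGenestier}) to the $\O_E$-linear setting, where no essentially new input arises because $K_{0,E}/E$ is unramified and what is really used is the contracting property of $\varphi_q$ modulo $u$. Granting $\xi$, the Hodge--Pink lattice assigned to $D(\M)$ is the image of $\M^{\wedge}_I$ under the identification $D(\M)\otimes_{K_{0,E}}\widehat{\fkS}_0 \simeq (\varphi_q^*\M)^{\wedge}_I\simeq \M^{\wedge}_I$ (the last step invoking $\xi$ together with the fact that $\varphi_q(I)$ is a unit in $\fkS_E\langle I/\pi\rangle [1/\pi]$). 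Thus the Beauville--Laszlo data attached to $(D(\M),\M^{\wedge}_I)$ reproduce those of $\M$, giving $\M(D(\M))\simeq \M$, while the compatibility of $\varphi_q$-structures is forced by the commutative square appearing in the construction of $D(\cdot)$.

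The chief technical step, beyond formal bookkeeping, is thus the verification that $\xi$ is a $\varphi_q$-equivariant isomorphism in the $\O_E$-linear context; once this is in hand, all remaining identifications follow by unwinding the Beauville--Laszlo glueing.
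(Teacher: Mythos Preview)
Your proposal is correct and follows essentially the same approach as the paper: both reduce the verification to unwinding the Beauville--Laszlo glueing data, using that $\M(D)^{\wedge}_I=\Lambda$ tautologically and that the Frobenius-trick map $\xi$ (already established in the construction of $D(\cdot)$ via \cite[Lemma 1.2.6]{Kis06} or \cite[Lemma 3.5]{LafforgueGenestier}) is the unique $\varphi_q$-equivariant isomorphism lifting the identity modulo $u$. The paper only spells out $\M\circ D\simeq\mathrm{id}$ and leaves the other direction to the reader, whereas you sketch both; but the content is the same.
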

\begin{proof}
This follows readily from the construction of the functors. Here we will only explain the proof that $\M\circ D\simeq \mathrm{id}$; that $D\circ \M\simeq \mathrm{id}$ can be proved similarly. Fix $\M\in\mathrm{Vect}^{\varphi}(\fkS_E\langle I/\pi\rangle [1/\pi])$. As $\M^{\wedge}_I=\Lambda_{D(\M)}$ by construction of $D(\M)$, we see that $\M(D(\M))\simeq \M$ on underlying modules. To compare the $\varphi$-structures, recall that by construction of $\M(\cdot)$, $\varphi_{\M(D(\M))}$ is the unique map $\varphi_q^*\M(D(\M))\to \M(D(\M))[1/I]$ making the diagram  
\begin{displaymath}
\begin{tikzcd}
    \varphi_q^* D(\M)\otimes_{K_{0,E}} \fkS_E\langle I/\pi\rangle [1/\pi]\ar[d,hook,"\varphi_{D(\M)}",swap] \ar[r,"\simeq"] & \varphi_q^*\M(D(\M)) \ar[d,hook,dashed,"\varphi_{\M(D(\M))}"]\\
    D(\M)\otimes_{K_{0,E}} \fkS_E\langle I/\pi\rangle [1/\pi][1/I] \ar[r,"{\xi}"',"\simeq"] & \M(D(\M))[1/I].
\end{tikzcd}
\end{displaymath}
commute, but $\varphi_{\M}$ is also such a map (by $\varphi$-equivariance of $\xi$, as we explained above), so they coincide, as wanted.
\end{proof}
\begin{remark}[Comparison with \cite{Kis06}]\label{compare with Kisin}
In this remark, we briefly check that the compositions $\M\circ P$ and $F\circ D$ agree with the ones from \cite{Kis06} (in case $E=\mathbf{Q}_p$), up to composing with the natural equivalence from Lemma \ref{frobenius trick}. Denote the latters by $\M'$ and $D'$. That $D'\simeq F\circ D$ is already explained in the paragraph above Theorem \ref{mutual inverse phi-module hodgepink}. We now show that $\M'\simeq \M\circ P$. Fix $D:=(D,\varphi,\mathrm{Fil}^\bullet D_K)\in \mf(K)$. Note firstly that the ring $\widehat{\fkS}_n$ from \cite[\textsection 1.1.1]{Kis06} is not our $\widehat{\fkS}_{n}$, rather it is $\varphi_W^{-n}(\widehat{\fkS}_n)$, where $\varphi_W: \O\to \O$ is the automorphism given by Frobenius on $W(k)$ (and $u\mapsto u$). Using this observation, it is easy to rewrite the definition of $\M'(D)$ in \cite[\textsection 1.2]{Kis06} as 
\begin{displaymath}\label{kisin description}
    \M'(D):=\{x\in D\otimes_{K_0}\O[1/\lambda]\;|\;\iota_n(x)\in \mathrm{Fil}^0(D_K\otimes_K \mathrm{Fr}(\widehat{\fkS}_n))\;\text{for all $n\geq 0$}\},
\end{displaymath}
where $\iota_n$ is the natural map $D\otimes_{K_0}\O[1/\lambda]\to D\otimes_{K_0}\mathrm{Fr}(\widehat{\fkS}_n)$ (which indeed makes sense as $\lambda$ has a simple root at each $x_n$). Then $\M'(D)\subseteq D\otimes_{K_0}[1/\lambda]$ is a finite free sub-$\O$-module with $\M'(D)[1/\lambda]=D\otimes_{K_0}\O[1/\lambda]$; moreover the isomorphism $\varphi_D: \varphi^*D\otimes_{K_0}\O[1/\lambda]\simeq D\otimes_{K_0}\O[1/\lambda]$ restricts to an isomorphism $(\varphi^*\M'(D))[1/E(u)]\simeq \M'(D)[1/E(u)]$, making $\M'(D)$ an object in $\mathrm{Vect}^{\varphi}(\O)$. See \cite[Lemma 1.2.2]{Kis06}. In particular, by base change along the natural map $\O[1/\lambda]\to \fkS_E\langle I/p\rangle [1/p][1/I]$ (which makes sense as $\varphi^n(I)$ is invertible in $\fkS_E\langle I/p\rangle [1/p]$ for all $n\geq 1$), we obtain 
\begin{displaymath}
    (\M'(D)\otimes_{\O} \fkS_E\langle I/p\rangle [1/p])[1/I]\simeq D\otimes_{K_0}\fkS_E\langle I/p\rangle [1/p][1/I]. 
\end{displaymath}
Moreover, the description \ref{kisin description} also shows that $\M'(D)^{\wedge}_I$ identifies with $\mathrm{Fil}^0(D_K\otimes_K \mathrm{Fr}(\widehat{\fkS}_0))=:\Lambda_{F(D)}$ (e.g. by applying $\varphi_W^n$ to \cite[Lemma 1.2.1 (2)]{Kis06}). This shows that 
\begin{displaymath}
    \M'(D)\otimes_{\O}\fkS_E\langle I/p\rangle [1/p]\simeq \M(F(D))\;\text{in $\mathrm{Vect}^{\varphi}(\O)$},
\end{displaymath}
as claimed. 
\end{remark}
\subsection{Slopes and weak admissibility}
Similarly to \cite{Kis06} and \cite{kim}, in this subsection we relate, following Berger's oberservation \cite{berger08}, weakly admissbility of Hodge--Pink isocrystals, and the ``pure of slope $0$'' condition for $\varphi$-modules on the open unit disk. As many of the arguments are identical to those in \cite{Kis06}, we often sketch only the proofs.

Recall firstly the notion of weak admissibility for Hodge--Pink isocrystals. Let $D:=(D,\varphi,\Lambda)\in \hp(K)$. The Newton number $t_N$ of $D$ is defined exactly as before (i.e. using the underlying isocrystal). For defining the Hodge number, again by passing to the determinant, we may assume $D$ is 1-dimensional; in this case we set $t_H(D):=h$, where $h$ is the unique integer such that $\Lambda=(E(u))^{-h}(D\otimes_{K_{0,E}}\widehat{\fkS}_0)$ (so $t_H(D)$ only depends on the Hodge--Pink structure of $D$).
\begin{defn}
A Hodge--Pink $\varphi_q$-module $D=(D,\varphi_q,\Lambda)$ is called weakly admissibile if $t_H(D)=t_N(D)$ and $t_H(D')\leq t_N(D')$ for all subojects $D'\subseteq D$ in $\hp(K)$\footnote{The Hodge--Pink lattice on a subobject $D'\subseteq D$ is by definition given by $\Lambda_{D'}:=\Lambda_D\cap (D'\otimes_{K_{0,E}}\mathrm{Fr}(\widehat{\fkS}_0))$ (just as a subobject in $\mf(K)$ is endowed with the subspace fitration). However, the notion of weak admissibility does not change if we weaken this into the weaker condition that $\Lambda_{D'}\subseteq \Lambda_D$.}.
\end{defn}
\begin{lem}\label{weak admissibility preserve}
The (fully faithful) functor
\begin{displaymath}
\begin{tikzcd}
\mf(K)\ar[r,hook, "P"] & \hp(K)
\end{tikzcd}
\end{displaymath}
preserves weak admissibility. More precisely, an object $D\in \mf(K)$ is weakly admissible if and only if its image $P(D)$ is. 
\end{lem}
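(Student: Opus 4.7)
The plan is to show that $P$ is compatible with the formation of subobjects in a strong sense, from which the lemma follows immediately since the Newton and Hodge numbers will then match on every sub.

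First note that $P$ preserves the underlying $\varphi_q$-module, so on any $\varphi_q$-submodule $D'\subseteq D$ the Newton number $t_N(D')$ is computed identically on the $\mf(K)$ and the $\hp(K)$ side. Thus everything reduces to showing that for any sub-$\varphi_q$-module $D'\subseteq D$, the Hodge-Pink lattice
$$\Lambda_{P(D)}\cap (D'\otimes_{K_{0,E}}\mathrm{Fr}(\widehat{\fkS}_0))$$
obtained from the sub-structure in $\hp(K)$ coincides with $\Lambda_{P(D'_{\mf})}=\mathrm{Fil}^0(D'_K\otimes_K\mathrm{Fr}(\widehat{\fkS}_0))$ computed from the subspace filtration on $D'_K$. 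In particular, taking $D'=D$ gives $t_H(P(D))=t_H(D)$, and the statement for all sub-$\varphi_q$-modules will yield $t_H(D'_{\mf})=t_H(D'_{\hp})$ in general.

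The key input is a compatible splitting lemma: there exists a $K$-linear splitting $D_K=\bigoplus_j V^j$ of $\mathrm{Fil}^\bullet D_K$ (with $\mathrm{Fil}^i D_K=\bigoplus_{j\geq i}V^j$) which restricts to a splitting $D'_K=\bigoplus_j V'^j$ of the subspace filtration, where $V'^j=V^j\cap D'_K$. I would build this by first choosing a splitting $V'^j$ of $\mathrm{Fil}^\bullet D'_K$ and then extending each $V'^j\subseteq \mathrm{Fil}^j D_K$ to a complement $V^j$ of $\mathrm{Fil}^{j+1}D_K$ inside $\mathrm{Fil}^j D_K$; a direct check shows $V^j\cap D'_K=V'^j$. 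With such a compatible splitting in hand, both lattices decompose explicitly as $\bigoplus_j V'^j\otimes_K E(u)^{-j}\widehat{\fkS}_0$, hence are equal. This is the main (and really only) technical step.

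Given the key identity, both directions of the lemma are now formal. If $D$ is weakly admissible in $\mf(K)$, then for any sub-$\hp$ object $D'\subseteq P(D)$ (whose lattice by definition equals $\Lambda_{P(D)}\cap (D'\otimes\mathrm{Fr}(\widehat{\fkS}_0))$), the identity gives $t_H(D')=t_H(D'_{\mf})\leq t_N(D'_{\mf})=t_N(D')$, and $t_H(P(D))=t_H(D)=t_N(D)=t_N(P(D))$. Conversely, if $P(D)$ is weakly admissible, then for any $\varphi_q$-submodule $D'\subseteq D$, applying $P$ to $D'_{\mf}$ produces a sub-$\hp$ object of $P(D)$ with the same Newton and Hodge numbers, so the inequality $t_H(D'_{\mf})\leq t_N(D'_{\mf})$ and the equality $t_H(D)=t_N(D)$ follow. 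The main obstacle, as indicated, is the compatible splitting, but it is purely linear-algebraic; no serious $p$-adic input is needed beyond the fact that $\widehat{\fkS}_0$ is a DVR with uniformizer $E(u)$ and coefficient field $K$.
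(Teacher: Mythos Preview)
Your argument is correct, but it takes a different route from the paper's. The paper never proves that $P$ sends strict subobjects to strict subobjects. Instead it observes that both $P$ and its one-sided inverse $F$ preserve $t_N$ (trivially) and $t_H$ (reduction to rank $1$, where it is a one-line computation), and that each functor sends subobjects to subobjects in the \emph{relaxed} sense of the footnote (i.e.\ $\Lambda_{D'}\subseteq\Lambda_D$, resp.\ $\mathrm{Fil}^\bullet D'_K\subseteq\mathrm{Fil}^\bullet D_K|_{D'_K}$). This already gives ``target weakly admissible $\Rightarrow$ source weakly admissible'' for both $P$ and $F$; the forward direction for $P$ then falls out of $F\circ P\simeq\mathrm{id}$.

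Your approach avoids $F$ entirely and proves something strictly stronger along the way, namely that $P$ is compatible with strict subobjects: $\Lambda_{P(D)}\cap(D'_K\otimes_K\mathrm{Fr}(\widehat{\fkS}_0))=\Lambda_{P(D'_{\mf})}$. The compatible-splitting lemma you sketch is fine (and the verification that $V^j\cap D'_K=V'^j$ goes through exactly as you indicate). Once both lattices are written as $\bigoplus_j V'^j\otimes_K E(u)^{-j}\widehat{\fkS}_0$, the identity and the equality $t_H(P(D'_{\mf}))=t_H(D'_{\mf})$ are immediate. The trade-off: the paper's proof is shorter and more formal, while yours is more hands-on and yields the extra statement that $P$ takes the subobject lattice in $\mf(K)$ exactly to the subobject lattice in $\hp(K)$, not merely a sub-lattice of it.
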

\begin{proof}
First, $F$ and $P$ both preserve the Newton numbers $t_N$ as they do nothing on the underlying isocrystals. They also preserve the Hodge numbers: for this we may reduce to the rank $1$ case, where the result follows by a direct computation. It follows that an object $D\in \mf(K)$ (resp. 
$D'\in \hp(K)$) is weakly admissible if $P(D)$ (resp. $F(D')$) is so. Moreover, as $F\circ P\simeq \mathrm{id}$, it then follows that $P(D)$ is weakly admissible whenever $D$ is so.  
\end{proof}
\begin{remark}
It is however \textit{not} true that $F$ preserves weakly admissible objects. The following example is taken from \cite{LafforgueGenestier}. Consider the object $D=(D,\varphi,\Lambda)\in \hp(K)$ with $D=K_0 e_1\oplus K_0 e_2, \varphi(e_i)=e_i$, and Hodge--Pink lattice
\begin{displaymath}
    \Lambda:=E(u)^{-1}\widehat{\fkS}_0e_1\oplus \widehat{\fkS}_0 e_2.
\end{displaymath}
One can check directly that $(D,\varphi,\Lambda)$ is weakly admissible. On the other hand, the associated filtered isocrystal is given by
\begin{displaymath}
    \mathrm{Fil}^0 D_K=D_K,\quad\mathrm{Fil}^1 D_K=Ke_1, \quad\mathrm{Fil}^2 D_K=0,
\end{displaymath}
which is not weakly admissible as the submodule $D':=K_0 e_1$ has $t_N(D',\varphi)=0$ but $t_H(\mathrm{Fil}^\bullet D'_K)=1$. In particular, we see that $F$ and $P$ are not equivalences of categories (though they are on rank $1$ objects). 
\end{remark}
\subsubsection{Kedlaya's slope filtration}
Let 
\begin{displaymath}
    \R:=\varinjlim_{r\mapsto 1^{-}}\O_{(r,1)}
\end{displaymath}
be the Robba ring over $K_{0,E}$; here $\O_{(r,1)}$ denotes the ring of rigid analytic functions on the open annulus $\{r<|u|<1\}$. By work of Lazard, $\R$ is a B\' ezout domain containing $\O$ as a subring. Again, there is a natural Frobenius map $\varphi_q: \R\to \R$ extending $\varphi_q$ on $\O$. Inside $\R$, there is the subring $\R^b$ formed of functions which are bounded. This is a Henselian discrete valued field with uniformizer $\pi$, and ring of integers
\begin{displaymath}
    \R^{int}=\{\sum_{n\in\mathbf{Z}} a_n u^n\in \R\;|\;u_n\in W_{\O_E}(k)\;\text{for all $n\in\mathbf{Z}$}\},
\end{displaymath}
cf. \cite[\textsection 7.2]{DurhamFargues}. In particular, the $p$-adic completion of $R^{int}$ identifies with $\fkS_E[1/E(u)]^{\wedge}_p=:\O_{\E}$. Clearly, both $\R^b$ and $\R^{int}$ are $\varphi_q$-stable inside $\R$.

We denote by $\mathrm{Vect}^{\varphi_q}(\R)$ the category of $\varphi$-modules over $\R$, i.e. finite free $\R$-modules $\M$ equipped with an isomorphism $\varphi_q^*\M\simeq \M$. A $\varphi$-module $\M$ is called \' etale or pure of slope $0$ if it contains a $\varphi_q$-stable $\R^{int}$-lattice $\N$ for which the map $\varphi_q^*\N\to \N$ is an isomorphism. By twisting suitably with a rank $1$ module, one can then define the subcategory $\mathrm{Vect}^{\varphi_q,s}(\R)$ of objects pure of slope $s$ for any $s\in\mathbf{Q}$; see \cite[Definition 1.6.1]{kedlaya3relative}. Similarly, we denote by $\mathrm{Vect}^{\varphi_q,s}(\R^b)$ the subcategory of $\varphi_q$-modules over $\R^b$ which are pure of slope $s$ in the sense of Dieudonn\' e--Manin theory (recall that $\R^b$ is a discrete valued field). Finally, a $\varphi_q$-module $\M$ over $\O$ is called pure of slope $0$ if $\M\otimes_{\O}\R$ is.
\begin{thm}\label{kedlaya theorem}
(1) Base change defines an equivalence of categories 
\begin{displaymath}
    \mathrm{Vect}^{\varphi_q,s}(\R^b)\simeq \mathrm{Vect}^{\varphi_q,s}(\R).
\end{displaymath}
(2) For any $\M\in \mathrm{Vect}^{\varphi_q}(\R)$, there exists a unique filtration 
\begin{displaymath}
    0=\M_0\subset \M_1\subset\ldots\subset \M_{r}=\M,
\end{displaymath}
in $\mathrm{Vect}^{\varphi_q}(\R)$, called the slope filtration, such that the quotient $\M_i/\M_{i-1}$ is (finite free and) pure of slope $s_i\in\mathbf{Q}$ and $s_1<s_2<\ldots<s_r$.
\end{thm}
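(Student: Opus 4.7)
The plan is to deduce the statement from Kedlaya's slope filtration theorem for $\varphi$-modules over the Robba ring. In Kedlaya's original formulation the Frobenius is a lift of the absolute $p$-Frobenius, but the $q$-Frobenius setting here differs only cosmetically: writing $\varphi_q = \varphi_p^f$ with $f = [E_0:\mathbf{Q}_p]$, a $\varphi_q$-module over $\R$ is the same as a $\varphi_p$-module equipped with an $E_0$-linear descent datum on the underlying module, and the notions of Newton slope, pure-of-slope, and \'etale lattice correspond under this translation up to a global rescaling of slopes by $f$. Alternatively, one can rerun Kedlaya's arguments verbatim with $\varphi_p$ replaced by $\varphi_q$, since the key analytic inputs (Newton polygons over $\R^b$, the contracting behavior of Frobenius on rational subdomains of the annulus, and Dieudonn\'e--Manin over $\O_{\E}$) only depend on the contraction exponent of the chosen Frobenius lift, not on whether it agrees with a specific power of $\varphi_p$.

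For part (1), faithfulness and fullness of the base change functor are formal. For essential surjectivity, after twisting by an appropriate basic bundle one may reduce to the case $s=0$; then by definition any such $\M$ contains a $\varphi_q$-stable $\R^{int}$-lattice $\N$ on which $\varphi_q$ is an isomorphism, and $\N[1/\pi]$ is the desired descent to $\R^b = \R^{int}[1/\pi]$. Uniqueness of the descent up to isomorphism is a consequence of Dieudonn\'e--Manin theory applied to the $\pi$-adic completions, which are $\varphi_q$-modules over $\O_{\E}$.

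For part (2), one constructs the filtration inductively by extracting a saturated $\varphi_q$-stable submodule $\M_1 \subseteq \M$ of minimal slope $s_1$, then applying the same procedure to $\M/\M_1$. The base change $\M \otimes_\R \R^b$ admits a Dieudonn\'e--Manin decomposition into isoclinic summands (possibly after an unramified coefficient extension), and the minimal-slope summand must be lifted back from $\R^b$ to a sub-$\varphi_q$-module over $\R$; uniqueness of the resulting filtration then follows from the standard vanishing $\Hom^{\varphi_q}(\O(s), \O(s')) = 0$ for $s > s'$ together with a Harder--Narasimhan argument.

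The main obstacle, and the technical heart of Kedlaya's theorem, is precisely this lifting step from $\R^b$ to $\R$ in part (2): it requires delicate analytic control on the radius of convergence of candidate sub-$\varphi_q$-modules, as a priori a slope-minimal direct summand over the bounded Robba ring need not extend to a coherent sub-bundle on the full open annulus. Since the result is available in the literature in sufficient generality (see \cite{kedlaya3relative}), we invoke it as a black box, the passage between $p$- and $q$-Frobenius being routine.
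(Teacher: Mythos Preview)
Your proposal is correct and matches the paper's approach: the paper's proof consists solely of the citation ``See \cite[Theorem 1.6.5]{kedlaya3relative} and \cite[Theorem 1.7.1]{kedlaya3relative}'', and you likewise ultimately invoke Kedlaya's result from \cite{kedlaya3relative} as a black box after sketching the shape of the argument. Your additional commentary on the passage from $\varphi_p$ to $\varphi_q$ is a useful elaboration (and indeed Kedlaya's relative slope filtration paper already allows general Frobenius lifts, so no real adaptation is needed), but note that your claim that ``faithfulness and fullness of the base change functor are formal'' in part (1) is a bit glib---fullness requires knowing that a morphism over $\R$ of objects descended from $\R^b$ itself descends, which is part of the content of Kedlaya's theorem rather than a formality.
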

\begin{proof}
    See \cite[Theorem 1.6.5]{kedlaya3relative} and \cite[Theorem 1.7.1]{kedlaya3relative}.
\end{proof}
\begin{prop}\label{slope filtration descend}
Let $\M\in \mathrm{Vect}^{\varphi_q}(\O)$. Then the slope filtration on $\M\otimes_{\O}\R$ descends uniquely to a filtration on $\M$ by (saturated\footnote{A submodule $\N\subseteq \M$ is called saturated if it is a direct summand of $\M$.}) subobjects in $\vect(\O)$.
\end{prop}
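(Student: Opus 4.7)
The plan is to define, for each term $\N_i$ of Kedlaya's slope filtration on $\M \otimes_\O \R$ (Theorem \eqref{kedlaya theorem} (2)), the candidate submodule $\M_i := \M \cap \N_i$ with the intersection taken inside $\M \otimes_\O \R$. Such an $\M_i$ is automatically $\varphi_q$-stable (both $\M$ and $\N_i$ are), and the quotient $\M/\M_i$ is $\O$-torsion-free since it embeds into the finite free $\R$-module $(\M \otimes_\O \R)/\N_i$. Uniqueness is essentially formal from this construction: if $\M_i'$ is any other saturated $\O$-subobject of $\M$ whose base change to $\R$ is $\N_i$, then $\M_i' \subseteq \M \cap \N_i = \M_i$, and the quotient $\M_i/\M_i'$ is a finitely generated, $\O$-torsion-free (hence finite free, by the Bézout property of $\O$) module which vanishes after base change to $\R$, forcing $\M_i = \M_i'$. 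Thus the whole content of the proposition reduces to showing that $\M_i$ is finite free over $\O$ of the expected rank, and that $\M_i \otimes_\O \R = \N_i$.

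Granting these two facts, the saturation condition follows formally: $\M/\M_i$ is a finitely generated torsion-free $\O$-module, hence free by Bézout, so that $\M_i$ is a direct summand of $\M$. The heart of the proof is therefore to control the intersection $\M \cap \N_i$ globally over the open unit disk $\Delta$. The key tool will be the Frobenius structure: since $\varphi_q$ pulls back sections defined on a small annulus near $|u|=1$ to sections on a strictly larger annulus, iterating $\varphi_q$ extends any $\varphi_q$-stable family of boundary sections to a family defined on an exhausting chain of subannuli covering all of $\Delta$ — exactly the mechanism underlying the Frobenius trick in Lemma \eqref{frobenius trick}. Applied to an $\R$-basis of $\N_i$, it should produce candidate global $\O$-sections which, together with $\varphi_q$-equivariance, generate the desired $\M_i$ of the correct rank.

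The main technical obstacle is making this Frobenius extension argument rigorous at the level of submodules: one must ensure that the $\O$-sections produced actually lie in $\M$ (and not merely in some localisation of it), that they generate a submodule of the expected rank, and that the limiting construction is independent of all the choices involved. Concretely this amounts to bounding denominators in $E(u)$ — equivalently, controlling the poles of the constructed sections at each zero $x_n$ of $\varphi_q^n(E(u))$ inside $\Delta$ — and is essentially the bookkeeping underlying Kedlaya's slope descent results. Once carried out, the submodules $\M_i$ so obtained are $\varphi_q$-stable, finite free over $\O$ of the correct rank, saturated in $\M$, and recover $\N_i$ after base change to $\R$, completing the descent.
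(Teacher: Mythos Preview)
Your outline is in the same spirit as the argument the paper defers to (namely \cite[\textsection 4.2]{kim}, which the paper cites without reproducing): set $\M_i := \M \cap \N_i$, deduce saturation from torsion-freeness of the quotient via the B\'ezout property, and invoke the Frobenius structure to propagate the filtration from a boundary annulus inward across $\Delta$. Your treatment of uniqueness and of the formal reductions is correct.

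The gap is precisely at the step you label ``the main technical obstacle'', which you then do not resolve --- you defer to ``the bookkeeping underlying Kedlaya's slope descent results'' and write ``once carried out'' without carrying it out. Moreover, your framing in terms of bounding the $E(u)$-denominators of individual sections is not the right mechanism: iterating $\varphi_q$ on a chosen $\R$-basis of $\N_i$ genuinely produces elements of $\M \otimes_{\O} \O[1/\lambda]$ with poles of growing (though bounded-order) support at the $x_n$, and no amount of rescaling will simultaneously clear these poles and keep the span saturated. The clean argument works at the level of sub-bundles: one first invokes Kedlaya's theory to spread the slope filtration to a saturated filtration of $\M$ over some half-open annulus $\{r<|u|<1\}$; the Frobenius isomorphism $\varphi_q^*\M[1/E(u)]\simeq \M[1/E(u)]$ then transports each step to a saturated sub-bundle of $\M$ over $\{r^{1/q}<|u|<1\}$ away from $x_0$, and extension across $x_0$ is automatic by taking the saturation inside the ambient locally free sheaf (over each closed annulus the ring of functions is a PID, so a saturated submodule is determined by any localization). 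Iterating covers all of $\Delta$. Until you make this (or an equivalent) step explicit, the proposal is a plan rather than a proof.
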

\begin{proof}
The arguments in \cite[\textsection 4.2]{kim} carry over to our setting.
\end{proof}
\begin{remark}
Note that, unlike the proof of \cite[Proposition 1.3.7]{Kis06}, which relies crucially on a monodromy operator, the proof of \cite{kim} is intrinsic in the world of $\varphi$-modules.
\end{remark}
We can now translate the weak admissibility condition for Hodge--Pink isocrystals across the equivalence of categories in Theorem \ref{mutual inverse phi-module hodgepink}.
\begin{thm}\label{weak admissibility and slope 0}
    Let $D:=(D,\varphi_q,\Lambda)\in \hp(K)$. Then $D$ is weakly admissible if and only if $\M(D)$ is pure of slope $0$.
\end{thm}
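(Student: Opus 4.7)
The plan is to reduce the degree calculation to rank one, then use the descent of Kedlaya's slope filtration through the tower $\R \supset \O \supset \fkS_E\langle E(u)/\pi\rangle[1/\pi]$ together with the equivalence of Theorem \ref{mutual inverse phi-module hodgepink} to relate $\varphi_q$-subobjects of $\M(D)\otimes_\O\R$ with Hodge--Pink subobjects of $D$. I first establish the degree formula $\deg(\M(D)\otimes_{\O}\R) = t_N(D) - t_H(D)$. Passing to determinants reduces to the rank one case, where I use Kisin's presentation from Remark \ref{compare with Kisin}: for $D = K_{0,E}\cdot e$ with $\varphi_q(e) = \alpha e$ and Hodge--Pink lattice $\Lambda = E(u)^{-h}\widehat{D}_0$ (so $t_N = v_\pi(\alpha)$ and $t_H = h$), one has $\M(D) = \lambda^{-h}\O\cdot e$, and on the basis $\lambda^{-h}e$ the Frobenius equals $\alpha(\lambda/\varphi_q(\lambda))^h = \alpha(E(u)/E(0))^h$. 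Since $E(u)$ is a unit in $\R^{\mathrm{int}}$ while $v_\pi(E(0)) = 1$, this element has $\pi$-adic valuation $t_N - h$, giving slope $t_N - t_H$.

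The crucial technical step is to descend the slope filtration of $\M(D)\otimes_{\O}\R$ to a filtration by Hodge--Pink subobjects of $D$. Writing the slope filtration as $0 = \M_0 \subset \M_1 \subset \cdots \subset \M_r = \M(D)\otimes\R$ with $s_1 < \cdots < s_r$, Proposition \ref{slope filtration descend} descends each $\M_i$ to a saturated sub-vector bundle $\N_i \subseteq \M(D)$ in $\vect(\O)$, which one checks is a subobject in $\vect^{\varphi_q}(\O)$: since $\N_i$ is saturated and determined by its base change to $\R$, the $\varphi_q$-stability inherited over $\R$ descends uniquely to give the required isomorphism $\varphi_q^*\N_i[1/E(u)] \simeq \N_i[1/E(u)]$ inside $\M(D)[1/E(u)]$. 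Via Lemma \ref{frobenius trick}, restricting each $\N_i$ to $\fkS_E\langle E(u)/\pi\rangle[1/\pi]$ and applying the equivalence of Theorem \ref{mutual inverse phi-module hodgepink} then produces a filtration $0 = D_0 \subset D_1 \subset \cdots \subset D_r = D$ in $\hp(K)$ satisfying $s_i\cdot \mathrm{rank}(D_i/D_{i-1}) = t_N(D_i/D_{i-1}) - t_H(D_i/D_{i-1})$ by the degree formula applied to each subquotient.

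Both directions then follow easily. If $D$ is weakly admissible, then $\sum_i s_i\mathrm{rank}(D_i/D_{i-1}) = t_N(D) - t_H(D) = 0$, while weak admissibility applied to $D_1$ gives $s_1\mathrm{rank}(D_1) = t_N(D_1) - t_H(D_1) \geq 0$, hence $s_1 \geq 0$; since the $s_i$ are strictly increasing, this forces $r = 1$ and $s_1 = 0$, so $\M(D)$ is pure of slope zero. Conversely, if $\M(D)$ is pure of slope zero, then the degree formula gives $t_H(D) = t_N(D)$, and for any Hodge--Pink sub $D' \subseteq D$, the Beauville--Laszlo construction applied to $\Lambda' := \Lambda \cap (D'\otimes_{K_{0,E}}\mathrm{Fr}(\widehat{\fkS}_0))$ exhibits $\M(D')$ as a sub-$\varphi_q$-module of $\M(D)$; since every $\varphi_q$-sub of a pure-of-slope-$s$ module has all slopes $\geq s$ in Kedlaya's increasing-slope convention, we obtain $\mu(\M(D')\otimes\R) \geq 0$, i.e.\ $t_H(D') \leq t_N(D')$, completing weak admissibility.

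The main obstacle is the descent step: Proposition \ref{slope filtration descend} a priori only yields descent in $\vect(\O)$, so one must carefully verify the $\varphi_q$-stability of each $\N_i$ and that its restriction to $\fkS_E\langle E(u)/\pi\rangle[1/\pi]$ lies in the essential image of the Beauville--Laszlo functor $\M$. Keeping signs straight---Kedlaya's $\pi$-adic slope being $t_N - t_H$, opposite to the Fargues--Fontaine degree $t_H - t_N$ from Proposition \ref{admissible slopes 0}---is also essential to align ``pure of slope $0$'' with ``weakly admissible''.
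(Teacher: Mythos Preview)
Your proposal is correct and follows essentially the same approach as the paper: the rank-one computation is identical, and the general case via descent of the slope filtration through Proposition~\ref{slope filtration descend} and the equivalence of Theorem~\ref{mutual inverse phi-module hodgepink} is exactly the argument the paper invokes by citing \cite[Theorem 1.3.8]{Kis06}. One minor remark: in the paper's notation $\vect(\O)$ already means $\mathrm{Vect}^{\varphi_q}(\O)$, so Proposition~\ref{slope filtration descend} gives descent as $\varphi_q$-subobjects directly, and your worry about separately verifying $\varphi_q$-stability of the $\N_i$ is unnecessary.
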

Here in the statement we implicitly identify $\M(D)$ with the corresponding $\varphi_q$-module over $\O$ (Lemma \ref{frobenius trick}).
\begin{proof}
Assume first that $D$ has rank $1$. In this case
\begin{align*}
     \M(D) =D\otimes_{K_{0,E}}\lambda^{-t_H(D)}\O;
\end{align*}
see e.g. Remark \ref{compare with Kisin}. Pick a basis $e\in D$ and write $\varphi_D(e)=\alpha e$ for some $\alpha\in K_0$. Then  
\begin{displaymath}
    \varphi_q(e\otimes \lambda^{-t_H(D)})=(E(u)/E(0))^{t_H(D)}\alpha(e\otimes \lambda^{-t_H(D)});
\end{displaymath}
as $E(u)$ is a unit in $\R^{int}$, we see that $\M(D)$ has slope $t_N(D)-t_H(D)$. This proves the theorem for rank 1 objects. The general case then follows by the same argument as in \cite[Theorem 1.3.8]{Kis06}, using the equivalence in Theorem \ref{mutual inverse phi-module hodgepink} and Proposition \ref{slope filtration descend} in place of \cite[Proposition 1.3.7]{Kis06}.
\end{proof}
It will be convenient to state the following lemma separately. 
\begin{lem}\label{beauville laszlo glueing and extending vbs}
Base change defines an equivalence of categories
    \begin{displaymath}
        \mathrm{Vect}(\fkS_E)\simeq \mathrm{Vect}(\fkS_E[1/p])\times_{\mathrm{Vect}(\E)}\mathrm{Vect}(\O_{\E}),
    \end{displaymath}
    where $\O_{\E}:=\fkS_E[1/E(u)]^{\wedge}_p$ and $\E:=\O_{\E}[1/p]$. Moreover, this induces an equivalence\footnote{Using a result of Kedlaya \cite[Lemma 4.6]{BMS1}, one sees by the same argument that the lemma also holds for the perfectoid variant $A_{\inf,E}:=W_{\O_E}(\O_C^\flat)$ of $\fkS_E$.}  
    \begin{displaymath}
        \vect(\fkS_E)\simeq \vect(\fkS_E[1/p])\times_{\vect(\E)}\vect(\O_{\E}).
    \end{displaymath}
\end{lem}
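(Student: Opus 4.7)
The plan is to combine two classical gluing results: Zariski descent on the punctured spectrum of $\fkS_E$, and Beauville--Laszlo gluing along the non-zero-divisor $p$ in the localization $\fkS_E[1/E(u)]$. Together they will yield the stated fiber product description.

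Let $\m = (\pi, u)$ denote the maximal ideal of $\fkS_E$ and let $U := \Sp(\fkS_E)\setminus\{\m\}$ be the punctured spectrum. Since $\fkS_E = W_{\O_E}(k)[[u]]$ is a two-dimensional regular local ring, every finitely generated reflexive $\fkS_E$-module has depth $2$ at $\m$ and is thus free by Auslander--Buchsbaum; consequently, pushforward along $U\hookrightarrow \Sp(\fkS_E)$ defines an equivalence $\mathrm{Vect}(U)\simeq \mathrm{Vect}(\fkS_E)$. Moreover, the Eisenstein property of $E(u)$ gives $E(u)\equiv u^e\pmod{\pi}$, and $V(p)=V(\pi)$, so $V(p,E(u)) = V(\pi,u) = \{\m\}$; hence the Zariski opens $\Sp(\fkS_E[1/p])$ and $\Sp(\fkS_E[1/E(u)])$ cover $U$ with overlap $\Sp(\fkS_E[1/(pE(u))])$, and standard Zariski descent yields
\begin{align*}
\mathrm{Vect}(U) \simeq \mathrm{Vect}(\fkS_E[1/p]) \times_{\mathrm{Vect}(\fkS_E[1/(pE(u))])} \mathrm{Vect}(\fkS_E[1/E(u)]).
\end{align*}

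The next step will be to refine the $\fkS_E[1/E(u)]$-vertex using Beauville--Laszlo. Since $\fkS_E[1/E(u)]$ is a Noetherian domain in which $p$ is a non-zero-divisor, with $p$-adic completion $\O_\E$ and $\O_\E[1/p]=\E$, the classical Beauville--Laszlo theorem (e.g.\ \cite[Tag 0BP2]{Sta21}) yields
\begin{align*}
\mathrm{Vect}(\fkS_E[1/E(u)]) \simeq \mathrm{Vect}(\fkS_E[1/(pE(u))]) \times_{\mathrm{Vect}(\E)} \mathrm{Vect}(\O_\E).
\end{align*}
Substituting this into the previous equivalence and invoking the associativity $X\times_Z(Z\times_W Y)\simeq X\times_W Y$ of fiber products collapses the intermediate vertex, giving the desired equivalence $\mathrm{Vect}(\fkS_E)\simeq \mathrm{Vect}(\fkS_E[1/p])\times_{\mathrm{Vect}(\E)}\mathrm{Vect}(\O_\E)$.

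For the $\varphi_q$-enhancement, the Frobenius acts compatibly on every ring in the diagram, and since $E(u)$ is already a unit in both $\O_\E$ and $\E$, the condition $(\varphi_q^*\M)[1/E(u)]\simeq \M[1/E(u)]$ is local for the cover; hence the equivalence upgrades from $\mathrm{Vect}$ to $\mathrm{Vect}^{\varphi_q}$ formally. I expect the main obstacle will appear only in the $A_{\inf,E}$ variant stated in the footnote: there Noetherianity and Auslander--Buchsbaum both fail, and one must invoke Kedlaya's \cite[Lemma 4.6]{BMS1} to extend vector bundles across the codimension-two closed point before applying the remainder of the strategy verbatim.
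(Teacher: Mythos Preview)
Your proof is correct and follows essentially the same route as the paper's: the paper's one-line argument invokes exactly the two ingredients you spell out, namely the extension of vector bundles across the closed point of $\Sp(\fkS_E)$ (your Auslander--Buchsbaum step) together with Beauville--Laszlo gluing along $p$ on $\fkS_E[1/E(u)]$. Your version simply makes explicit the intermediate Zariski fiber product and the associativity that collapses it, and your treatment of the $\varphi_q$-enhancement matches the paper's claim that the second assertion follows formally from the first.
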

\begin{proof}
It suffices to show the first assertion, which follows from Beauville--Laszlo glueing, and the well-known fact that restricting gives an equivalence 
\begin{displaymath}
\mathrm{Vect}(\Sp(\fkS_E))\simeq \mathrm{Vect}(\Sp(\fkS_E)-\{\m\})
    \end{displaymath}
    (note that $\fkS_E$ is a $2$-dimensional regular local ring). 
\end{proof}
\begin{prop}\label{slope 0 and BK modules}
Base change defines an equivalence
    \begin{displaymath}
        \mathrm{Vect}^{\varphi_q}(\fkS_E)[1/p]\simeq \mathrm{Vect}^{\varphi_q,0}(\O),
    \end{displaymath}
    where the source denotes the isogeny category of $\mathrm{Vect}^{\varphi_q}(\fkS_E)$.
\end{prop}
\begin{proof}
See the proof of \cite[Lemma 1.3.13]{Kis06}. 
\end{proof}
\begin{cor}\label{fully faithful into Breuil Kisin}
    There is a natural fully faithful functor
    \begin{displaymath}
        \begin{tikzcd}
        \mathrm{MF}^{\varphi_q,w.a}(K)\ar[r,hook,"{\fM}(\cdot)"] & \mathrm{Vect}^{\varphi_q}(\fkS_E)[1/p].
        \end{tikzcd}
    \end{displaymath}
\end{cor}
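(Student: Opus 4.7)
The plan is to obtain the functor by composing the various equivalences and fully faithful embeddings established above, matching the weak admissibility condition on the source with the pure-of-slope-$0$ condition on the target side.

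Concretely, I would proceed as follows. First, apply the fully faithful embedding $P : \mathrm{MF}^{\varphi_q}(K) \hookrightarrow \mathrm{HP}^{\varphi_q}(K)$ from Subsection~\ref{construction kisin 06}; by Lemma~\eqref{weak admissibility preserve}, an object $D \in \mathrm{MF}^{\varphi_q}(K)$ is weakly admissible if and only if $P(D)$ is weakly admissible in $\mathrm{HP}^{\varphi_q}(K)$. Next, apply the equivalence
\begin{displaymath}
\M(\cdot) : \mathrm{HP}^{\varphi_q}(K) \xrightarrow{\sim} \mathrm{Vect}^{\varphi_q}(\fkS_E\langle I/\pi\rangle[1/\pi])
\end{displaymath}
of Theorem~\eqref{mutual inverse phi-module hodgepink}, followed by the analytic-continuation equivalence
\begin{displaymath}
\mathrm{Vect}^{\varphi_q}(\fkS_E\langle I/\pi\rangle[1/\pi]) \xleftarrow{\sim} \mathrm{Vect}^{\varphi_q}(\O)
\end{displaymath}
of Lemma~\eqref{frobenius trick}, so that $\M(P(D))$ is regarded as an object of $\mathrm{Vect}^{\varphi_q}(\O)$.

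Now comes the step where weak admissibility enters. By Theorem~\eqref{weak admissibility and slope 0}, $\M(P(D))$ is pure of slope $0$ in $\mathrm{Vect}^{\varphi_q}(\O)$ precisely when $P(D)$ (equivalently, $D$) is weakly admissible. Hence restricting to weakly admissible objects lands in the subcategory $\mathrm{Vect}^{\varphi_q,0}(\O)$. Finally, I invoke Proposition~\eqref{slope 0 and BK modules}, which provides an equivalence
\begin{displaymath}
\mathrm{Vect}^{\varphi_q}(\fkS_E)[1/p] \xrightarrow{\sim} \mathrm{Vect}^{\varphi_q,0}(\O),
\end{displaymath}
to produce a canonical preimage $\fM(D) \in \mathrm{Vect}^{\varphi_q}(\fkS_E)[1/p]$ of $\M(P(D))$. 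Composing these four functors yields the desired $\fM(\cdot)$.

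Fully faithfulness is then automatic: each constituent functor is either an equivalence (Theorem~\eqref{mutual inverse phi-module hodgepink}, Lemma~\eqref{frobenius trick}, Proposition~\eqref{slope 0 and BK modules}) or the fully faithful embedding $P$. There is no real obstacle here beyond carefully matching the slope/admissibility conditions; the only point requiring a small comment is that the restriction of the equivalence in Proposition~\eqref{slope 0 and BK modules} to the essential image of the composite indeed lands in $\mathrm{Vect}^{\varphi_q}(\fkS_E)[1/p]$, which is exactly the content of the ``pure of slope $0$'' characterization. The main conceptual input is therefore Berger's observation, packaged here as Theorem~\eqref{weak admissibility and slope 0}, relating weak admissibility on the Hodge--Pink side with the slope-$0$ condition on the open unit disk.
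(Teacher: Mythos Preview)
Your proposal is correct and follows essentially the same approach as the paper's proof, which simply cites Lemma~\eqref{weak admissibility preserve}, Theorem~\eqref{weak admissibility and slope 0}, and Proposition~\eqref{slope 0 and BK modules}; you have just spelled out the chain of functors (including the equivalences of Theorem~\eqref{mutual inverse phi-module hodgepink} and Lemma~\eqref{frobenius trick}) more explicitly.
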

\begin{proof}
    This follows by combining Lemma \ref{weak admissibility preserve}, Theorem \ref{weak admissibility and slope 0}, and Proposition \ref{slope 0 and BK modules}.
\end{proof}
\begin{prop}\label{fully faithful etale realization Kisin}
    The base change functor 
    \begin{displaymath}
        \vect(\fkS_E)\to \vect(\fkS_E[1/E(u)]^{\wedge}_p)
    \end{displaymath}
    is fully faithful. 
\end{prop}
\begin{proof}
With the previous results in place, the proof is similar to that of \cite[Proposition 2.1.12]{Kis06}. Namely, as in \textit{loc. cit.}, it suffices to show that if $h: \fM_1\to \fM_2$ is a map in $\vect(\fkS_E)$ such that $h[1/E(u)]^{\wedge}_p$ is an isomorphism, then $h$ is an isomorphism. We may assume $\fM_1$ and $\fM_2$ are free of rank $1$. By Lemma \ref{beauville laszlo glueing and extending vbs}, it suffices to show that $h[1/p]$ is an isomorphism. This follows by using the embedding $\vect(\fkS_E)[1/p]\hookrightarrow \mathrm{HP}^{\varphi_q, w.a.}(K)$, and the fact that a map of \textit{weakly admissible} objects in $\hp(K)$ which is an isomrphism on underlying modules (e.g. a nonzero map between rank 1 objects) is necessarily an isomorphism.
\end{proof}
\section{Prismatic \texorpdfstring{$F$}{F}-crystals and \texorpdfstring{$E$}{E}-crystalline Galois representations}\label{prismatic story}
\subsection{Preliminaries on \texorpdfstring{$\O_E$}{OE}-prisms}
We keep the notation as before. In particular, we fix throughout a uniformizer $\pi$ of $E$, and an embedding $\tau_0: E\hookrightarrow K\subseteq C$.

\begin{defn}[{{\cite[Definition 3.1]{marks2023prismatic}}}]
Let $X$ be a $\pi$-adic formal scheme over $\Spf \O_E$. The (absolute) prismatic site $(X)_{\Prism,\O_E}$ is by definition the site with whose objects are bounded $\O_E$-prisms $(A,I)$ with a map $\Sp(A/I)\to X$ of $\pi$-adic $\O_E$-formal schemes, with coverings given by maps of prisms whose underlying ring map is $(\pi,I)$-adically completely faithfully flat.
\end{defn}
For the precise definition of $\O_E$-prisms, we refer the reader to \cite{marks2023prismatic} (see also \cite{ito2023prismatic}). As we will work entirely with $\O_E$-prisms in what follows, we will typically drop $\O_E$ from the notation.

We mention here some examples of $\O_E$-prisms that are most relevant for our purpose. 
\begin{example}\label{example OE prisms}
    (1) (Breuil--Kisin prisms) Choose a uniformizer $\pi_K\in K$. As $\fkS_E:=W_{\O_E}(k)[[u]]$, endowed with the $\delta_E$-structure given by $\delta_E(u)=0$ (or equivalently, $\varphi_q(u)=u^q$). Let $E(u)\in W_{\O_E}(k)[u]$ be the Eisenstein polynomial of $\pi_K$ over $K_{0,E}$; here $K_{0,E}:=K_0\otimes_{E_0} E$ denotes the maximal unramified extension of $\tau_0: E\subseteq K$. As the map $\widetilde{\theta}: \fkS_E\twoheadrightarrow \O_K, u\mapsto \pi_K$ is surjective with kernel $I=(E(u))$, the pair $(\fkS_E,I)$ gives an object in $(\O_K)_{\Prism,\O_E}$, which we will refer to as the Breuil--Kisin prism associated to the chosen uniformizer $\pi_K$.

    (2) (The $A_{\inf,E}$-prism) Recall that $C$ denotes a fixed completed algebraic closure of $K$. Set $A_{\inf,E}:=W_{\O_E}(\O_C^\flat)$, equipped with the natural Frobenius $\varphi_q$. As usual, the Fontaine's theta map $\theta_E: A_{\inf,E}\to \O_C$ is surjective with kernel generated by a nonzero-divisor $\xi$. The twisted map $\widetilde{\theta}_E:=\theta_E\circ \varphi_q^{-1}$ is thus also surjective with kernel $(\widetilde{\xi})$ where $\widetilde{\xi}:=\varphi_q(\xi)$. In particular, the pair $(A_{\inf,E},(\widetilde{\xi}))$ defines an object in $(\O_K)_{\Prism,\O_E}$, and is the $\O_E$-prism corresponding to the perfectoid $\O_E$-algebra $\O_C$, i.e. $A_{\inf,E}=\Prism_{\O_C}$. For later use, we give here an explicit choice of $\xi$. Let $v=(v_0,v_1,\ldots)\in T\G$ be a generator of the Tate module of the Lubin--Tate group $\G$ of $E$ associated to some Frobenius polynomial $Q\in \O_E[T]$ for $\pi$. As $v_{n+1}^q\equiv v_n\bmod{\pi}$ for all $n$, we obtain an element 
    \begin{displaymath}
        v:=(v_0\bmod \pi,v_1\bmod \pi,\ldots)\in \varprojlim_{x\mapsto x^q}\O_{C}/\pi=\O_{C^\flat}.
    \end{displaymath}
    (The last identification is given by $(a_n)_n\mapsto (a_0,a_1^{p^{f-1}},\ldots,a_1^p,a_1,a_2^{p^{f-1}},\ldots)$.) Following \cite[Proposition 1.2.7]{FFCurves}, for a perfect $\mathbf{F}_q$-algebra $A$, we denote by $[\cdot]_Q$ (or $[\cdot]_{\G}$) the unique map $A\to W_{\O_E}(A)$ satisfying $[x]_Q\equiv x\bmod{\pi}$ and $Q([x]_Q)=\varphi_q([x]_Q)$. When $E=\mathbf{Q}_p,\pi=p$ and $Q(T)=(1+T)^p-1$, $[x]_Q$ is nothing but $[x+1]-1$ and hence $[v]_Q=[\epsilon]-1$ is the usual element $\mu$; accordingly, we will also write $\mu:=[v]_Q$ here. One can then check that $\xi:=\mu/\varphi_q^{-1}(\mu)$ is a generator of $\ker(\theta_E)$.  

    Exactly as in \cite[Example 2.6]{prismatic}, one can show that both examples above give covers of the final object in the topos $\mathrm{Shv}((\O_K)_{\Prism})$. Moreover, there is a map $\fkS_E\to \Prism_{\O_C}$ in $(\O_K)_{\Prism,\O_E}$, defined by $u\mapsto [\pi_K^\flat]$, where $\pi_K^\flat:=(\pi_K,\pi_K^{1/q},\ldots)\in \O_C^\flat$ is a compatible system of $q$-power roots of the fixed uniformizer $\pi_K$. 

    (3) (The $A_{\mathrm{cris},E}$-prism) Recall from Remark \ref{Bcris E} that $A_{\mathrm{cris},E}$ denotes the $\pi$-completed $\O_E$-PD envelope of $A_{\inf,E}$ with respect to the kernel of $\theta_E$, $B_{\mathrm{cris},E}^+:=A_{\mathrm{cris},E}[1/\pi]$, and $B_{\mathrm{cris},E}=B_{\mathrm{cris},E}^+[1/t_E]$. By \cite[Proposition 2.6.5]{ito2023prismatic}, the pair $(A_{\mathrm{cris},E},(\pi))$ identifies with the prismatic envelope $\Prism_{\O_C}\{\widetilde{\xi}/\pi\}$, hence also defines an object in $(\O_K)_{\Prism,\O_E}$, which we denote by $\Prism_{\O_C/\pi}$ in analogy with the usual case\footnote{While it is reasonable to define a notion of qrsp rings and their associated prisms in the ``$\O_E$-context'', for our purpose it suffices to see this as a purely suggestive notation.}; of course there is a natural map $\Prism_{\O_C}\to \Prism_{\O_C/\pi}$.
\end{example}
\begin{remark}[$W_{\O_E}(k)$-algebra structure on objects in $(\O_K)_{\Prism,\O_E}$]\label{algebra structure Wk}
    Fix an object $(A,I)\in (\O_K)_{\Prism,\O_E}$ with structure map $\O_K\to A/I$. By standard deformation theory, the composition $W_{\O_E}(k)\to \O_K\to A/I$ lifts uniquely to an $\O_E$-algebra map $W_{\O_E}(k)\to A$. In what follows, we will always regard an object in $(\O_K)_{\Prism,\O_E}$ as an $W_{\O_E}(k)$-algebra via this map. (Note that for the prism $\Prism_{\O_C}=W_{\O_E}(\O_C^\flat)$, this is not the ``natural'' structure (induced by the canonical section $k\to \O_C^\flat$) but its $\varphi_q$-twist: the point is that we are taking into account not only the underlying ring, but also the invertible ideal defining the prism structure.) By uniqueness, morphisms in $(\O_K)_{\Prism,\O_E}$ automatically respect this algebra structure.
\end{remark}
\begin{notation}[Some period sheaves on the prismatic site] 
We consider the following period sheaves on $X_{\Prism}$: 
\begin{itemize}
    \item The prismatic structure sheaf $\O_{\Prism}: (A,I)\mapsto A$; this comes equipped with an ideal sheaf $I_{\Prism}: (A,I)\mapsto I$ and a ``Frobenius'' $\varphi_q: \O_{\Prism}\to \O_{\Prism}$.

    \item The \' etale structure sheaf $\O_{\Prism}[1/{\I_{\Prism}}]^{\wedge}_{\pi}$:
    \begin{displaymath}
        (A,I)\mapsto A[1/I]^{\wedge}_{\pi}.
    \end{displaymath}
    \item The rational localization $\O_{\Prism}\langle \I_{\Prism}/\pi\rangle$: 
    \begin{displaymath}
        (A,I) \mapsto A[ I/\pi]^{\wedge}_{\pi}.
    \end{displaymath}
    \item The de Rham period sheaves: 
    \begin{displaymath}
    \mathbb{B}_{\mathrm{dR}}^+:=(\O_{\Prism}[1/\pi])^{\wedge}_{\I_{\Prism}}\quad\text{and}\quad \mathbb{B}_{\mathrm{dR}}:=\mathbb{B}_{\mathrm{dR}}^+[1/\I_{\Prism}].
\end{displaymath}
\end{itemize}
It is easy to see that the Frobenius on $\O_{\Prism}$ extends naturally to the sheaves $\O_{\Prism}[1/{\I_{\Prism}}]^{\wedge}_{\pi}$ and $\O_{\Prism}\langle \I_{\Prism}/\pi\rangle$. (Note again that in case $X=\Spf(\O_K)$, the value $\mathbb{B}_{\mathrm{dR}}^+(\Prism_{\O_C})=:B_{\mathrm{dR}}^+$ is a $\varphi_q$-twist of the ring denoted by the same notation in the appendix.)
\end{notation}
\begin{defn}
A prismatic $F$-crystal on $X$ is a pair $(\E,\varphi_{\E})$ where $\E$ is a crystal of vector bundles on the ringed site $(X_{\Prism},\O_{\Prism})$, and $\varphi_{\E}$ is an isomorphism $(\varphi_q^*\E)[1/\I_{\Prism}]\cong \E[1/\I_{\Prism}]$. The resulting category is denoted by $\mathrm{Vect}^{\varphi_q}(X_{\Prism},\O_{\Prism})$.

More generally, for a sheaf $\O'$ of $\O_{\Prism}$-algebras equipped with a compatible Frobenius, we define similarly the category $\vect(X_{\Prism},\O')$ of $F$-crystals over $\O'$ on $X$. Similarly, if $(A,I)$ is an $\O_E$-prism, we define in the same way the category $\vect(A,I)$ (or simply $\vect(A)$) of $F$-crystals (or Breuil--Kisin modules) over $A$.  
\end{defn} 
\begin{remark}
As descent for vector bundles is effective for the flat topology, to give a crystal of vector bundles on $(X_{{\Prism}},\O_{\Prism})$ is to give for each object $(A,I)$ in $X_{\Prism}$, a finite projective $A$-module $M_A$, and for each map $(A,I)\to (B,J)$ in $X_{\Prism}$, an isomorphism $M_A\otimes_AB\xrightarrow{\sim}M_B$ compatible with compositions. In other words,
\begin{displaymath}
    \mathrm{Vect}^{(\varphi_q)}(X_{\Prism},\O_{\Prism})\simeq \lim_{(A,I)\in X_{\Prism}}\mathrm{Vect}^{(\varphi_q)}(A,I).
\end{displaymath}
A similar result holds for the sheaves $\O_{\Prism}[1/\I_{\Prism}]^{\wedge}_\pi$ and $\O_{\Prism}\langle \I_{\Prism}/\pi\rangle [1/\pi]$ (for the first see \cite[Propposition 2.7]{prismatic}; for the second see the proof of \cite[Corollary 7.17]{prismatic}). 
\end{remark}
\subsection{Formulation of the main theorem}
We now restrict ourselves to the case $X=\Spf(\O_K)$, viewed as an $\O_E$-formal scheme using the fixed embedding $\tau_0: E\hookrightarrow K$.

Recall from \cite{marks2023prismatic} that there is a natural equivalence
\begin{align*}
     T: \vect((\O_K)_{\Prism},\O_{\Prism}[1/\I_{\Prism}]^{\wedge}_{\pi}) &\simeq \mathrm{Rep}_{\O_E} (G_K)\\
     \E &\mapsto (\E(\Prism_{\O_C}))^{\varphi_q=1}.
\end{align*}
In particular, by extending scalars along $\O_{\Prism}\to \O_{\Prism}[1/\I_{\Prism}]^{\wedge}_{\pi}$, we obtain a functor
\begin{displaymath}
    T: \mathrm{Vect}^{\varphi_q}(X_{\Prism},\O_{\Prism}) \to \mathrm{Rep}_{\O_E}(G_K),
\end{displaymath}
which we again refer to as the \' etale realization functor. We can now state our main result. 
\begin{thm}\label{main thm body}
The \' etale realization functor gives rise to an equivalence of categories 
\begin{displaymath}
T: \mathrm{Vect}^{\varphi_q}(X_{\Prism},\O_{\Prism})\to \mathrm{Rep}^{\mathrm{crys}}_{\O_E}(G_K)
\end{displaymath}
where the target denotes the category of finite free $\O_E$-modules $T$ equipped with a continuous linear $G_K$-action such that $T[1/\pi]$ is $E$-crystalline.
\end{thm}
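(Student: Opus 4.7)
Well-definedness and full faithfulness of $T$ should go through essentially verbatim from \cite{prismatic}: the only input beyond formal properties is the equivalence of \cite{marks2023prismatic} at the \' etale sheaf level, together with faithfully flat descent along the \v Cech nerve of $\Prism_{\O_C}$. So the entire content of the theorem is essential surjectivity, and that is where the plan focuses.

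Given $T \in \mathrm{Rep}^{\mathrm{crys}}_{\O_E}(G_K)$, set $V := T[1/\pi]$ and let $D := D_{\mathrm{cris},E}(V)$, a weakly admissible filtered $\varphi_q$-module over $K$ by Theorem \eqref{weakly admissible vs crys rep}. The goal is to assemble a prismatic $F$-crystal $\E$ on $(\O_K)_{\Prism,\O_E}$ with $T(\E) = T$. Following \cite[Construction 6.5]{prismatic}, on the rational locus $\{|\I_{\Prism}| \leq |\pi| \neq 0\}$ inside $\Spa(\O_{\Prism})$ one attaches to each transversal prism $(A,I)$ the $\varphi_q$-module over $A\langle I/\pi\rangle [1/\pi]$ obtained by modifying the constant module $D \otimes_{K_{0,E}} A\langle I/\pi\rangle [1/\pi]$ at $(I)$ via the Hodge--Pink lattice of $D$, in the spirit of \textsection 3.2. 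The contracting Frobenius trick of Lemma \eqref{frobenius trick} will then extend this uniquely to all of $\{|\pi| \neq 0\}\subseteq\Spa(\O_{\Prism})$. Specialising to a Breuil--Kisin prism $(\fkS_E, (E(u)))$, Corollary \eqref{fully faithful into Breuil Kisin}, which encodes Berger's observation that weak admissibility corresponds to pure-of-slope-$0$ via Theorem \eqref{weak admissibility and slope 0} and Proposition \eqref{slope 0 and BK modules}, produces a Breuil--Kisin module $\fM \in \vect(\fkS_E)[1/\pi]$. The Galois lattice $T$ then pins down an integral normalisation inside the \' etale realisation $\fM[1/E(u)]^{\wedge}_{\pi}$, upgrading $\fM$ to an honest object of $\vect(\fkS_E)$.

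The decisive step is to promote this pointwise data to a crystal, equivalently to produce a descent datum on the self-coproduct $\fkS_E^{(1)}$ of $(\fkS_E, I)$ in $(\O_K)_{\Prism,\O_E}$. By construction the datum exists canonically over the rational locus $\{|\I| \leq |\pi| \neq 0\} \subseteq \Spa(\fkS_E^{(1)})$, hence via Frobenius pullback over all of $\{|\pi| \neq 0\}$. The \emph{main obstacle} is extending this to a descent datum over the algebraic ring $\fkS_E^{(1)}[1/\pi]$, since $\{|\pi|\neq 0\}$ is not a rational open inside $\Spa(\fkS_E^{(1)})$. Here is precisely where Proposition \eqref{nice statement} will do its work: applied to the transversal $\O_E$-prism $(\fkS_E^{(1)}, (d))$ it furnishes a fully faithful base change $\vect(\fkS_E^{(1)})[1/\pi] \hookrightarrow \vect(\fkS_E^{(1)}\langle d/\pi\rangle [1/\pi])$, which by standard descent turns the glueing isomorphism already available over $\fkS_E^{(1)}\langle d/\pi\rangle [1/\pi]$ into one over $\fkS_E^{(1)}[1/\pi]$. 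Proving Proposition \eqref{nice statement} itself will be the technical heart of the argument, following the strategy of \cite{duliu}: one reduces to controlling $\varphi_q$-stable sub-$A$-modules of $A[1/\pi]$ by their image in $A\langle d/\pi\rangle [1/\pi]$, exploiting transversality of $(A,(d))$.

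With the descent datum on $\fkS_E^{(1)}[1/\pi]$ in hand, the remaining extension to $\fkS_E^{(1)}$ proceeds as in \cite[\textsection 6.4]{prismatic}: the \' etale localisation $\fkS_E^{(1)}[1/E(u)]^{\wedge}_{\pi}$ automatically carries the integral descent isomorphism coming from $T$ (via the Marks equivalence), and a Beauville--Laszlo glueing of the type codified in Lemma \eqref{beauville laszlo glueing and extending vbs}, applied to $\fkS_E^{(1)}$ in place of $\fkS_E$, combines the two into a descent datum on all of $\fkS_E^{(1)}$. The resulting object $\E \in \vect((\O_K)_{\Prism,\O_E},\O_{\Prism})$ satisfies $T(\E) = T$ by construction on the perfectoid side, proving essential surjectivity and hence the theorem.
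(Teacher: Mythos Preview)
Your plan tracks the paper's argument closely and is correct through the extension of the descent datum to $\fkS_E^{(1)}[1/\pi]$ via Proposition \eqref{nice statement}. Two points deserve more care.

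First, well-definedness is not quite verbatim from \cite{prismatic}: you must show that $T(\E)[1/\pi]$ is $E$-crystalline, not merely crystalline. The paper does this by evaluating $\E$ at the crystalline prism $\Prism_{\O_C/\pi}=A_{\mathrm{cris},E}$ and using a Frobenius-twisted map $W_{\O_E}(k)\to \Prism_{\O_C/\pi}'$ to produce a $G_K$-equivariant trivialisation of $T(\E)\otimes_{\O_E}B_{\mathrm{cris},E}$, after which Remark \eqref{analytic crys natural notion} applies. This is genuinely new relative to the $\mathbf{Q}_p$-case.

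Second, and more seriously, your final glueing step is misdescribed. You propose to apply Lemma \eqref{beauville laszlo glueing and extending vbs} directly to $\fkS_E^{(1)}$, but that lemma rests on the punctured-spectrum equivalence for a $2$-dimensional regular local ring, and $\fkS_E^{(1)}$ is not such a ring. The paper (following \cite[\S 6.4]{prismatic}) instead base-changes to $\Prism_{\O_C}$, where the analogous glueing does hold. More importantly, you assert that the \'etale descent datum coming from $T$ is \emph{automatically} compatible on the overlap with the rational descent datum coming from $\M(D)$; this compatibility is precisely the nontrivial content of the final step and is not free. In the paper's footnoted argument it becomes the statement that the $G_K$-action on $T(\fM)[1/\pi]$ inherited from the crystal structure of $\M(D)$ coincides with the original action on $T[1/\pi]$, and this is deduced from full faithfulness of the restriction $\mathrm{Rep}_E^{\mathrm{cris}}(G_K)\to \mathrm{Rep}_E(G_{K_\infty})$. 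Without this step, your two descent data have no reason to glue.
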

\begin{proof}
We first show that the \' etale realization $T(\E)$ of a prismatic $F$-crystal on $\O_K$ is indeed an object in the target; full faithfulness and essential surjectivity will be dealt separately below. We will follow the proof of \cite[Proposition 5.3]{prismatic}. First by the crystal structure of $\E$, we have a natural isomorphism 
\begin{displaymath}
\E(\Prism_{\O_C})\otimes_{\Prism_{\O_C}}\Prism_{\O_C/\pi}\xrightarrow{} \E(\Prism_{\O_C/\pi}).
\end{displaymath}
Also, by Lemma \ref{BKF} below, we have a natural identification 
\begin{displaymath}
    T(\E)\otimes_{\O_E}\Prism_{\O_C}[1/\mu]=\E(\Prism_{\O_C})[1/\mu]
\end{displaymath}
Pick $n\gg 0$ so that the map $\varphi_q^n: \O_K/\pi\to \O_K/\pi$ factors through the natural reduction map $\O_K/\pi\twoheadrightarrow k$ (where $k$ denotes the residue field of $K$). In particular, we see that the natural map $k\to \O_C/\pi$ is $\O_K$-linear when the target is now regarded as an $\O_K$-algebra via the map $\varphi_q^n: \O_K/\pi\to \O_C/\pi$. This lifts to a map $W_{\O_E}(k)\to \Prism_{\O_C/\pi}'$ in $(\O_K)_{\Prism}$, where $\Prism_{\O_C}'$ denotes the object in $(\O_K)_{\Prism}$ with underlying prism $\Prism_{\O_C/\pi}$ but endowed with the map $\O_K\xrightarrow{\varphi_q^n} \O_C/\pi \to \Prism_{\O_C/\pi}/(\pi)$ (thus the Frobenius on $\Prism_{\O_C/\pi}$ defines a map $\varphi_q^n: \Prism_{\O_C/\pi}\to \Prism_{\O_{C}/\pi}'$ in $(\O_K)_{\Prism}$). Thus, by using the crystal and Frobenius structures of $\E$, we obtain a natural isomorphism 
\begin{align*}
    \E(W_{\O_E}(k))\otimes_{W_{\O_E}(k)}\Prism_{\O_C/\pi}[1/\pi] & \simeq \E(\Prism_{\O_{C}/\pi}')[1/\pi]\\
    &\simeq (\varphi_q)^*\E(\Prism_{\O_C/\pi})[1/\pi]\\
    &\simeq \E(\Prism_{\O_C/\pi})[1/\pi].
\end{align*}
Note also that as $W_{\O_E}(k)$ is fixed by the $G_K$-action on $\Prism_{\O_C/\pi}'$ under the map $W_{\O_E}(k)\to \Prism_{\O_C/\pi}'$, the crystal property of $\E$ again implies that $G_K$ acts trivially on $\E(W_{\O_E}(k))$.
Putting things together, we obtain a $G_K$-equivariant isomorphism
\begin{displaymath}
    T(\E)\otimes_{\O_E}B_{\mathrm{cris},E}\simeq \E(W_{\O_E}(k))\otimes_{W_{\O_E}(k)}B_{\mathrm{cris},E},
\end{displaymath}
with $G_K$ acting trivially on $\E(W_{\O_E}(k))$. By Theorem \ref{characterization of E-crystalline main body} (1), this means that the $G_K$-representation $T(\E)[1/\pi]$ is $E$-crystalline, as desired.
\end{proof}
\begin{lem}\label{BKF}
Fix an object $M\in \vect(\Prism_{\O_C})$ with \' etale realization $T:=T(M)$. Then 
\begin{align}\label{4.26}
    T \otimes_{\O_E}A_{\inf,E}\left[\dfrac{1}{\mu}\right]=M\left[ \dfrac{1}{\mu}\right]
\end{align}
as submodules of $T\otimes_{\O_E}W_{\O_E}(C^\flat)=M\otimes_{A_{\inf,E}}W_{\O_E}(C^\flat)$. 
\end{lem}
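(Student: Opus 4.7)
The statement is the $\O_E$-analog of the classical Breuil--Kisin--Fargues identification of Bhatt--Morrow--Scholze, and the plan is to adapt that argument. The ambient isomorphism $T \otimes_{\O_E} W_{\O_E}(C^\flat) = M \otimes_{A_{\inf,E}} W_{\O_E}(C^\flat)$, inside which the claimed equality lives, is a direct consequence of the étale realization equivalence: from $T \otimes_{\O_E} A_{\inf,E}[1/\widetilde{\xi}]^{\wedge}_\pi = M[1/\widetilde{\xi}]^{\wedge}_\pi$, combined with the identification $A_{\inf,E}[1/\widetilde{\xi}]^{\wedge}_\pi = W_{\O_E}(C^\flat)$. (This latter identification is straightforward: both rings are $\pi$-torsion-free, $\pi$-adically complete $\O_E$-algebra lifts of the perfect $\mathbf{F}_q$-algebra $C^\flat$, since $\widetilde{\xi} \equiv v^{q-1}$ modulo $\pi$ and inverting $v^{q-1}$ in $\O_{C^\flat}$ gives $C^\flat$; hence they agree by uniqueness of $\O_E$-Witt-type lifts.)

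Inside this common overmodule, $T \otimes_{\O_E} A_{\inf,E}[1/\mu]$ and $M[1/\mu]$ are both finite free $A_{\inf,E}[1/\mu]$-modules of rank $d := \mathrm{rank}_{A_{\inf,E}} M$. I would translate the claim into a statement about the change-of-basis matrix: fixing an $A_{\inf,E}$-basis $e_1,\ldots,e_d$ of $M$ and an $\O_E$-basis $t_1,\ldots,t_d$ of $T$, writing $t_j = \sum_i a_{ij} e_i$ with $a_{ij} \in W_{\O_E}(C^\flat)$, the lemma is equivalent to $A := (a_{ij}) \in GL_d(A_{\inf,E}[1/\mu])$. The key additional information is the Frobenius equation $\Phi \cdot \varphi_q(A) = A$, where $\Phi \in M_d(A_{\inf,E})$ is the matrix of $\varphi_M$ in the basis $(e_i)$; in particular $\det \Phi$ is a unit times a power of $\widetilde{\xi}$. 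Iterating this equation together with the identity $\varphi_q(\mu) = \widetilde{\xi}\mu$ (giving $\varphi_q^n(\mu) = \mu \prod_{i=0}^{n-1}\varphi_q^i(\widetilde{\xi})$) allows one to convert the a priori $\widetilde{\xi}$-denominators of the entries of $A$ into $\mu$-denominators, via Frobenius-invariance.

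The main obstacle is carrying out this Frobenius-descent rigorously, namely showing that the iterated Frobenius manipulation produces an element of $A_{\inf,E}[1/\mu]$ rather than only a limit in $W_{\O_E}(C^\flat)$. In the classical case $E = \mathbf{Q}_p$ this is handled by an explicit Witt-vector computation using the particular structure of $\mu = [\varepsilon] - 1$ and $\widetilde{\xi} = \mu/\varphi^{-1}(\mu)$; for general $E$, the same strategy should go through using the Lubin--Tate element $\mu = [v]_Q$ from Example \ref{example OE prisms} and the identity $\mu = \xi \varphi_q^{-1}(\mu)$ iterated, with appropriate bookkeeping for the $\O_E$-Witt vector arithmetic.
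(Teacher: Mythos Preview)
Your setup is correct: the ambient identification $T\otimes_{\O_E}W_{\O_E}(C^\flat)=M\otimes_{A_{\inf,E}}W_{\O_E}(C^\flat)$, the translation into a statement about the change-of-basis matrix $A$, the Frobenius equation $\Phi\cdot\varphi_q(A)=A$, and the identity $\varphi_q(\mu)=\widetilde{\xi}\mu$ are all right. But the proposal has a genuine gap at exactly the point you flag as ``the main obstacle'', and your description of how the classical case handles it is not accurate.

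Iterating $A=\Phi\,\varphi_q(\Phi)\cdots\varphi_q^{n-1}(\Phi)\,\varphi_q^n(A)$ does not help by itself: the prefactor has denominators a power of $\prod_{i=0}^{n-1}\varphi_q^i(\widetilde{\xi})=\varphi_q^n(\mu)/\mu$, which is \emph{not} a unit times a $\mu$-power in $A_{\inf,E}$, and the tail $\varphi_q^n(A)$ has no reason to converge in any topology that would push it into $A_{\inf,E}[1/\mu]$ (Frobenius is an automorphism of $W_{\O_E}(C^\flat)$, so $\varphi_q^n(A)$ is just another matrix over $W_{\O_E}(C^\flat)$). The classical proof in \cite[Lemma 4.26]{BMS1} is \emph{not} an explicit Witt-vector computation of this iterated product; it proceeds by first performing a \emph{Tate twist reduction} so that $\varphi_M^{-1}$ carries $M$ into $M$, and only then runs a (different) Frobenius argument which genuinely relies on this effectiveness.

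This twist is precisely the missing idea in your proposal, and it is the only part the paper actually spells out. One introduces the rank-one object $A_{\inf,E}\{1\}$ with basis $e$ and $\varphi_q(e)=\widetilde{\xi}^{\,-1}e$, checks directly that its \'etale realization is $\O_E\cdot\mu e$ (so $A_{\inf,E}\{1\}$ already satisfies \eqref{4.26}), and then replaces $M$ by $M\{n\}:=M\otimes A_{\inf,E}\{1\}^{\otimes n}$ for $n\gg 0$. This forces $\varphi_{M\{n\}}^{-1}(M\{n\})\subseteq M\{n\}$, and since $T(-)$ is tensor-compatible and $A_{\inf,E}\{n\}$ satisfies \eqref{4.26}, the claim for $M$ is equivalent to the claim for $M\{n\}$. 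After this reduction the argument of \cite{BMS1} goes through verbatim. Without the twist, your iteration has no leverage.
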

\begin{proof}
The proof is similar to that of \cite[Lemma 4.26]{BMS1}. In fact, we will only explain the reduction to the case $\varphi_M^{-1}$ maps $M$ into $M$; the rest of the argument is identical to that in \textit{loc. cit.} For this, we need a suitable variant of the usual Tate twist. Let $A_{\inf,E}\{1\}\in \vect(\Prism_{\O_C})$ be the rank one object with a basis $e$, and $\varphi_q(e)=\tfrac{1}{\widetilde{\xi}}e$. For an integer $n$, we set $M\{n\}:=M\otimes A_{\inf,E}\{1\}^{\otimes n}$. The \' etale realization of $A_{\inf,E}\{1\}$ is 
    \begin{align*}
        \{xe\;|\;x\in W_{\O_E}(C^\flat), \varphi_q(x)=\widetilde{\xi}x\}=\{xe\;|\;\varphi_q(x/\mu)=x/\mu\}
        =\O_E\mu e.
    \end{align*}
    In particular, we see that the module $A_{\inf,E}\{n\}$ satisfies \ref{4.26} for any integer $n$. Now pick $n\gg 0$ so that $\varphi_M^{-1}(M)\subseteq \tfrac{1}{\widetilde{\xi}^{n}}M$. Then $\varphi_{M}^{-1}(M\{n\})\subseteq M\{n\}$, and we can replace $M$ by this $M\{n\}$ (as the functor $M\mapsto T(M)$ is tensor-compatible in $M$).
\end{proof}
\subsection{Full faithfulness}\label{full faithfulness of etale realization}
We now prove the full faithfulness of the \' etale realization function in Theorem \ref{main thm body}. With everything in place, we can follow the proof of the corresponding statement in \cite{prismatic}.
\begin{proof}[Proof of full faithfulness in Theorem \ref{main thm body}]
Fix a Breuil--Kisin prism $(\fkS_E,I)$ in $(\O_K)_{\Prism,\O_E}$; this gives a cover of the final object in the associated topos. In particular, faithfulness of the \' etale realization functor $T$ on $X$ reduces to the analogous statement over $(\fkS_E,I)$, which in turn follows from injectivity of the map $\fkS_E\to \fkS_E[1/I]^{\wedge}_{\pi}$. The same argument also shows that $T$ is faithful over $\fkS_E^{(1)}$\footnote{By construction of $\fkS_E^{(1)}$ as a suitable prismatic envelope (namely $\fkS_E^{(1)}=W_{\O_E}(k)[[u,v]]\{(u-v)/E(u)\}^{\wedge}_{(\pi,E(u))}$), $\fkS_E^{(1)}$ is $(\pi,I)$-completely flat over $\fkS_E$ (via either structure map), cf. \cite[Proposition 2.6.6]{ito2023prismatic}. As $\fkS_E$ is transversal, the same holds for $\fkS_E^{(1)}$ (see e.g. \cite[Lemma 4.7]{BMSII}). In fact, as $\fkS_E$ is Noetherian, $\fkS_E^{(1)}$ is even classically flat over $\fkS_E$ (see e.g. \cite[Proposition 2.2.4]{EG22}), but we will not need this.}, the self-coproduct of $\fkS_E$ with itself in $(\O_K)_{\Prism,\O_E}$. 

Fullness of $T$ now reduces formally to the analogous statement over $\fkS_E$, i.e. fullness of the base change functor 
\begin{displaymath}
    \vect(\fkS_E)\to \vect(\fkS_E[1/I]^{\wedge}_{\pi}),
\end{displaymath}
which is Proposition \ref{fully faithful etale realization Kisin}. (Indeed, given prismatic $F$-crystals $\M$ and $\N$ and a map $g: \M[1/I]^{\wedge}_{\pi}\to \N[1/\I]^{\wedge}_{\pi}$ in $\mathrm{Vect}^{\varphi_q}(X_{\Prism},\O_{\Prism}[1/\I_{\Prism}]^{\wedge}_{\pi})$, we need to extend $g$ to a map $\M\to \N$. First, by fullness over $\fkS_E$ (i.e. Proposition \ref{fully faithful etale realization Kisin}), the map $g(\fkS_E)$
extends (necessarily uniquely) to a map $f': \M(\fkS_E)\to \N(\fkS_E)$. The map $f'$ induces two a priori distinct maps
$a; b: \M(\fkS_E^{(1)})\to \N(\fkS_E^{(1)})$ via extending along either structure map, and we need to show that $a = b$ (for then $f'$ will lift to the desired map $\M\to \N$). But by faithfulness over $\fkS_E^{(1)}$, it suffices to show that $a[1/I]^{\wedge}_{\pi}=b[1/I]^{\wedge}_{\pi}$, which is true as both are equal to $g(\fkS_E^{(1)})$.)
\end{proof}
The following lemma was used above. 
\begin{lem}
    Let $(A,I)$ be a transversal $\O_E$-prism. The natural map $A\to A[1/I]^{\wedge}_{\pi}$ is injective.
\end{lem}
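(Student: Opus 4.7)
The plan is to factor the map as $A \to A[1/d] \to A[1/d]^{\wedge}_{\pi}$, where $d$ denotes a generator of $I$, and to show each piece is injective. Since $I$ is an invertible ideal in the prism $(A,I)$, its generator $d$ is a non-zero-divisor on $A$, so the first arrow is automatically injective. The task therefore reduces to showing that $A[1/d]$ is $\pi$-adically separated.

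Because $d$ becomes a unit after inverting it, one has $A[1/d]^{\wedge}_{\pi} = \varprojlim_n (A/\pi^n)[1/d]$. So if $a \in A$ maps to zero, then for each $n$ there exists $k_n \geq 0$ with $d^{k_n} a \in \pi^n A$. The heart of the argument is then to conclude $a \in \pi^n A$ for every $n$ by iteratively peeling off factors of $d$, which requires $d$-torsion-freeness of $A/\pi^n$ for all $n$. The base case $n=1$ is precisely the transversality hypothesis (in the principal case, $A/I$ being $\pi$-torsion free is equivalent to $A/\pi$ being $d$-torsion free), and the general case follows by a short induction: if $dx \in \pi^n A$, then $dx \in \pi A$ forces $x \in \pi A$, say $x = \pi y$; dividing by $\pi$ yields $dy \in \pi^{n-1}A$, and the inductive hypothesis gives $y \in \pi^{n-1}A$, so $x \in \pi^n A$.

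Having obtained $a \in \bigcap_n \pi^n A$, I would conclude by classical $\pi$-adic separatedness of $A$, which is inherited from the classical $(\pi,I)$-completeness of the bounded prism: the containment $\pi^n A \subseteq (\pi,I)^n A$ forces $\bigcap_n \pi^n A \subseteq \bigcap_n (\pi,I)^n A = 0$. None of these steps poses a real obstacle; the argument is essentially a clean unwinding of the transversality hypothesis, and the only point requiring a small calculation is the elementary induction establishing that each $A/\pi^n$ is $d$-torsion free.
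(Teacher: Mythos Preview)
Your proof is correct and rests on the same two facts the paper uses: $A$ is $\pi$-separated, and transversality makes $d$ a non-zero-divisor on $A/\pi$. The paper compresses this into one line by noting that since the source is $\pi$-separated and the target is $\pi$-torsion free, it suffices to check injectivity modulo $\pi$, where the map becomes $A/\pi \hookrightarrow (A/\pi)[1/d]$; your explicit induction showing each $A/\pi^n$ is $d$-torsion free is just the unwinding of that same iteration.
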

 Here, following the terminology in \cite{prismaticdieudonne}, an $\O_E$-prism $(A,I)$ is called transversal if $A/I$ is $\pi$-torsion free; in this case $A$ is itself $\pi$-torsion free. 
 \begin{proof}
     As the source is $\pi$-adically separated and the target is $\pi$-torsion free, it suffices to show that the map is injective modulo $\pi$, which follows directly from transversality of $(A,I)$.
 \end{proof}
 \begin{remark}\label{full faithfulness via Fargues}
     As in \cite{prismatic}, instead of working with a Breuil--Kisin prism, one can also work directly with the prism $\Prism_{\O_C}$, and deduce the desired full faithfulness from (the easy direction of) Fargues' classification of $F$-crystals over $\vect(\Prism_{\O_C})$.
 \end{remark}
\subsection{Essential surjectivity}\label{essential surjectivity}
The goal of this subsection is to prove the essential surjectivity part of Theorem \ref{main thm body}. As explained in the introduction, the general strategy of our proof follows that of \cite[\textsection 6]{prismatic}; the main difference is that instead of using inputs from \cite{AMMNBeilinson} to prove the desired boundedness of descent data (Proposition 6.10 in \cite{prismatic}), we adapt a key lemma from \cite{duliu}, which will in fact allows us to prove a more general result (Proposition \ref{boundedness descent data}). 

We first collect some further ring-theoretic properties on transversal prisms, which will be used repeatedly in what follows.
\begin{lem}\label{ring theoretic facts}
Let $(A,(d))$ be a transversal $\O_E$-prism.
\begin{itemize}
    \item[(1)] The ring $A\langle d/\pi\rangle [1/\pi]$ is $d$-torsion free and $d$-adically separated.
    \item[(2)] $A\cap d^n A\langle d/\pi\rangle [1/\pi]=d^nA$ and $A\langle d/\pi\rangle\cap d^n A\langle d/\pi\rangle [1/\pi]=(d/\pi)^nA\langle d/\pi\rangle$ for each $n\geq 0$. 
    \item[(3)] For each $n\geq 0$, the natural map 
    \begin{displaymath}
        A[1/\pi]^{\wedge}_d\to A\langle \varphi_q^n(d)/\pi\rangle [1/\pi]^{\wedge}_d
    \end{displaymath}
    is an isomorphism. Moreover, the natural maps $A\to A\langle \varphi^n(d)/\pi\rangle[1/\pi]\to A\langle d/\pi\rangle [1/\pi]$ are injective.
\end{itemize}
\end{lem}
As the proof shows, parts (1) and (2) in fact hold for any pair $(A,d)$ such that: (i) $(\pi,d)$ forms a regular sequence, (ii) $A$ is (classically) $(\pi,d)$-complete; moreover part (3) needs additionally only the underlying $\delta_E$-ring structure (rather than the full prism structure) of $A$.  
\begin{proof}
(1) Recall that $A\langle d/\pi\rangle$ is by definition the $\pi$-adic completion of the $A$-subalgebra $A[d/\pi]$ of $A[1/\pi]$ generated by $d/\pi$. We note in particular that it is $\pi$-torsion free. We claim that the natural map $A[x]/(\pi x-d)\to A[d/\pi], x\mapsto d/\pi$ is an isomorphism. As it is clearly an isomorphism after inverting $\pi$, it suffices to show that the source is $\pi$-torsion free, which in turn follows formally from the facts that $A[x]$ is $\pi$-torsion free and that $\pi x-d=-d$ is regular on $(A/\pi)[x]$. Thus
\begin{displaymath}
    A\langle d/\pi\rangle= A[x]/(\pi x-d)^{\wedge}_p=A\langle x\rangle/J,
\end{displaymath}
where $A\langle x\rangle :=A[x]^{\wedge}_p$ and $J:=\overline{(\pi x-d)}=\cap_{n\geq 0}(\pi x-d,\pi^n)A\langle x\rangle$ is the closure of $(\pi x-d)$ in $A\langle x\rangle$ for the $\pi$-adic topology. We need to show that if $f\in A\langle x\rangle$ satisfies $df\in (\pi x-d,\pi^n)$ for all $n\gg 0$, then the same holds for $f$. Write $df=(\pi x-d)g+\pi^n h$ for some $g,h\in A\langle x\rangle$. As $d(f+g)=\pi(xg+\pi^{n-1}h)$, we can write $f+g=\pi k, xg+\pi^{n-1}h=dk$ for some $k\in A\langle x\rangle$. Now as $A\langle x\rangle /(x,\pi^{n-1})=A[x]/(x,\pi^{n-1})=A/\pi^{n-1}$ is $d$-torsion free, $k\in (x,\pi^{n-1})$, say $k=xa+\pi^{n-1}b$ for some $a,b\in A\langle x\rangle$. Then $x(g-da)=\pi^{n-1}(db-h)$ whence $g-da=\pi^{n-1}a'$ for some $a'\in A\langle x\rangle$ whence $f=\pi k-g=\pi(xa+\pi^{n-1}b)-(da+\pi^{n-1}a')=(\pi x-d)a+\pi^{n-1}(\pi b-a')\in (\pi x-d,\pi^{n-1})$, as wanted.

For the last statement of (1), it suffices to show that
\begin{displaymath}
A\langle d/\pi\rangle\cap \cap_{n\geq 0}d^n A\langle d/\pi\rangle [1/\pi]=0.    
\end{displaymath}
We first check that $A\langle d/\pi\rangle\cap (d/\pi)^nA\langle d/\pi\rangle [1/\pi]=(d/\pi)^n A\langle d/\pi\rangle$. As $d$ (hence $d/\pi$) is regular on $A\langle d/\pi\rangle$ by (1), we may assume $n=1$. We need to show that $A\langle d/\pi\rangle/(d/\pi)$ is $\pi$-torsion free. We claim that $A\langle d/\pi\rangle /(d/\pi)\simeq A/d$ via the natural map. Indeed, the map $A\langle x\rangle \twoheadrightarrow A, x\mapsto 0$ maps $\overline{(\pi x-d)}$ into $dA\subseteq A$ (as $dA$ is closed for the $\pi$-topology on $A$: $A/dA=\mathrm{coker}(A\xrightarrow{\times d}A)$ is derived $\pi$-complete, hence classically $\pi$-complete as it is $\pi$-torsion free by our assumptions on $A$). We thus get a surjection $A\langle x\rangle/(\overline{(\pi x-d}),x)\twoheadrightarrow A/d, x\mapsto 0$. But $A/d=A\langle x\rangle/(\pi x-d,x)$ also surjects naturally onto $A\langle x\rangle/(\overline{(\pi x-d}),x)$ whence $A/d=A\langle x\rangle/(\overline{(\pi x-d}),x)$:
\begin{displaymath}
    \begin{tikzcd}
        A/d\ar[rr, bend right=30, equal] \ar[r,twoheadrightarrow] & A\langle x\rangle/(\overline{(\pi x-d}),x)\ar[r,twoheadrightarrow] & A/d.\\
    \end{tikzcd}
\end{displaymath}
Thus, $A\langle d/\pi\rangle/(d/\pi)=A/d$ is $\pi$-torsion free, as wanted. Note that the equality $A\langle d/\pi\rangle/(d/\pi)=A/d$ also implies that $A\cap (d/\pi) A\langle d/\pi\rangle=dA$. By induction (using that $(\pi,d)$ is a regular sequence), we see that in fact $A\cap (d/\pi)^nA\langle d/\pi\rangle=d^nA$ for all $n\geq 0$. 

We thus need to show that $A\langle d/\pi\rangle$ is $(d/\pi)$-adically separated. Write $\overline{x}:=d/\pi$. Assume $f\in \cap_{n\geq 0}\overline{x}^nA\langle \overline{x}\rangle$. We can write $f=a_0+a_1\overline{x}+\ldots$ for a (not necessarily unique) sequence $(a_n)$ in $A$ with $a_n\to 0$ $\pi$-adically. In particular, $a_0\in \overline{x}A\langle \overline{x}\rangle\cap A =dA$ (by the preceding paragraph), say $a_0=da_0'=\pi a_0'\overline{x}$. As $\overline{x}$ is regular in $A\langle \overline{x}\rangle$, we still have $f/\overline{x}=(\pi a_0'+a_1)+\ldots\in \cap_{n\geq 0}(\overline{x}^n)$. Similarly we find that $a_1+\pi a_0'\in (\overline{x})\cap A=dA$, say $a_1+\pi a_0'=da_1'$. Next, we have $0=(\pi\overline{x}-d)(a_0'+a_1'\overline{x})$, so $a_0+a_1\overline{x}=da_0'+(da_1'-\pi a_0')\overline{x}=\pi a_1'\overline{x}^2$. Again, as $\overline{x}$ is regular, we deduce that $a_2+\pi a_1'\in dA$ say $a_2+\pi a_1'=da_2'$. Repeating this argument, we can inductively find a sequence $(a_n')$ in $A$ such that $a_n=da_n'-\pi a_{n-1}'$ for all $n\geq 0$. We claim that the sequence $(a_n')$ also tends to $0$ $\pi $-adically. Once this is done, $a_0'+a_1'\overline{x}+\ldots$ makes sense as an element in $A\langle\overline{x}\rangle$ and we have $f=a_0+a_1\overline{x}+\ldots=(d-\pi\overline{x})(a_0'+a_1'\overline{x}+\ldots)=0$, as wanted. We will show by induction on $n$ that $a_m'\in \pi^nA$ for all $m\gg 0$ (depending on $n$). So assume that $a_m'\in \pi^nA$ for all $m\geq m_0$. Enlarging $m_0$ if necessary, we may also assume that $a_m\in \pi^{n+1}A$ for all $m> m_0$ (as $a_m\to 0$ $\pi$-adically by assumption). Then $da_m'=a_m+\pi a_{m-1}'\in \pi^{n+1}A$, and so, as $(\pi,d)$ is a regular sequence, $a_m'\in \pi^{n+1}A$ for all $m>m_0$, as claimed.

(2) This is contained in the proof of (1) above.

(3) For the first statement, as the sources and target are both $d$-complete and $d$-torison free (by part (1) as $A\langle \varphi_q^n(d)/\pi\rangle =A\langle d^{q^n}/\pi\rangle$), it suffices to show that the map 
\begin{displaymath}
    A[1/\pi]/d^{q^n}\to A\langle d^{q^n}/\pi\rangle[1/\pi]/d^{q^n}
\end{displaymath}
is an isomorphism, which in turn follows from the proof of (1). The second statement can be proved similarly (i.e. by reducing modulo $d$ (or $d^{q^n}$)).
\end{proof}
The following variant of \cite[Lemma 2.2.10]{duliu} records a simple effect of $\varphi_q$ on the $d$-adic filtration of $A\langle d/\pi\rangle$. 
\begin{lem}\label{lemma phi(Fil) Du-Liu}
    Let $(A,(d))$ be a transversal $\O_E$-prism. Then given any $h\geq 0$, 
    \begin{displaymath}
        \varphi_q(d^m A\langle d/\pi\rangle)\subseteq A+d^{m+h}A\langle d/\pi\rangle
    \end{displaymath}
    for all $m\gg 0$ (depending only on $h$).
\end{lem}
\begin{proof}
We will show more generally that
\begin{displaymath}
    \varphi_q(d^m A\langle d/\pi\rangle)\subseteq A+\dfrac{d^{q(m+1)}}{\pi}A\langle d^q/\pi\rangle\quad\text{for all $m\geq 0$}.
\end{displaymath}
This clearly implies the lemma. Write $\varphi_q(d)=d^q+\pi a$ for some $a\in A$. As $\varphi_q(A\langle d/\pi\rangle)\subseteq A\langle d^q/\pi\rangle$, it suffices by the binomial theorem to show that
\begin{displaymath}
    d^{q(m-k)}\pi^k A\langle d^q/\pi\rangle \subseteq A+\dfrac{d^{q(m+1)}}{\pi}A\langle d^q/\pi\rangle\quad \text{for any $0\leq k\leq m$}.
\end{displaymath}
This follows immediately from the inclusion $A\langle d^q/\pi\rangle \subseteq (1/\pi^{k})A+(d^q/\pi)^{k+1}A\langle d^q/\pi\rangle$.

\end{proof}
\begin{prop}\label{crucial du-liu}
Let $(A,(d))$ be a transversal $\O_E$-prism. Let $h\geq 0$. Assume
\begin{displaymath}
 d^hY=B\varphi_q(Y)C   
\end{displaymath} 
with matrices $Y\in M_d(A\langle d/\pi\rangle [1/\pi])$ and $B, C\in M_d(A)$. Then $Y\in M_d(A[1/\pi])$.
\end{prop}
\begin{proof}
We will follow the proof of \cite[Proposition 2.2.11]{duliu}\footnote{It is not clear to us if the various rings in \cite{duliu} indeed agree with the more standard rings denoted by the same notation; for instance, we do not know if the ring $A^{(2)}_{\max}$ there equals literally to $A^{(2)}\langle I/ p\rangle$. Nevertheless, it is relatively straightforward to adapt the arguments of \textit{loc. cit.} to the present setting. (In an updated version of \cite{duliu}, this point is discussed in Remark 2.2.11.)}. Replacing $Y$ by $\pi^kY$ for some $k\gg 0$, we may assume that
\begin{displaymath}
    Y=Y_{m_0}+Y'
\end{displaymath}
for some $Y_{m_0}\in M_d(A)$ and $X\in M_d(d^{m_0}R)$, where $m_0$ satisfies $\varphi_q(d^m R)\subseteq A+d^{m+1+h}R$ for all $m\geq m_0$ ($m$ exists by the previous lemma); here $R:=A\langle d/\pi\rangle$. We will show that $Y\in M_d(A)$. The idea is to approximate $Y$ in the $d$-adic topology on $R$ by elements of $A$. More precisely, we will construct inductively a sequence $(Y_m)_{m\geq m_0}$ in $M_d(A)$ with the property that $Y_{m+1}\equiv Y_m\bmod{d^m A}$ and $Y_{m}\equiv Y\bmod{d^m R}$ for all $m\geq m_0$. As $A$ and $R$ are both $d$-adically complete (note that $R$ is $\pi$-adically complete with $\pi|d$), this implies that $Y\in M_d(A)$, as wanted.

Assume $Y_m$ has been constructed. Write $Y=Y_m+X$ with $X\in M_d(d^{m}R)$. By assumption,
\begin{displaymath}
    d^h(Y_m+X)=B\varphi_q(Y_m)C+B\varphi_q(X)C.
\end{displaymath}
By our choice of $m_0$ we can write $B\varphi_q(X)C= Z+d^{h} X'$ for some $Z\in M_d(A)$ and $X'\in M_d(d^{m+1}R)$. Then $B\varphi_q(Y_m)C+Z-d^h Y_m=d^h(X-X')$ has entries in $A\cap d^{h+m}R=d^{h+m}A$ by Lemma \ref{ring theoretic facts} (2), say $d^{h+m}X''$ with $X''\in M_d(A)$. Again, as $d$ is regular on $R$, we obtain $X-X'=d^{m}X''$. Now set $Y_{m+1}:=Y_m+d^m X''$. 
\end{proof}
\begin{remark}
Note that one cannot weaken the conditions on $B,C$ into $B,C\in M_d(A[1/\pi])$. For instance, the infinite product
\begin{displaymath}
    \lambda:=\prod_{n\geq 0}\varphi^n(E(u)/E(0))\in \O
\end{displaymath}
satisfies $\lambda=(E(u)/E(0))\varphi(\lambda)\in \fkS[1/p]\cdot \varphi(\lambda)$, but $\lambda\notin \fkS[1/p]$\footnote{For instance, take $K=\mathbf{Q}_p$ and $\pi_K=-p$. Then $E(u)=u+p$ and the coefficient of $u^{1+p+\ldots+p^{n-1}}$ in $\lambda=\prod_{n\geq 0}(1+u^{p^n}/p)$ is $1/p^n$ and of course one can make $n$ arbitrarily large.}. 
\end{remark}
\begin{prop}\label{boundedness descent data}
Let $(A,(d))$ be a transversal $\O_E$-prism. Then the base change
\begin{displaymath}
    \mathrm{Vect}^{\varphi_q}(A)[1/\pi]\to \mathrm{Vect}^{\varphi_q}(A\langle d/\pi\rangle [1/\pi])
\end{displaymath}
is fully faithful; here the source denotes the isogeny category of $\vect(A)$.
\end{prop}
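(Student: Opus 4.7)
The plan is to reduce the statement to a matrix identity to which Proposition~\eqref{crucial du-liu} applies directly. For any $\fM,\fN\in \vect(A)$, the internal Hom $\fkH:=\mathrm{Hom}_A(\fM,\fN)\simeq \fM^\vee\otimes_A \fN$ inherits a canonical $\varphi_q$-module structure over $A$, and full faithfulness of the base change functor translates into the bijectivity of
\[
\fkH[1/\pi]^{\varphi_q=1}\longrightarrow (\fkH\otimes_A A\langle d/\pi\rangle[1/\pi])^{\varphi_q=1}
\]
for every $\fkH\in \vect(A)$. By the usual trick of completing to free modules, one may further assume $\fM$ and $\fN$ are finite free, of ranks $m$ and $n$, so that their Frobenius structures are encoded by matrices $\Phi_\fM\in \mathrm{GL}_m(A[1/d])$ and $\Phi_\fN\in \mathrm{GL}_n(A[1/d])$.

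Faithfulness follows at once from the injectivity of the ring map $A[1/\pi]\hookrightarrow A\langle d/\pi\rangle[1/\pi]$, which in turn holds because $A[1/\pi]$ embeds into $A[1/\pi,1/d]$ (by transversality, $A$ is $(\pi,d)$-regular), while $A\langle d/\pi\rangle[1/\pi]$ is $d$-torsion free and $d$-separated by Lemma~\eqref{ring theoretic facts}(1), hence also embeds into $A[1/\pi,1/d]$. For fullness, a morphism between the base changes is, in the chosen bases, represented by a matrix $F\in M_{n,m}(A\langle d/\pi\rangle[1/\pi])$ satisfying
\[
F\cdot \Phi_\fM=\Phi_\fN\cdot \varphi_q(F),\qquad\text{i.e.}\qquad F=\Phi_\fN\cdot \varphi_q(F)\cdot \Phi_\fM^{-1}.
\]
Choosing $N\gg 0$ so that $d^N\Phi_\fN$ and $d^N\Phi_\fM^{-1}$ both have entries in $A$, and multiplying through by $d^{2N}$, yields an identity of the shape
\[
d^{2N}F=B\cdot \varphi_q(F)\cdot C,\qquad B\in M_n(A),\quad C\in M_m(A),
\]
which is precisely the hypothesis of Proposition~\eqref{crucial du-liu} (after padding to square matrices if needed, which does not affect the conclusion). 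The proposition then forces $F\in M_{n,m}(A[1/\pi])$. Writing $F=\pi^{-k}G$ with $G\in M_{n,m}(A)$, the equivariance relation $G\cdot \Phi_\fM=\Phi_\fN\cdot \varphi_q(G)$ persists in $M_{n,m}(A[1/d,1/\pi])$, and therefore already in $M_{n,m}(A[1/d])$ since $A[1/d]$ is $\pi$-torsion free. Thus $G$ defines a bona fide morphism $\fM\to \fN$ in $\vect(A)$ whose image in the isogeny category recovers the given morphism.

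The main obstacle is entirely upstream of this deduction: all the real content lies in Proposition~\eqref{crucial du-liu}, together with the Frobenius-contraction estimate of Lemma~\eqref{lemma phi(Fil) Du-Liu}. Once those are in hand, Proposition~\eqref{boundedness descent data} follows from the formal matrix manipulation above combined with the ring-theoretic facts collected in Lemma~\eqref{ring theoretic facts}.
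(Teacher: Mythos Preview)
Your proof is correct and follows essentially the same route as the paper: reduce to free modules, translate a $\varphi_q$-equivariant map into a matrix equation $Y A_1 = A_2\,\varphi_q(Y)$, clear $d$-denominators to put it in the form $d^h Y = B\,\varphi_q(Y)\,C$ with $B,C$ over $A$, and invoke Proposition~\eqref{crucial du-liu}. Your remark about padding to square matrices is a nice bit of hygiene (Proposition~\eqref{crucial du-liu} is stated for square $Y$, though its proof works verbatim for rectangular matrices), and your explicit verification that the resulting $G\in M_{n,m}(A)$ is $\varphi_q$-equivariant is slightly more careful than the paper, which simply notes that uniqueness and equivariance are automatic from the injectivity in Lemma~\eqref{ring theoretic facts}~(3).
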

\begin{proof}
    Given objects $\fM_i$ in $\vect(A)$, we need to show that any $\varphi_q$-equivariant map 
\begin{displaymath}
    \alpha: \fM_1\otimes_A A\langle d/\pi\rangle [1/\pi]\to \fM_2\otimes_A A\langle d/\pi\rangle [1/\pi]
\end{displaymath}
extends to a map $\fM_1[1/\pi]\to \fM_2[1/\pi]$. (By injectivity of $A[1/\pi]\to A\langle d/\pi\rangle [1/\pi]$ (Lemma \ref{ring theoretic facts} (3)), such an extension is necessarily uniquely and $\varphi_q$-equivariant.) By Lemma \ref{direct summand of free} below $\fM_i$ can be written as a $\varphi_q$-stable direct summand of some \textit{finite free} $\varphi_q$-module over $A$. We may thus reduce to the case $\fM_1$ and $\fM_2$ are both finite free. 

Pick an $A$-basis $e_1,\ldots,e_{d_1}$ of $\fM_1$, and let $A_1\in M_{d_1}(A[1/d])$ be the matrix giving the action of $\varphi_{\fM_1}$ on this basis, i.e. $\varphi_{\fM_1}(e_1,\ldots,e_{d_1})=(e_1,\ldots,e_{d_1})A_1$; similarly let $A_2\in M_{d_2}(A[1/d])$ be the matrix giving the action of $\varphi_{\fM_2}$ on some fixed basis of $\fM_2$. As $\alpha$ is $\varphi$-equivariant, we see that if $Y\in M_{d_1d_2}(A\langle d/\pi\rangle [1/\pi])$ denotes the matrix of $\alpha$ relative to the chosen bases, then 
\begin{displaymath}
    YA_1=A_2\varphi_q(Y).
\end{displaymath}
We need to show that $Y$ in fact has entries in $A[1/\pi]$. As $A_1$ is invertible, we can write the above equation as $d^h Y=B\varphi_q(Y)C$ for some $h\geq 0$, and matrices $B, C$ with entries in $A$. Then by Proposition \ref{crucial du-liu} below, $Y$ has entries in $A[1/\pi]$, as wanted. 
\end{proof}
The following lemma was used above.
\begin{lem}\label{direct summand of free}
Let $(A,(d))$ be an $\O_E$-prism. Any object in $\vect(A)$ can be realized as a $\varphi_q$-stable direct summand of some finite free object.
\end{lem}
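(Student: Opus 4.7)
\emph{Plan of proof.} The plan is to produce a finite free $F$-crystal $(P, \varphi_P) \in \vect(A)$ containing $(\fM, \varphi_{\fM})$ as a $\varphi_q$-stable direct summand. The naive attempt --- pick a finite projective $\fN$ with $\fM \oplus \fN \cong A^n$ and equip $\fN$ with its own $\varphi_q$-structure --- runs into the obstruction that there is in general no reason for $\varphi_q^*\fN[1/d]$ and $\fN[1/d]$ to be isomorphic as $A[1/d]$-modules, so $\fN$ need not underlie an object of $\vect(A)$. A doubling trick circumvents this.

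After choosing $\fN$ as above, I would take $P := A^n \oplus A^n$ and seek a $\varphi_q$-structure of the form $\varphi_P = \varphi_{\fM} \oplus \psi$ relative to the decomposition $P = \fM \oplus (\fN \oplus A^n)$; such a $\varphi_P$ would immediately exhibit $\fM$ as a $\varphi_q$-stable summand of $P$. The role of $\psi$ is to supply an isomorphism $(\varphi_q^*\fN \oplus A^n)[1/d] \xrightarrow{\sim} (\fN \oplus A^n)[1/d]$, and such a $\psi$ can be built out of nothing more than the two $A$-module decompositions $A^n = \fM \oplus \fN$ and $A^n = \varphi_q^*\fM \oplus \varphi_q^*\fN$ (the latter obtained by applying $\varphi_q^*$ to the former) together with $\varphi_{\fM}$: concretely, by chaining
$$(\varphi_q^*\fN \oplus A^n)[1/d] \simeq (\varphi_q^*\fN \oplus \fM \oplus \fN)[1/d] \xrightarrow{\mathrm{id} \oplus \varphi_{\fM}^{-1} \oplus \mathrm{id}} (\varphi_q^*\fN \oplus \varphi_q^*\fM \oplus \fN)[1/d] \simeq (\fN \oplus A^n)[1/d].$$

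The mechanism underlying the construction is the observation, essentially a $K_0(A[1/d])$ calculation, that the existence of $\varphi_{\fM}$ forces $[\fN[1/d]] = [\varphi_q^*\fN[1/d]]$, so $\fN[1/d]$ and $\varphi_q^*\fN[1/d]$ become isomorphic after stabilization by $A[1/d]^n$. I do not anticipate any serious obstacle beyond recognizing that stabilization (rather than a direct isomorphism of complements) is what is needed; the remainder of the argument is bookkeeping, and the construction makes no use of the prism structure beyond the definition of $\vect(A)$.
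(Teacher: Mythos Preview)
Your proposal is correct and is essentially the same doubling trick used in the paper: in the paper's notation $\fF = A^n$, the complement of $\fM$ in $\fF\oplus\fF$ is $\fF\oplus\fN$ (your $\fN\oplus A^n$), and the paper equips it with a $\varphi_q$-structure by the same chain $\varphi_q^*\fF\oplus\varphi_q^*\fN\overset{\varphi_{\fF}}\simeq \fM\oplus\fN\oplus\varphi_q^*\fN\overset{\varphi_{\fM}^{-1}}\simeq \varphi_q^*\fF\oplus\fN\overset{\varphi_{\fF}}\simeq \fF\oplus\fN$, which matches your $\psi$ after reordering summands. Your $K_0$ remark is a nice bit of motivation the paper does not make explicit.
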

\begin{proof}
The proof is similar to that of \cite[Lemma 4.3.1]{EG22}. Fix $\fM\in \vect(A)$. Pick an $A$-module $\fN$ so that $\fF:=\fM\oplus \fN$ is finite free. Then $(\fF\oplus \fN)\oplus \fM=\fF\oplus \fF$ is finite free, and by fixing an isomorphism $\varphi_F: \varphi_q^*\fF\simeq \fF$, one can endow $\fF\oplus \fN$ with a $\varphi_q$-structure via the composition
\begin{align*}
    \varphi_q^*(\fF\oplus \fN)[1/d] &\overset{\varphi_{\fF}}{\simeq}\fF[1/d]\oplus \varphi_q^*\fN[1/d]=\fM[1/d]\oplus \fN[1/d]\oplus \varphi_q^*\fN[1/d]\\ & \overset{\varphi_{\fM}^{-1}}{\simeq}\varphi_q^*\fM[1/d]\oplus \fN[1/d]\oplus \varphi_q^*\fN[1/d]=\varphi_q^*\fF\oplus \fN[1/d] \\
    &\overset{\varphi_{\fF}}{\simeq} (\fF\oplus \fN)[1/d].
\end{align*}
\end{proof}
\begin{proof}[Proof of essential surjectivity in Theorem \ref{main thm body}]
Fix $T\in \mathrm{Rep}_{\O_E}^{\mathrm{cris}}(G_K)$. Let $D\in \mf(K)$ be the weakly admissible filtered $\varphi_q$-module over $K$ corresponding to $T[1/\pi]$ under the equivalene in Theorem \ref{weakly admissible vs crys rep}. Fix a Breuil--Kisin prism $(\fkS_E,I)$ in $X_{\Prism}$. We note firstly that the functor $\M: \mf(K)\to \vect(\fkS_E\langle I/\pi\rangle [1/\pi])$ from \textsection \ref{construction kisin 06} in fact lifts to a functor
\begin{align}
    \M: \mf(K)\to \vect(X_{\Prism},\O_{\Prism}\langle \I_{\Prism}/\pi\rangle [1/\pi])
\end{align}
by applying exactly the same construction for each object in $X_{\Prism}$ (cf. \cite[Construction 6.5]{prismatic}\footnote{Keep in mind that in defining the constant $F$-crystal $D\otimes_{W_{\O_E}(k)}\O_{\Prism}\langle \I_{\Prism}/\pi\rangle [1/\pi]$, we use the canonical $W_{\O_E}(k)$-algebra structure on objects of $(\O_K)_{\Prism,\O_E}$; see Remark \ref{algebra structure Wk}.}). Moreover, we have seen in \textsection \ref{weak admissibility and slope 0} that the weak admissibility of $D$ implies that $\M(D)$ extends to an object $\fM\in \vect(\fkS_E)$, which therefore comes equipped with a descent datum
\begin{displaymath}
    \alpha:\fM\otimes_{\fkS_E,p_1}\fkS_E^{(1)}\langle I/\pi\rangle [1/\pi]\simeq \fM\otimes_{\fkS_E,p_2}\fkS_E^{(1)}\langle I/\pi\rangle [1/\pi]
\end{displaymath}
by the last sentence. Proposition \ref{boundedness descent data} then shows that $\alpha$ extends uniquely to a descent datum
\begin{displaymath}
    \alpha: \fM\otimes_{\fkS_E,p_1}\fkS_E^{(1)}[1/\pi]\simeq \fM\otimes_{\fkS_E,p_2}\fkS_E^{(1)}[1/\pi];
\end{displaymath}
in other words, $\fM[1/\pi]$ lifts naturally to an object in $\vect(X_{\Prism},\O_{\Prism}[1/\pi])$. The rest of the arguments in \cite[\textsection 6.4]{prismatic} now carry over to our setting (applied to the object $\M':=\fM\otimes_{\fkS}\Prism_{\O_C}\in \vect(\Prism_{\O_C})$), finishing the proof\footnote{We can also argue slightly differently as follows. Namely, we first check that $T(\fM)[1/\pi]\simeq T[1/\pi]|_{G_{K_{\infty}}}$, whence by Lemma \ref{beauville laszlo glueing and extending vbs}, we may pick $\fM$ uniquely so that $T(\fM)\simeq T|_{G_{K_{\infty}}}$; here $K_{\infty}$ as usual denotes the Kummer extension $K(\pi_K^{1/q^{\infty}})$ of $K$. But as $\fM[1/\pi]$ lifts to an $F$-crystal on $\O_K$, its ``\' etale realization'' $T(\fM)[1/\pi]\in \mathrm{Vect}(E)$ carries a $G_K$-action (extending the natural $G_{K_{\infty}}$-action), which is in fact $E$-crystalline by exactly the same argument as in the proof of Theorem \ref{main thm body}. By full faithfulness of the restriction functor $\mathrm{Rep}_E^{\mathrm{cris}}(G_K)\to \mathrm{Rep}_E(G_{K_{\infty}})$, we deduce that $T(\fM)[1/\pi]\simeq T[1/\pi]$ $G_K$-equivariantly. Thus, we have arranged so that $\fM\otimes \Prism_{\O_C}[1/I]^{\wedge}_{\pi}$ is stable under the $G_K$-action on $\fM\otimes \Prism_{\O_C}[1/I]^{\wedge}_{\pi}[1/\pi]$ (coming from the descent datum $\alpha$). Now we can simply follow the arguments in the fourth paragraph of \cite[\textsection 6.4]{prismatic} to conclude (the difference is that we do not need to run another modification as in \textit{loc. cit.}: the resulting prismatic $F$-crystal has \' etale realization $T$ on the nose).

}. 
\end{proof}
\subsection{Relation with Kisin--Ren's theory \texorpdfstring{\cite{KisinRen}}{}}\label{relation kisin--ren}
In this subsection, we show that Theorem \ref{main thm body} encodes the classification of Galois stable lattices in $E$-crystalline representations in \cite{KisinRen} upon specializing to a suitable prism in $(\O_K)_{\Prism}$, in the same way that it encodes the theory of Breuil--Kisin theory in \cite{Kis06} by specializing to a Breuil--Kisin prism.

Let $\G$ be the Lubin--Tate formal $\O_E$-module over $\O_E$ corresponding to a uniformizer $\pi\in E$. Pick a coordinate $X$ for $\G$, i.e. an isomorphism $G\simeq \Spf(\O_E[[X]])$. For $a\in \O_E$, denote by $[a]\in \O_E[[X]]$ the power series giving the action of $a$ on $\G$. Let $K_{\infty}\subseteq \overline{K}$ be the subfield generated by the $\pi$-power torsion points of $\G$ and write $\Gamma:=\mathrm{Gal}(K_{\infty}/K)$. The Tate module $T_{p}\G$ is a free $\O_E$-module of rank one, and the action of $G_K$ on $T_p\G$ is given by the Lubin--Tate character $\chi: G_K\to \O_E^\times$. 

Fix a generator $v=(v_n)_{n\geq 0}$ of $T_{p}\G$. As in \cite{KisinRen}, we will assume in what follows that $K\subseteq K_{0,E}K_{\infty}$. Fix $m\geq 1$ so that $K\subseteq K_{0,L}(v_m)$. Let $Q(u):=[\pi^m](u)/[\pi^{m-1}](u)$. As before, write $\fkS_E:=W_{\O_E}(k)[[u]]$; however we now equip $\fkS_E$ with a $\varphi_q$-action and a $\Gamma$-action given respectively by $\varphi_q(u):=[\pi](u)$ and $\gamma(u)=[\chi(\gamma)](u)$ for $\gamma\in \Gamma$.

It is easy to check that the preceding $\varphi_q$-action makes the pair $(\fkS_E,(Q(u)))$ into an $\O_E$-prism. Moreover, as the map $\fkS_E\twoheadrightarrow \O_{K_{0,E}(v_m)}, u\mapsto v_m$ is surjective with kernel $(Q(u))$, our assumption on $m$ gives a map $\O_K\to \fkS_E/(Q(u))$, making $(\fkS_E,(Q(u))$ into an $\Gamma$-equivariant object of $(\O_K)_{\Prism,\O_E}$, which we will denote by $\fkS_E'$ to distinguish with the Breuil--Kisin prism $\fkS_E$ introduced earlier. Note also that mapping $u\mapsto \varphi_q^{-(m-1)}([v]_{\G})$ defines a $G_K$-equivariant map $\fkS_E'\to \Prism_{\O_C}$ in $(\O_K)_{\Prism,\O_E}$. In particular, $\fkS_E'$ again gives a cover of the final object in the associated topos. 
\begin{defn}
Let $\mathrm{Vect}^{\varphi_q,\Gamma}(\fkS_E')$ denote the category of $\fM\in \vect(\fkS_E')$ equipped with a semilinear action of $\Gamma$ which commutes with $\varphi_q$, and such that $\Gamma$ acts trivially on $\fM/u\fM$. Inside this, we have a full subcategory $\mathrm{Vect}^{\varphi_q,\Gamma,\mathrm{an}}(\fkS_E')$ consisting of objects for which the $\Gamma$-action is analytic in a suitable sense; see \cite[\textsection (2.1.3)]{KisinRen} and \cite[\textsection (2.4.3)]{KisinRen}. 
\end{defn}
\begin{prop}
    Consider the functor 
    \begin{displaymath}
        D_{\fkS_E'}: \mathrm{Rep}_{\O_E}^{\mathrm{cris}}(G_K)\to \mathrm{Vect}^{\varphi_q,\Gamma}(\fkS_E')
    \end{displaymath}
    defined by composing the inverse of the equivalence in Theorem \ref{main thm body} with the evaluation  at $(\fkS_E',(Q(u)))\in (\O_K)_{\Prism,\O_E}$. Then $D_{\fkS_E'}$ is fully faithful with essential image $\mathrm{Vect}^{\varphi_q,\Gamma,\mathrm{an}}(\fkS_E')$.
\end{prop}
\begin{proof}
We first check that $D_{\fkS_E'}$ is well-defined, i.e. given any object $\E\in \vect((\O_K)_{\Prism},\O_{\Prism})$, the value $\E(\fkS_E')$ is naturally an object in $\mathrm{Vect}^{\varphi_q,\Gamma}(\fkS_E')$. As $\fkS_E'$ is a $\Gamma$-equivariant object in $(\O_K)_{\Prism,\O_E}$, $\fM:=\E(\fkS_E')$ carries a natural $\Gamma$-action. Now the map $\fkS_E'\to \Prism_{\O_C}$ induces by reducing modulo $u$ a $G_K$-equivariant map
\begin{displaymath}
    (W_{\O_E}(k),(\pi))\to (W_{\O_E}(\overline{k}),(\pi))
\end{displaymath}
in $(\O_K)_{\Prism,\O_E}$; as $W_{\O_E}(k)$ is fixed by the natural $G_K$-action on $W_{\O_E}(\overline{k})$, the crystal property of $\E$ again implies that $\Gamma$ acts trivially on $\E(W_{\O_E}(k))\simeq \fM/u\fM$, as wanted. 

Next, we show that $D_{\fkS_E'}$ lands in the subcategory of analytic objects; by the very definition of the latter (\cite[\textsection (2.4.3)]{KisinRen}), it suffices to prove the analogous statement for the composition 
\begin{align*}
      \mathrm{Rep}_{\O_E}^{\mathrm{cris}}(G_K)[1/\pi]\overset{D_{\fkS_E'}[1/\pi]}{\to} \mathrm{Vect}^{\varphi_q,\Gamma}(\fkS_E')[1/\pi]\to \mathrm{Vect}^{\varphi_q,\Gamma}(\fkS_E'\langle I/\pi\rangle[1/\pi])\simeq \mathrm{Vect}^{\varphi_q,\Gamma}(\O'),
\end{align*}
where $\O'$ again denotes the ring of functions on the rigid open unit disk over $K_{0,E}$, but now equipped with the Frobenius $u\mapsto [\pi](u)$. By unwinding definitions, this coincides (upon identifying $\mathrm{Rep}^{\mathrm{cris}}_{E}(G_K)\simeq \mathrm{MF}^{\varphi_q,w.a}(K)$) with the functor 
\begin{displaymath}
    \M'(\cdot): \mathrm{MF}^{\varphi_q,w.a.}(K)\to \mathrm{Vect}^{\varphi_q,\Gamma}(\O')
\end{displaymath}
from \cite[\textsection (2.2)]{KisinRen} (cf. Remark \ref{compare with Kisin}); in particular, we know from Lemma (2.2.1) of \textit{loc. cit.} that it indeed factors through the subcategory of analytic objects.

Consider now the composition
\begin{displaymath}
\mathrm{Rep}_{\O_E}^{\mathrm{cris}}(G_K)\overset{D_{\fkS_E'}}{\to} \mathrm{Vect}^{\varphi_q,\Gamma,\mathrm{an}}(\fkS_E')\overset{T}{\to} \mathrm{Rep}_{\O_E}(G_K),  
\end{displaymath}
where $T$ again denotes the \' etale realization functor $\fM\mapsto (\fM\otimes_{\fkS_E'}W(\mathbf{C}^\flat))^{\varphi_q=1}$. Unwinding again the construction of $D_{\fkS_E'}$, we see that this is nothing but the forgetful functor. Moreover, by \cite[Corollary (3.3.8)]{KisinRen}, $T$ defines an equivalence onto the subcategory $\mathrm{Rep}_{\O_E}^{\mathrm{cris}}(G_K)$. It follows that $D_{\fkS_E'}$ is also an equivalence, as wanted.
\end{proof}
\subsection{Relation with \texorpdfstring{$\pi$}{pi}-divisible \texorpdfstring{$\O_E$}{OE}-modules over \texorpdfstring{$\O_K$}{OK}}
In this subsection, we combine Theorem \ref{main thm body} with a key result on \textit{minuscule} prismatic $F$-crystals from \cite{prismaticdieudonne} to deduce a classification result for $\pi$-divisible $\O_E$-modules over $\O_K$ (Theorem \ref{classification pi divisible Cheng}).
\begin{defn}[Minuscule Breuil--Kisin modules]
Let $(A,I)$ be an $\O_E$-prism. An object $M\in \vect(A,I)$ is called minuscule (or effective of height 1) if $\varphi_M$ is induced by a map $\varphi_q^*M\to M$ with cokernel killed by $I$. An object $\E\in \vect(X_{\Prism},\O_{\Prism})$ is called minusucle if for all $(A,I)\in X_{\Prism}$, the value $\E(A)$ is minuscule. Following \cite{prismaticdieudonne}, we denote the resulting categories by $\mathrm{BK}_{\min}(A,I)$ and $\mathrm{DM}(X)$, respectively. 
\end{defn}
\begin{prop}[{\cite{prismaticdieudonne},\cite{ito2023prismatic}}]\label{breuil kisin eqvaluate equivalence}
    Fix a Breuil--Kisin prism $(\fkS_E,I)\in (\O_K)_{\Prism,\O_E}$. Then evaluation at $\fkS_E$ defines an equivalence 
    \begin{displaymath}
        \mathrm{DM}(\O_K)\simeq \mathrm{BK}_{\min}(\fkS_E).
    \end{displaymath}
\end{prop}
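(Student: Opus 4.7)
The strategy is to verify that evaluation at $\fkS_E$ is fully faithful, then establish essential surjectivity by producing a descent datum for any given minuscule Breuil--Kisin module.

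First, since $(\fkS_E,I)$ covers the final object of $\mathrm{Shv}((\O_K)_{\Prism,\O_E})$ (see Example \ref{example OE prisms}), the evaluation functor is automatically fully faithful on the category of all crystals of vector bundles: morphisms descend along the $(\pi,I)$-completely faithfully flat projections from $\fkS_E^{(1)}$ to $\fkS_E$, and the crystal structure supplies the required compatibility. This fullness carries over to $F$-crystals since compatibility with $\varphi_q$ is a closed condition. Furthermore, minusculeness of $\E \in \vect((\O_K)_{\Prism},\O_{\Prism})$ is equivalent to minusculeness of the single value $\E(\fkS_E)$: any other prism $(A,J)$ admits a common cover $(B,JB)$ receiving a map from $\fkS_E$, so that $\E(B) \simeq \E(\fkS_E) \otimes_{\fkS_E} B$ is minuscule, and minusculeness of $\E(A)$ then follows by faithful flatness of $A \to B$.

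For essential surjectivity, let $\fM \in \mathrm{BK}_{\min}(\fkS_E)$. The first step is to produce a descent datum $\alpha \colon p_1^*\fM \simeq p_2^*\fM$ over $\fkS_E^{(1)}$ rationally (i.e.\ after inverting $\pi$) using the main theorem of the paper. Indeed, $\fM[1/\pi]$ defines an object of $\vect(\fkS_E)[1/\pi]$, which by Corollary \ref{fully faithful into Breuil Kisin} (combined with the minusculeness of $\fM$, translating into Hodge--Tate weights in $\{0,1\}$) corresponds to a weakly admissible filtered $\varphi_q$-module $D$ over $K$. By Theorem \ref{weakly admissible vs crys rep} and Theorem \ref{main thm body}, $D$ arises from a prismatic $F$-crystal $\E^{\circ}$ on $\O_K$ in the isogeny category, which furnishes the sought-after rational descent datum.

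The main obstacle is the integral refinement, namely checking that $\alpha$ in fact preserves the integral lattices $p_i^*\fM \subseteq p_i^*\fM[1/\pi]$. The key point, exploited in \cite{prismaticdieudonne} and extended to the $\O_E$-setting in \cite{ito2023prismatic}, is that minuscule Breuil--Kisin modules within a fixed isogeny class are rigid: the $\O_E$-lattice $\fM \subseteq \fM[1/\pi]$ is uniquely determined by its \'etale realization, which is a Galois-stable $\O_E$-lattice in the associated $E$-crystalline representation. Once this rigidity is established, the rational descent datum automatically refines to an integral one, and the cocycle condition on $\fkS_E^{(2)}$ likewise passes from the rational level to the integral level by the same rigidity. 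Alternatively, one can bypass the main theorem and argue directly via the prismatic Dieudonn\'e correspondence of \cite{prismaticdieudonne, ito2023prismatic}, which identifies both categories with the category of $\pi$-divisible $\O_E$-modules over $\O_K$ through the crystalline Dieudonn\'e functor; the hardest step there is the full faithfulness of the Dieudonn\'e functor, which uses a deformation-theoretic argument along the thickening $\O_C \twoheadrightarrow \O_C/\pi$.
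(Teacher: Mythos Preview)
The paper does not give an argument here; it simply cites \cite[Theorem 5.12]{prismaticdieudonne} for $E=\mathbf{Q}_p$ and \cite[Proposition 7.1.1]{ito2023prismatic} for general $E$. Your proposal instead tries to bootstrap from Theorem \ref{main thm body}, which is a genuinely different route.

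Your full faithfulness step contains a real gap. That $(\fkS_E,I)$ covers the final object yields only \emph{faithfulness} of evaluation; fullness is not automatic. Given an $\fkS_E$-linear map $f\colon \E_1(\fkS_E)\to \E_2(\fkS_E)$, one must check that $p_1^*f$ and $p_2^*f$ are intertwined by the crystal isomorphisms over $\fkS_E^{(1)}$, and the phrase ``the crystal structure supplies the required compatibility'' does not explain why an arbitrary $f$ should satisfy this. (A valid repair: the composite $\mathrm{DM}(\O_K)\to \vect(\fkS_E)\to \vect(\fkS_E[1/I]^{\wedge}_{\pi})$ is fully faithful by \S\ref{fully faithfulness of etale realization}, and the second functor is fully faithful by Proposition \ref{fully faithful etale realization Kisin}; hence so is the first.)

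For essential surjectivity your outline is broadly reasonable, but the crux---upgrading the rational descent datum over $\fkS_E^{(1)}[1/\pi]$ to an integral one over $\fkS_E^{(1)}$---is exactly the hard step. You defer it to ``rigidity'' from \cite{prismaticdieudonne} and \cite{ito2023prismatic}, but that rigidity (that a minuscule Breuil--Kisin module over the non-regular ring $\fkS_E^{(1)}$ is determined in its isogeny class by suitable boundary data) is essentially the full content of the cited results over this base; Proposition \ref{fully faithful etale realization Kisin} does not apply since $\fkS_E^{(1)}$ is not a two-dimensional regular local ring. So your argument does not actually avoid the references---it repackages the appeal to them inside a longer detour through Theorem \ref{main thm body}.
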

\begin{proof}
    This is proved in \cite[Theorem 5.12]{prismaticdieudonne} in case $E=\mathbf{Q}_p$ (and for a more general class of rings in place of $\O_K$). The case of general $E$ is then proved in \cite[Proposition 7.1.1]{ito2023prismatic}.
\end{proof}
\begin{thm}[Fontaine, Kisin, Raynaud, Tate]\label{liu}
Sending a $p$-divisible group to its $p$-adic Tate module defines an equivalence 
\begin{align*}
    \mathrm{BT}(\O_K) &\simeq \mathrm{Rep}_{\mathbf{Z}_p}^{\mathrm{cris},\{0,1\}}(G_K)\\
    G &\mapsto T_p(G);
\end{align*}
here the source denotes the category of $p$-divisible groups over $\O_K$, and the target denotes the category of lattices in crystalline $G_K$-representations with Hodge--Tate weights in $\{0,1\}$.
\end{thm}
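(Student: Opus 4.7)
The plan is to derive the theorem, in the case $E = \mathbf{Q}_p$ of all our earlier machinery, by composing two equivalences already at our disposal. Proposition~\eqref{breuil kisin eqvaluate equivalence} identifies $\mathrm{BT}(\O_K)$ with the full subcategory $\mathrm{DM}(\O_K) \subseteq \mathrm{Vect}^{\varphi}((\O_K)_{\Prism}, \O_{\Prism})$ of minuscule prismatic $F$-crystals (via evaluation at a Breuil--Kisin prism), while Theorem~\eqref{main thm body} identifies the ambient category of prismatic $F$-crystals with $\mathrm{Rep}_{\mathbf{Z}_p}^{\mathrm{cris}}(G_K)$. The composition produces a fully faithful functor $\mathrm{BT}(\O_K) \hookrightarrow \mathrm{Rep}_{\mathbf{Z}_p}^{\mathrm{cris}}(G_K)$, and it only remains to (i) identify its essential image with those lattices whose generic fibre has Hodge--Tate weights in $\{0,1\}$, and (ii) check that the resulting functor is naturally isomorphic to $G \mapsto T_p(G)$.

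For step (i), one can work at a chosen Breuil--Kisin prism $(\fkS, I)$. A prismatic $F$-crystal $\mathcal{E}$ is minuscule if and only if its value $\fM := \mathcal{E}(\fkS)$ is a minuscule Breuil--Kisin module, so it suffices to check that under the embedding $\mathrm{MF}^{\varphi,w.a.}(K) \hookrightarrow \mathrm{Vect}^{\varphi}(\fkS)[1/p]$ of Corollary~\eqref{fully faithful into Breuil Kisin}, the objects with minuscule underlying $\varphi$-module correspond exactly to filtered isocrystals with $\mathrm{Fil}^0 D_K = D_K$ and $\mathrm{Fil}^2 D_K = 0$. This follows immediately from the description of the filtration in \textsection\ref{construction kisin 06} as the image of $\varphi_\fM^{-1}(E(u)^{\bullet}\fM)$: the cokernel of $\varphi_\fM$ is killed by $E(u)$ precisely when $\mathrm{Fil}^2 D_K = 0$, which on the Galois side says exactly that the Hodge--Tate weights lie in $\{0,1\}$.

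For step (ii), the key point is to identify the \'etale realization functor, evaluated at Fontaine's prism $\Prism_{\O_C}$, with the classical Tate module functor. By the $A_{\mathrm{inf}}$-version of prismatic Dieudonn\'e theory from \cite{prismaticdieudonne}, the value at $\Prism_{\O_C} = A_{\mathrm{inf}}$ of the prismatic $F$-crystal attached to a $p$-divisible group $G$ is canonically the Breuil--Kisin--Fargues module of $G$; its $\varphi$-invariants after extending scalars to $W(C^\flat)$ recover $T_p(G)$ by the classical comparison between the Tate module of $G$ and its covariant Dieudonn\'e crystal.

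The main obstacle is the bookkeeping in step (i): tracking the (co)variance conventions across the prismatic Dieudonn\'e equivalence, keeping track of the Frobenius twist built into the prism structure on $A_{\mathrm{inf}}$ (see Remark~\eqref{algebra structure Wk}), and making sure the minuscule condition matches the weight set $\{0,1\}$ rather than some shift of it. Once these normalizations are pinned down, the proof reduces to the two essentially formal steps above.
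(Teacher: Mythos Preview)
The paper does not prove this theorem at all: its ``proof'' is a one-line citation (``This is well-known, see e.g.\ \cite[Theorem 2.2.1]{Liupdivisible}''), treating the statement as a classical external input which is then \emph{used} (together with Theorem~\ref{main thm body} and Proposition~\ref{breuil kisin eqvaluate equivalence}) to deduce Theorem~\ref{classification pi divisible Cheng}. Your proposal instead attempts to derive Theorem~\ref{liu} from the paper's own machinery, so the two approaches are entirely different in character.

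There is, however, a genuine gap in your argument. You claim that Proposition~\ref{breuil kisin eqvaluate equivalence} identifies $\mathrm{BT}(\O_K)$ with $\mathrm{DM}(\O_K)$, but it does no such thing: that proposition only says that evaluation at a Breuil--Kisin prism gives $\mathrm{DM}(\O_K)\simeq \mathrm{BK}_{\min}(\fkS_E)$. The equivalence $\mathrm{BT}(\O_K)\simeq \mathrm{DM}(\O_K)$ that you need is the \emph{main theorem} of \cite{prismaticdieudonne} (prismatic Dieudonn\'e theory), a separate and substantially deeper statement which is never recorded in the present paper. If you grant yourself that equivalence, then the rest of your outline (identifying the minuscule condition with Hodge--Tate weights in $\{0,1\}$ via the filtration description of \textsection\ref{construction kisin 06}, and matching the \'etale realization with $T_p$ via the Breuil--Kisin--Fargues module) is reasonable, and would indeed give an alternative route to the classical theorem. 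But as written, you have misidentified the key input, and you should also be aware that invoking the full strength of \cite{prismaticdieudonne} to recover a result of Tate, Raynaud, Fontaine and Kisin is somewhat backwards historically --- the paper's choice to simply cite the classical literature is the honest one.
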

\begin{proof}
 This is well-known; see e.g. \cite[Theorem 2.2.1]{Liupdivisible}.
\end{proof}
\begin{defn}[cf. \cite{Faltings02}]
Let $R$ be an $\O_E$-algebra. A $\pi$-divisible $\O_E$-module over $R$ is a $p$-divisible group $G$ over $R$ together with an action $\O_E\to \mathrm{End}(G)$, which is strict in the sense that the induced action of $\O_E$ on $\mathrm{Lie}(G)$ agrees with the action through the structure map $\O_E\to R$. 
\end{defn}
\begin{lem}\label{pi divisible and analytic}
Let $G$ be a $p$-divisible group over $\O_K$. Then an action $\O_E\to \mathrm{End}(G)$ makes $G$ into a $\pi$-divisible $\O_E$-module over $\O_K$ if and only if the $E$-representation $V_p(G)$ is $E$-crystalline in the sense of Definition \ref{analytic definition}. 
\end{lem}
\begin{proof}
This follows from the Hodge--Tate decomposition for $G$:
\begin{align}\label{Hodge--Tate decom}
    V_p(G)\otimes_{\mathbf{Q}_p}C\simeq (\mathrm{Lie}(G^{\vee})^{\vee}\otimes_{\O_K}C)\oplus (\mathrm{Lie}(G)\otimes_{\O_K}C(1)).
\end{align} 
More precisely, as $E\otimes_{\mathbf{Q}_p}C\simeq \prod_{\tau: E\hookrightarrow C}C$, an $(E\otimes_{\mathbf{Q}_p}C)$-module $V$ always decomposes uniquely as $V=\bigoplus_{\tau}V_{\tau}$. Concretely, $V_{\tau}\subseteq V$ is the $\mathbf{C}$-subspace on which $E$ acts via $\tau: E\hookrightarrow C$; in particular, $E$ acts on $V$ via $\tau_0$ if and only $\bigoplus_{\tau\ne \tau_0}V_{\tau}=0$. Now as \eqref{Hodge--Tate decom} is functorial in $G$, it must respect the $E$-action on both sides. In particular,
\begin{displaymath}
    \bigoplus_{\tau\ne \tau_0}(V_p(G)\otimes_{E,\tau}C)\simeq \bigoplus_{\tau\ne \tau_0}(\mathrm{Lie}(G^{\vee})^{\vee}\otimes_{\O_K}C)_{\tau}\oplus\bigoplus_{\tau\ne \tau_0}(\mathrm{Lie}(G)\otimes_{\O_K}C(1))_{\tau}.
\end{displaymath}
We now have 
\begin{align*}
& \text{the action $\O_E\to \mathrm{End}(G)$ is strict}\\
   &\Longleftrightarrow \text{$\O_E$ acts on $\mathrm{Lie}(G)$ via $\tau_0: E\hookrightarrow K$}\\
   &\Longleftrightarrow \text{$E$ acts on $\mathrm{Lie}(G)\otimes_{\O_K}C(1)$ via $\tau_0: E\hookrightarrow K\subseteq C$ ($\mathrm{Lie}(G)\hookrightarrow \mathrm{Lie}(G)\otimes_{\O_K}C$ as $\mathrm{Lie}(G)$ is $\O_K$-free)}\\
   &\Longleftrightarrow \text{$\bigoplus_{\tau\ne \tau_0}(\mathrm{Lie}(G)\otimes_{\O_K}C(1))_{\tau}=0$}\\
   &\Longleftrightarrow \text{$\bigoplus_{\tau\ne \tau_0}(\mathrm{Lie}(G)\otimes_{\O_K}C(1))_{\tau}$ is trivial as a $C$-rep. (as it is a subrep. of some ${C}(1)^{\oplus n}$)}\\
   &\Longleftrightarrow \text{$\bigoplus_{\tau\ne \tau_0}(V_p(G)\otimes_{E,\tau}C)$ is trivial as a $C$-rep. (as $\mathrm{Lie}(G^{\vee})^{\vee}\otimes_{\O_K}C$ is already trivial)}\\
   &\Longleftrightarrow \text{the $E$-rep. $V_p(G)$ is $E$-crystalline},
\end{align*}
as desired.
\end{proof}
Combining Theorem \ref{main thm body}, Proposition \ref{breuil kisin eqvaluate equivalence}, Theorem \ref{liu}, and Lemma \ref{pi divisible and analytic}, we obtain the following classification of $\pi$-divisible $\O_E$-modules over $\O_K$ (including the case $p=2$).
\begin{thm}[{cf. \cite[Theorem 1.1]{chengpdivisible}, \cite[Theorem 1.0.3]{CaisLiu}}]\label{classification pi divisible Cheng}
    There is a natural equivalence between the category of $\pi$-divisible $\O_E$-modules over $\O_K$ and the category of minuscule Breuil--Kisin modules over $\fkS_E$.
\end{thm}
\begin{remark}
In \cite{CaisLiu}, the authors have obtained (by a different approach) a similar equivalence even for a large class of Frobenius lifts. In their result, the relevant category of $p$-divisible groups over $\O_K$ is formed by those which come equipped with an action of $\O_E$ \textit{for which} the rational Tate module is an $E$-crystalline representation. However, as far as we understand, the fact that this is in fact identified with the category of $\pi$-divisible $\O_E$-modules over $\O_K$ (Lemma \ref{pi divisible and analytic}) was not observed by them.
\end{remark}
\appendix
\section{\texorpdfstring{$E$}{E}-crystalline representations and filtered isocrystals}\label{appendix crystalline}
In this appendix, we prove Theorem \ref{characterization of E-crystalline main body}, thereby giving equivalent characterizations for the category of $E$-crystalline Galois representations. We will follow closely \cite[Chapitre 10]{FFCurves}, which treats the case $E=\mathbf{Q}_p$.
\subsection{Recap on the Fargues--Fontaine curve}
Let $X_E$ be the Fargues--Fontaine curve associated to $E$ and the perfectoid $\mathbf{F}_q$-algebra $F:=C^\flat$, where the $\mathbf{F}_q$-algebra structure on $F$ is defined using the fixed inclusion $\tau_0: E\hookrightarrow K\subseteq C$. As $G_K$ acts naturally on $C$ (hence on $F=C^\flat$), we obtain an induced ${E}$-linear action of $G_K$ on $X_E$. Recall that we also have a canonical identification $X_E= X_{\mathbf{Q}_p}\otimes_{\mathbf{Q}_p}E$ (cf. \cite[Th\' eor\` eme 6.5.2 (2)]{FFCurves}). Under this identification, $g\in G_K$ acts on $X_E$ as $g\otimes \mathrm{id}_E$.

Let $\pi: X_E\to X_{\mathbf{Q}_p}$ be the projection; this is a $G_K$-equivariant finite \' etale covering of degree $[E:\mathbf{Q}_p]$.
Let $\infty\in |X_{\mathbf{Q}_p}|$ be the distinguished closed point corresponding to the tautological untilt $\mathbf{Q}_p\hookrightarrow C$ of $F$. Recall that $\infty$ is fixed by $G_K$, and in fact the unique closed point of $X_{\mathbf{Q}_p}$ whose $G_K$-orbit is finite (see \cite[Proposition 10.1.1]{FFCurves}). For each $\tau: E\hookrightarrow C$, let $\infty_{\tau}$ be the closed point in $X_E $ corresponding to the (Frobenius isomorphism class of the) untilt $\tau: E\hookrightarrow C$. Concretely, $\infty_{\tau}$ is given by the (image of the) closed immersion $\Sp(C)\xrightarrow{(\infty,\tau)}X_{\mathbf{Q}_p}\otimes_{\mathbf{Q}_p}E=X_E$. 
\begin{lem}
The assignment $\tau\mapsto \infty_{\tau}$ defines a $G_K$-equivariant bijection $\mathrm{Hom}_{\mathbf{Q}_p}(E,C)\simeq \pi^{-1}(\infty)$
\end{lem}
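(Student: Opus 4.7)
The plan is to analyze the scheme-theoretic fiber $\pi^{-1}(\infty)$ directly, using the product decomposition $X_E = X_{\mathbf{Q}_p}\otimes_{\mathbf{Q}_p}E$, and then to unwind the definition of $\infty_\tau$ to match closed points with embeddings.

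First, I would note that since $\infty\in X_{\mathbf{Q}_p}$ is a closed point with residue field $C$ (via the fixed untilt), the scheme-theoretic fiber is
\[
\pi^{-1}(\infty) \;=\; \{\infty\}\times_{X_{\mathbf{Q}_p}} X_E \;=\; \{\infty\}\times_{X_{\mathbf{Q}_p}}(X_{\mathbf{Q}_p}\otimes_{\mathbf{Q}_p}E) \;=\; \mathrm{Spec}(C\otimes_{\mathbf{Q}_p}E).
\]
The standard ring isomorphism $C\otimes_{\mathbf{Q}_p}E\xrightarrow{\sim}\prod_{\tau\colon E\hookrightarrow C}C$, $a\otimes b\mapsto (a\cdot\tau(b))_\tau$, then identifies the closed points of $\pi^{-1}(\infty)$ with $\mathrm{Hom}_{\mathbf{Q}_p}(E,C)$, each $\tau$ corresponding to the maximal ideal $\mathfrak n_\tau:=\ker(C\otimes_{\mathbf{Q}_p}E\twoheadrightarrow C)$ defined by $a\otimes b\mapsto a\cdot\tau(b)$. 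To see that this identification sends $\tau$ to $\infty_\tau$ (as defined in the text via $\mathrm{Spec}(C)\xrightarrow{(\infty,\tau)}X_{\mathbf{Q}_p}\otimes_{\mathbf{Q}_p}E$), I would simply observe that the ring map $C\otimes_{\mathbf{Q}_p}E\to C$ corresponding to $(\infty,\tau)$ is precisely the one with kernel $\mathfrak n_\tau$.

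For $G_K$-equivariance, recall that $g\in G_K$ acts on $X_E$ as $g\otimes\mathrm{id}_E$, and that $\infty\in X_{\mathbf{Q}_p}$ is fixed by $G_K$, with the induced action on its residue field $C$ coinciding with the tautological Galois action. Hence on the fiber $\mathrm{Spec}(C\otimes_{\mathbf{Q}_p}E)$, the element $g$ acts through the ring automorphism $g\otimes\mathrm{id}_E$ of $C\otimes_{\mathbf{Q}_p}E$. A one-line computation shows $(g\otimes\mathrm{id}_E)(\mathfrak n_\tau)=\mathfrak n_{g\tau}$: indeed, the composition $C\otimes_{\mathbf{Q}_p}E\xrightarrow{g^{-1}\otimes\mathrm{id}}C\otimes_{\mathbf{Q}_p}E\xrightarrow{\pi_\tau}C\xrightarrow{g}C$ sends $a\otimes b$ to $g(g^{-1}(a)\cdot\tau(b))=a\cdot(g\tau)(b)$, so its kernel is $\mathfrak n_{g\tau}$ while being equal to $(g\otimes\mathrm{id}_E)(\mathfrak n_\tau)$ by the isomorphism property. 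Translating back via the contravariant action of $g$ on closed points, this yields $g\cdot\infty_\tau=\infty_{g\tau}$.

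I do not expect a genuine obstacle here: the entire argument is a bookkeeping exercise once one uses the product presentation of $X_E$ and the $G_K$-equivariance of the isomorphism $X_E\simeq X_{\mathbf{Q}_p}\otimes_{\mathbf{Q}_p}E$. The only mild subtlety is to keep track of the direction of the $G_K$-action on $\mathrm{Hom}_{\mathbf{Q}_p}(E,C)$ (namely, post-composition $\tau\mapsto g\circ\tau$) versus the action on maximal ideals; this is handled by the explicit computation above.
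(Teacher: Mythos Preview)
Your proof is correct. The paper takes a slightly different but equally elementary route: rather than computing the scheme-theoretic fiber as $\mathrm{Spec}(C\otimes_{\mathbf{Q}_p}E)$ and decomposing the ring, it argues that since $\pi$ is finite \'etale of degree $[E:\mathbf{Q}_p]$ and $\infty$ has algebraically closed residue field $C$, the fiber $\pi^{-1}(\infty)$ consists of exactly $[E:\mathbf{Q}_p]$ closed points, so it suffices to show surjectivity; this is then checked by noting that any $x\in\pi^{-1}(\infty)$ has residue field $C$ and recovering $\tau$ as the composition $\mathrm{Spec}(k(x))\hookrightarrow X_E\to\mathrm{Spec}(E)$. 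Your direct computation of the fiber ring is arguably cleaner and gives the bijection in one step, while the paper's counting-plus-surjectivity argument avoids invoking the product decomposition $X_E\simeq X_{\mathbf{Q}_p}\otimes_{\mathbf{Q}_p}E$ at the level of fibers. For $G_K$-equivariance the paper simply asserts it follows from the description of $\infty_\tau$, whereas you give the explicit verification; your treatment here is more thorough.
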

\begin{proof}
From the previous description of $\infty_{\tau}$, we see easily that the map is $G_K$-equivariant. As $\pi$ is finite \' etale of degree $[E:\mathbf{Q}_p]$ and $\infty\in X_{\mathbf{Q}_p}$ is a closed point with algebraically closed residue field (namely $C$), $\pi^{-1}(\infty)\subseteq |X_E|$ is a finite set of $[E:\mathbf{Q}_p]$ closed points. In particular, it suffices to show that the map is surjective. Indeed, a point $x\in \pi^{-1}(\infty)$ necessarily has residue field $k(x)=C$, and it is clear from the construction that $x=\infty_{\tau}$, where $\tau: E\hookrightarrow C$ is given by the composition $\Sp(k(x))\hookrightarrow X_E\to \Sp(E)$.
\end{proof}
In particular, we see that the point $\infty_{\tau_0}$ (given by the fixed embedding $\tau_0$) is fixed by $G_K$.

Recall that we have a canonical identification $\widehat{(\O_{X_{\mathbf{Q}_p}})}_\infty\xrightarrow{\sim} B_{\mathrm{dR}}^+$, where $B_{\mathrm{dR}}^+$ is the usual Fontaine's period ring constructed using $C$. Also, the inclusion $\overline{K}\hookrightarrow C$ lifts uniquely to a (necessarily $G_K$-equivariant) section $\overline{K}\hookrightarrow B_{\mathrm{dR}}^+$. Now we have a canonical identification 
\begin{displaymath}
    B_{\mathrm{dR}}^+\otimes_{\mathbf{Q}_p}E\xrightarrow{\sim }\prod_{\tau: E\hookrightarrow\overline{K}}\widehat{(\O_{X_E})}_{\infty_\tau}
\end{displaymath}
(see e.g. \cite[\href{https://stacks.math.columbia.edu/tag/07N9}{Tag 07N9}]{Sta21}). Using the section $\overline{K}\hookrightarrow B_{\mathrm{dR}}^+$ above, we can rewrite the left side as
\begin{displaymath}
   B_{\mathrm{dR}}^+\otimes_{\overline{K}}(\overline{K}\otimes_{\mathbf{Q}_p}E)= \prod_{\tau: E\hookrightarrow\overline{K}}B_{\mathrm{dR}}^+.
\end{displaymath}
Thus, for each $\tau$, there is an $E$-algebra isomorphism $\widehat{(\O_{X_E})}_{\infty_\tau}\xrightarrow{\sim }B_{\mathrm{dR}}^+$, where the right side is regarded as an $E$-algebra via the composition $\tau: E\hookrightarrow\overline{K}\hookrightarrow B_{\mathrm{dR}}^+$.

Recall that for each compact interval $I\subseteq ]0,1[$, $B_{E,I}$ denotes the completion of 
\begin{displaymath}
    B_E^b:=\{\sum_{n\gg -\infty}[x_n]\pi^n\in W_{\O_E}(C^\flat)\;|\;(x_n)\;\text{bounded}\}
\end{displaymath}
with respect to the family of norms $|\cdot|_{\rho}, \rho\in I$. We then let $B_E:=\varprojlim_{I}B_{E,I}$. Similarly using the ring $B_E^{b,+}:=W_{\O_E}(\O_{C^\flat})[1/\pi]$, we can define $B_E^+$ and $B_{E,I}^+$. See \cite[Chapitre I]{FFCurves} for a more detailed discussion.
\begin{lem}\label{Galois invariants}
For each compact interval $I\subseteq ]0,1[$, 
\begin{displaymath}
 (\mathrm{Frac}(B_{E,I}))^{G_K}=K_{0,E},  
\end{displaymath}
where $K_{0,E}:=K_0\otimes_{E_0}E$. In particular, as $B_E\subseteq B_{E,I}$, we have $(\mathrm{Frac}(B_E))^{G_K}=K_{0,E}$.  
\end{lem}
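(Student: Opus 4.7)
The strategy is to reduce to the classical Fargues--Fontaine statement $(\mathrm{Frac}(B_{\mathbf{Q}_p,I}))^{G_K}=K_0$ (cf.~\cite[Chapitre 10]{FFCurves}) by base change of scalars from $E_0$ to $E$. Recall that the fixed embedding $\tau_0:E\hookrightarrow K$ forces $E_0\subseteq K_0$, so $K_{0,E}=K_0\otimes_{E_0}E$ makes sense; in fact it is a field, since $E/E_0$ is totally ramified and $K_0/E_0$ is unramified.

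The first step is to establish a $G_K$-equivariant ring isomorphism
\[
B_{E,I}\;\simeq\; B_{\mathbf{Q}_p,I}\otimes_{E_0}E
\]
where $G_K$ acts trivially on the right-hand factor $E$. This follows from the universal property of $\O_E$-Witt vectors, which gives $W_{\O_E}(\O_{C^\flat})\simeq W(\O_{C^\flat})\otimes_{\O_{E_0}}\O_E$ and, after inverting $\pi$ and passing to the bounded subrings, $B_E^b\simeq B_{\mathbf{Q}_p}^b\otimes_{E_0}E$. Since $E/E_0$ is a finite extension, the Gauss norms $|\cdot|_\rho$ on $B_E^b$ are exactly those induced from $B_{\mathbf{Q}_p}^b$ via this base change, and completion commutes with the finite free extension $\otimes_{E_0}E$, yielding the identification of $B_{E,I}$. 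Triviality of the $G_K$-action on the $E$-factor is clear because $\tau_0(E)\subseteq K$ is fixed by $G_K$ and the scalar embedding $\O_E\hookrightarrow W_{\O_E}(\O_{C^\flat})$ is $G_K$-equivariant.

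With this identification, any $G_K$-basis $e_1,\dots,e_n$ of $E$ over $E_0$ is simultaneously a free basis of $\mathrm{Frac}(B_{E,I})$ over $\mathrm{Frac}(B_{\mathbf{Q}_p,I})$. Writing $x\in\mathrm{Frac}(B_{E,I})^{G_K}$ uniquely as $x=\sum_i y_ie_i$ with $y_i\in\mathrm{Frac}(B_{\mathbf{Q}_p,I})$, uniqueness of the decomposition together with $G_K$-invariance of $x$ and the $e_i$ forces each $y_i\in\mathrm{Frac}(B_{\mathbf{Q}_p,I})^{G_K}=K_0$. Hence $x\in K_0\otimes_{E_0}E=K_{0,E}$; the reverse inclusion is immediate. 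The ``in particular'' statement then follows from the inclusion $B_E\hookrightarrow B_{E,I}$ combined with the obvious containment $K_{0,E}\subseteq B_E^{G_K}$.

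The principal technical point is the first step: checking that the Gauss norms on $B_E^b$ are exactly those inherited from $B_{\mathbf{Q}_p}^b$ under the base change, so that completion commutes with $\otimes_{E_0}E$. This amounts to unwinding the explicit definition of $|\cdot|_\rho$ in terms of Teichmüller expansions, and is straightforward thanks to the finiteness of $E/E_0$; once granted, the lemma reduces cleanly to its known $E=\mathbf{Q}_p$ analogue.
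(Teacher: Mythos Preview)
Your proof is correct and follows essentially the same route as the paper's: both reduce to the known case $E=\mathbf{Q}_p$ via the base-change identification $B_{E,I}\simeq B_{\mathbf{Q}_p,I'}\otimes_{E_0}E$ (this is \cite[Proposition 1.6.9]{FFCurves}, which the paper cites) and then invoke $(\mathrm{Frac}(B_{\mathbf{Q}_p,I'}))^{G_K}=K_0$. Two small points worth tightening: the interval on the $\mathbf{Q}_p$-side is a rescaling $I'$ of $I$ rather than $I$ itself (harmless, since the statement is for all $I$), and your claim that the $e_i$ form a $\mathrm{Frac}(B_{\mathbf{Q}_p,I'})$-basis of $\mathrm{Frac}(B_{E,I})$ tacitly uses that $\mathrm{Frac}(B_{\mathbf{Q}_p,I'})\otimes_{E_0}E$ is already a field---this is exactly what the paper extracts from \cite[Proposition 10.2.7]{FFCurves}, and it follows easily since $B_{E,I}$ is a domain finite over $B_{\mathbf{Q}_p,I'}$.
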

\begin{proof}
For $E=\mathbf{Q}_p$, this is \cite[Proposition 10.2.7]{FFCurves} for $E=\mathbf{Q}_p$, but the same argument works also for general $E$. One can also deduce the general case from the case $E=\mathbf{Q}_p$ as follows. By Proposition 1.6.9 of \textit{loc.cit.} (and scaling $I$), it suffices to show that $(\mathrm{Frac}(B_{\mathbf{Q}_p,I}\otimes_{E_0}E))^{G_K}=K_{0,E}$. But by Proposition 10.2.7 of \textit{loc.cit.}, $\mathrm{Frac}(B_{\mathbf{Q}_p,I})\otimes_{{E}_0}E$ is already a field, so we have $\mathrm{Frac}(B_{\mathbf{Q}_p,I}\otimes_{E_0}E)=\mathrm{Frac}({B_{\mathbf{Q}_p,I}})\otimes_{{E}_0}E$, and hence $(\mathrm{Frac}(B_{\mathbf{Q}_p,I}\otimes_{E_0}E))^{G_K}=(\mathrm{Frac}(B_{\mathbf{Q}_p,I}))^{G_K}\otimes_{E_0}E=K_0\otimes_{E_0}E=K_{0,E}$, as wanted.  
\end{proof}
\subsection{Relation with filtered isocrystals}
Let $\pi$ be a fixed choice of a uniformizer of $E$, and let $\mathbf{F}_q$ denotes the residue field of $E$. As usual, we denote by $\varphi_q$ the $E$-linear $q$-Frobenius on $K_{0,E}\simeq W_{\O_E}(k)[1/\pi]$.
\begin{defn}\label{phi q filtered}
Let $\vect(K_{0,E})$ be the category of $\varphi_q$-modules (or isocrystals) $(D,\varphi_q)$ over $K_{0,E}$, i.e. finite dimensional $K_{0,E}$-vector spaces $D$ equipped with a linear isomorphism $\varphi_q^*D\xrightarrow{\sim} D$.

Let $\mf(K)$ be the category of filtered $\varphi_q$-modules over $K$, i.e. triples $(D,\varphi_q,\mathrm{Fil}^\bullet D_K)$, where $(D,\varphi_q)\in \vect(K_{0,E})$, and $\mathrm{Fil}^\bullet D_K$ is a decreasing filtration on $D_K:=D\otimes_{K_{0,E}}K$.
\end{defn}
Fix $t_E\in (B^+_E)^{\varphi_q=\pi}$ such that $V^+(t_E)=\{\infty_{\tau_0}\}$ ($t_E$ is uniquely determined up to multiplication in $E^\times$). Let $B_{e,E}:=\Gamma(X_E\setminus \{\infty_{\tau_0}\},\O_{X_E})=(B_E^+[1/t_E])^{\varphi_q=1}$; this is a PID equipped with an action of $G_K$. As in \cite[D\' efinition 10.1.2]{FFCurves}, we let $\mathrm{Rep}_{B_{e,E}}(G_K)$ denote the category of finite free $B_{e,E}$-modules equipped with a continuous semilinear action of $G_K$.
\begin{defn}
Define functors 
\begin{align*}
    D_{\mathrm{cris},E}: \mathrm{Rep}_{B_{e,E}}(G_K) & \to \vect(K_{0,E})\\
    M & \mapsto (M\otimes_{B_{e,E}}B_E^+[1/t_E])^{G_K}
\end{align*}
and 
\begin{align*}
    V_{\mathrm{cris},E}: \vect(K_{0,E}) & \to \mathrm{Rep}_{B_{e,E}}(G_K)\\
    (D,\varphi_q) & \mapsto (D\otimes_{K_{0,E}}B_E^+[1/t_E])^{\varphi_q=1}. 
\end{align*}
\end{defn}
\begin{prop}[{{\cite[Proposition 10.2.12]{FFCurves}}}]\label{adjoints V cris}
\emph{(1)} The functors $D_{\mathrm{cris},E}$ and $V_{\mathrm{cris},E}$ are well-defined, and form an adjoint pair with $V_{\mathrm{cris},E}$ being the left adjoint.

\emph{(2)} $V_{\mathrm{cris},E}$ is fully faithful, i.e. the unit 
\begin{displaymath}
    \mathrm{id}\xrightarrow{\sim } D_{\mathrm{cris},E}\circ V_{\mathrm{cris},E} 
\end{displaymath}
is an isomorphism. 

\emph{(3)} For each $M$ in $\mathrm{Rep}_{B_{e,E}}(G_K)$, the counit 
\begin{displaymath}
    V_{\mathrm{cris},E}(D_{\mathrm{cris},E}(M))\hookrightarrow M
\end{displaymath}
is an injection.
\end{prop}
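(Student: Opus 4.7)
The plan is to mimic closely the $E=\mathbf{Q}_p$ argument in \cite[Chapitre 10]{FFCurves}, replacing $\varphi$ with $\varphi_q$ and $X_{\mathbf{Q}_p}$ with $X_E$, with Lemma \eqref{Galois invariants} and the geometry of the covering $\pi:X_E\to X_{\mathbf{Q}_p}$ supplying the necessary bridge to the known case.

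For part (1), the main task is well-definedness; the adjunction is then formal. The ring $B_E^+[1/t_E]$ is a $K_{0,E}$-algebra with $\varphi_q$-fixed subring $B_{e,E}$ by definition, and by Lemma \eqref{Galois invariants} its $G_K$-invariants equal $K_{0,E}$ (using that $B_E^+[1/t_E]\subseteq \mathrm{Frac}(B_E)$). For $M\in\mathrm{Rep}_{B_{e,E}}(G_K)$, the $K_{0,E}$-vector space $(M\otimes_{B_{e,E}}B_E^+[1/t_E])^{G_K}$ then inherits a $\varphi_q$-semilinear endomorphism; its finite-dimensionality and the invertibility of $\varphi_q$ can be reduced to the $E=\mathbf{Q}_p$ case using $\pi:X_E\to X_{\mathbf{Q}_p}$. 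The dual check for $V_{\mathrm{cris},E}(D)$, namely that $(D\otimes_{K_{0,E}}B_E^+[1/t_E])^{\varphi_q=1}$ is finite free over $B_{e,E}$, reduces similarly. The adjunction is then obtained by the standard manipulation: a $B_{e,E}[G_K]$-linear map $V_{\mathrm{cris},E}(D)\to M$ extends uniquely to a $\varphi_q$- and $G_K$-equivariant $B_E^+[1/t_E]$-linear map $D\otimes_{K_{0,E}}B_E^+[1/t_E]\to M\otimes_{B_{e,E}}B_E^+[1/t_E]$, whose restriction to $D$ (and application of $G_K$-invariants) gives a $\varphi_q$-equivariant $K_{0,E}$-linear map $D\to D_{\mathrm{cris},E}(M)$, and the correspondence is a bijection.

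For part (2), the unit $D\to D_{\mathrm{cris},E}(V_{\mathrm{cris},E}(D))$ comes from the inclusion $D\hookrightarrow D\otimes_{K_{0,E}}B_E^+[1/t_E]$. The essential input is the identification
\begin{displaymath}
V_{\mathrm{cris},E}(D)\otimes_{B_{e,E}}B_E^+[1/t_E]\xrightarrow{\sim}D\otimes_{K_{0,E}}B_E^+[1/t_E],
\end{displaymath}
expressing that $\varphi_q$-invariants generate after base change. This is the geometric content of the fundamental exact sequence on $X_E$ and is obtained by descent along $\pi:X_E\to X_{\mathbf{Q}_p}$ from the classical $\mathbf{Q}_p$-analog in \cite[Chapitre 10]{FFCurves}. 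Taking $G_K$-invariants on both sides and applying Lemma \eqref{Galois invariants} to identify $(B_E^+[1/t_E])^{G_K}=K_{0,E}$ produces the isomorphism $D\xrightarrow{\sim} D_{\mathrm{cris},E}(V_{\mathrm{cris},E}(D))$.

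For part (3), the counit is induced by the canonical $\varphi_q$- and $G_K$-equivariant $B_E^+[1/t_E]$-linear map
\begin{displaymath}
D_{\mathrm{cris},E}(M)\otimes_{K_{0,E}}B_E^+[1/t_E]\longrightarrow M\otimes_{B_{e,E}}B_E^+[1/t_E]
\end{displaymath}
by taking $\varphi_q=1$-invariants (using $(B_E^+[1/t_E])^{\varphi_q=1}=B_{e,E}$ and that $M$ is finite free over the PID $B_{e,E}$, so base change preserves injectivity). It therefore suffices to show the displayed map itself is injective, which can be checked after inverting sufficiently many elements to reach $\mathrm{Frac}(B_E)$, where one uses Lemma \eqref{Galois invariants} to identify invariants with $K_{0,E}$ and reduces injectivity to $K_{0,E}$-linear independence of elements of $D_{\mathrm{cris},E}(M)$ inside $M\otimes\mathrm{Frac}(B_E)$. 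The expected main obstacle is the generation isomorphism in part (2); everything else is a formal consequence of the adjunction machinery together with Lemma \eqref{Galois invariants}.
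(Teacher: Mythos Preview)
Your treatment of part (3) is correct and matches the paper: one passes to $\mathrm{Frac}(B_E^+[1/t_E])$ and invokes Lemma \eqref{Galois invariants} to get injectivity of the linearization map, from which injectivity of the counit follows by taking $\varphi_q$-invariants. The paper also extracts from this step the finite-dimensionality of $D_{\mathrm{cris},E}(M)$ and bijectivity of its Frobenius, which is how well-definedness of $D_{\mathrm{cris},E}$ is actually established (rather than by a separate reduction to $E=\mathbf{Q}_p$).

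The gap is in part (2). You assert that the key isomorphism
\[
V_{\mathrm{cris},E}(D)\otimes_{B_{e,E}}B_E^+[1/t_E]\xrightarrow{\sim}D\otimes_{K_{0,E}}B_E^+[1/t_E]
\]
``is obtained by descent along $\pi:X_E\to X_{\mathbf{Q}_p}$ from the classical $\mathbf{Q}_p$-analog,'' but this is not a proof as stated, and the naive descent does not work: the $\mathbf{Q}_p$-period $t$ pulls back along $\pi$ to an element vanishing at \emph{all} of $\pi^{-1}(\infty)=\{\infty_\tau\}_\tau$, whereas $t_E$ vanishes only at $\infty_{\tau_0}$. Thus $B_E^+[1/t_E]$ is not $B_{\mathbf{Q}_p}^+[1/t]\otimes_{E_0}E$, and there is no direct way to transport the $\mathbf{Q}_p$-isomorphism across. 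Similarly, a $\varphi_q$-module over $K_{0,E}$ does not obviously come from a $\varphi_p$-module over $K_0$ in a way compatible with the two period rings. The paper instead proves the isomorphism directly: after enlarging $k$ to be algebraically closed, Dieudonn\'e--Manin reduces to the isoclinic case $D^\vee=\O(d/h)$, and one computes explicitly using the unramified extension $E_h/E$ and an element $t_{E_h}\in (B_E^+)^{\varphi_q^h=\pi}$ dividing $t_E$. This computation simultaneously shows that $V_{\mathrm{cris},E}(D)$ is finite free of rank $\dim_{K_{0,E}}D$ over $B_{e,E}$, which is the well-definedness of $V_{\mathrm{cris},E}$ you had also deferred to a vague reduction. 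You should replace the descent gesture with this Dieudonn\'e--Manin argument.
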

\begin{proof}
This is \cite[Proposition 10.2.12]{FFCurves} for $E=\mathbf{Q}_p$. We begin by constructing a natural isomorphism $D_{\mathrm{cris},E}\circ V_{\mathrm{cris},E}\xrightarrow{\sim}\mathrm{id}$. Let $(D,\varphi_q)\in \vect(K_{0,E})$. We claim that the natural map 
\begin{align*}
     ((D\otimes_{K_{0,E}}B_E^+[1/t_E])^{\varphi_q=1} \otimes_{ B_{e,E}}B_E^+[1/t_E])^{G_K}\to D
\end{align*}
is an isomorphism (which is clearly $\varphi_q$-equivariant). As $(B_{E}^+[1/t_E])^{G_K}=K_{0,E}$ by Lemma \ref{Galois invariants}, it suffices to show that the natural map 
\begin{align}\label{v cris fully}
    (D\otimes_{K_{0,E}}B_E^+[1/t_E])^{\varphi_q=1} \otimes_{ B_{e,E}}B_E^+[1/t_E]\to D\otimes_{K_{0,E}} B_E^+[1/t_E]
\end{align}
is an isomorphism. By replacing $D$ with $D\otimes_{K_{0,E}}W_{\O_E}(\overline{k})[1/\pi]$, we may assume $k$ is algebraically closed. Then by the Dieudonn\'e--Manin theorem, we may reduce to the case $D^\vee\simeq D_{d/h}$ is isoclinic (for some $(d,h)\in \mathbf{Z}\times \mathbf{Z}_{\geq 1}$ with $(d,h)=1$). Recall that by definition, $D_{d/h}$ admits a $K_{0,E}$-basic $x_0,\ldots,x_{h-1}$ with $\varphi_q(x_i)=x_{i+1}$ for $0<i<h-1$, and $\varphi_q(x_{h-1})=\pi^d x_0$. Thus, 
\begin{align*}
    (D\otimes_{K_{0,E}}B_E^+[1/t_E])^{\varphi_q=1} & = \mathrm{Hom}_{\varphi_q}(D^\vee,B_E^+[1/t_E])=(B_E^+[1/t_E])^{\varphi_q^h=\pi^d},
\end{align*}
and so we are reduced to showing that the map
\begin{align*}
    (B_E^+[1/t_E])^{\varphi_q^h=\pi^d}\otimes_{B_{e,E}}B_E^+[1/t_E] & \to (B_E^+[1/t_E])^{\oplus h}\\
    x \otimes a & \mapsto (ax,a\varphi_q(x),...,a\varphi_q^{h-1}(x))
\end{align*} 
is bijective. As usual let $E_h\subseteq C$ denotes the unique unramified extension of $E$ of degree $h$; in particular we have $B_{E_h}^+=B_E^+$ canonically (cf. the proof of \cite[Proposition 1.6.9]{FFCurves}). Recall also that the vanishing locus of any nonzero element $t_{E_h}\in (B_E^+)^{\varphi_q^h=\pi}$ consists of a unique (closed) point: $V^+(t_{E_h})=\{\infty_{t_{E_h}}\}$ (cf. \cite[Th\' eor\` eme 6.5.2]{FFCurves}). Choose $t_{E_h}$ so that $\{\infty_{t_{E_h}}\}$ maps into $\{\infty_t\}$ under the projection map $\pi: X_{E_h}\to X_E$. Then $(B_E^+[1/t_E])^{\varphi_q^h=\pi^d}$ is a free of rank one over ${E}_h\otimes_E B_{e,E}$ with basis $t_{E_h}^d$. As $t_{E_h}$ is a unit in $B_E^+[1/t_E]$\footnote{We claim that $t_{E_h}$ divides $t_E$ in $B_E^+$. As $\varphi_q^h(t)=\pi^h t$, we can use \cite[Th\' eor\` eme 6.2.1]{FFCurves} to write $t=t_1\ldots t_h$ where $t_i\in (B_E^+)^{\varphi_q^h=\pi}$for each $i$. Then $\infty_{t_{E_h}}\in \pi^{-1}(\infty_t)=V^+(t_1)\cup \ldots \cup V^+(t_h)$, so we must have $t_{E_h}\in E_h^\times t_i$ for some $i$ (cf. Th\' eor\` eme 6.5.2 of \textit{loc.cit.}); in particular, we have $t_{E_h}|t$, as claimed.}, it suffices to show that the map 
\begin{align*}
    E_h\otimes_E B_E^+[1/t_E] & \to (B_E^+[1/t_E])^{\oplus h}\\
    x \otimes a & \mapsto (\varphi_q^i(a)x)_{0\leq i\leq h-1}
\end{align*}
is an isomorphism. This follows immediately from the analogue decomposition $E_h\otimes_E E_h\xrightarrow{\sim} \prod_{0\leq i\leq h-1} E_h$. This gives the isomorphism in part (2). Note that the argument also shows that $(D\otimes_{K_{0,E}}B_E^+[1/t_E])^{\varphi_q=1}$ is finite free over $B_{e,E}$ of rank $\dim_{K_{0,E}}(D)$; in other words, the functor $D\mapsto V_{\mathrm{cris},E}(D)$ is rank-preserving and indeed lands in $\mathrm{Rep}_{B_{e,E}}(G_K)$.

For part (3), it suffices to show that for each $M$ in $\mathrm{Rep}_{B_{e,E}}(G_K)$, the natural map 
\begin{align*}
    V_{\mathrm{cris},E}(D_{\mathrm{cris},E}(M))= ((M\otimes_{B_{e,E}} B_E^+[1/t_E])^{G_K}\otimes_{K_{0,E}} B_E^+[1/t_E])^{\varphi_q=1}\to M 
\end{align*}
is injective. As $(B_E^+[1/t_E])^{\varphi_q=1}=B_{e,E}$, we are reduced to show that the map 
\begin{align}\label{D cris M}
    (M\otimes_{B_{e,E}} B_E^+[1/t_E])^{G_K}\otimes_{K_{0,E}} B_E^+[1/t_E]\to M \otimes_{B_{e,E}}B_E^+[1/t_E]
\end{align}
is injective. Upon replacing $B_E^+[1/t_E]$ by $\mathrm{Frac}(B_E^+[1/t_E])$, the result follows readily from the equality $(\mathrm{Frac}(B_E^+[1/t_E]))^{G_K}=K_{0,E}$ in Lemma \ref{Galois invariants}. Note that the isomorphism \ref{D cris M} also shows that $\dim_{K_{0,E}}D_{\mathrm{cris,E}}(M)\leq \mathrm{rank}_{B_{e,E}}M<\infty$ and that the linearization $\varphi_q^*D_{\mathrm{cris},E}(M)\to D_{\mathrm{cris},E}(M)$ is an isomorphism. Indeed, as the source and target have the same (finite) $K_{0,E}$-dimension, it suffices to show that the map is injective, which in turn can be checked after the faithfully flat extension $K_{0,E}\to B_E^+[1/t_E]$. Thus, we see that the functor $M\mapsto D_{\mathrm{cris},E}(M)$ indeed lands in $\vect(K_{0,E})$, and moreover satisfies $\dim_{K_{0,E}}D_{\mathrm{cris},E}(M)\leq \mathrm{rank}_{B_{e,E}}M$. This finishes the proof of (3).

Finally, (1) follows by combining (2) and (3).
\end{proof}
\begin{defn}[{{\cite[Defn. 10.2.13]{FFCurves}}}]\label{crystalline definition}
(1) A representation $M\in \mathrm{Rep}_{B_{e,E}}(G_K)$ is called crystalline if $M\cong V_{\mathrm{cris},E}(D)$ for some $(D,\varphi_q)\in \vect(K_{0,E})$. We denote by $\mathrm{Rep}_{B_{e,E}}^{\mathrm{cris}}(G_K)$ the full subcategory of crystalline objects in $\mathrm{Rep}_{B_{e,E}}(G_K)$.

(2) A $G_K$-equivariant vector bundle $\E$ on $X_E$ or $X_E\setminus \{\infty\}$ is called crystalline if the $B_{e,E}$-representation $H^0(X_E\setminus \{\infty\},\E)$ is crystalline.
\end{defn}
\begin{lem}\label{crystalline iff M trivial}
$M$ is crystalline if and only if the $B_E^+[1/t_E]$-representation $M\otimes_{B_{e,E}}B_{E}^+[1/t_E]$ is trivial.
\end{lem}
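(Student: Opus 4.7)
The plan is to reduce both directions to the key isomorphism already established in the proof of Proposition \eqref{adjoints V cris} together with Galois descent, using that $(B_E^+[1/t_E])^{G_K} = K_{0,E}$ (which follows from Lemma \eqref{Galois invariants} since $B_E^+[1/t_E] \subseteq \mathrm{Frac}(B_E)$).

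For the forward direction, suppose $M \cong V_{\mathrm{cris},E}(D)$ for some $(D,\varphi_q) \in \vect(K_{0,E})$. The isomorphism \eqref{v cris fully} established in the proof of Proposition \eqref{adjoints V cris} gives a canonical $G_K$-equivariant identification
\begin{displaymath}
    M \otimes_{B_{e,E}} B_E^+[1/t_E] \xrightarrow{\sim} D \otimes_{K_{0,E}} B_E^+[1/t_E].
\end{displaymath}
The right-hand side is manifestly trivial (any $K_{0,E}$-basis of $D$ is Galois-fixed), so the left-hand side is trivial as well.

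For the reverse direction, assume $M \otimes_{B_{e,E}} B_E^+[1/t_E]$ is trivial as a $G_K$-representation. Set $D := D_{\mathrm{cris},E}(M) = (M \otimes_{B_{e,E}} B_E^+[1/t_E])^{G_K}$. Since the trivial representation $(B_E^+[1/t_E])^n$ has Galois invariants $K_{0,E}^n$ by Lemma \eqref{Galois invariants}, standard Galois descent yields a canonical isomorphism
\begin{displaymath}
    D \otimes_{K_{0,E}} B_E^+[1/t_E] \xrightarrow{\sim} M \otimes_{B_{e,E}} B_E^+[1/t_E],
\end{displaymath}
which is compatible with the $\varphi_q$-action on both sides. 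Passing to $\varphi_q$-fixed parts then produces a map $V_{\mathrm{cris},E}(D) \to M$: the right-hand fixed points recover $M$ because $M$ is finite free over $B_{e,E}$ and $(B_E^+[1/t_E])^{\varphi_q=1} = B_{e,E}$, while the left-hand fixed points are $V_{\mathrm{cris},E}(D)$ by definition. Since this map agrees with the counit and since the counit is always injective by Proposition \eqref{adjoints V cris}(3), we conclude $V_{\mathrm{cris},E}(D) \xrightarrow{\sim} M$, showing $M$ is crystalline.

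I do not anticipate a serious obstacle: the argument is purely formal once Proposition \eqref{adjoints V cris} and Lemma \eqref{Galois invariants} are in hand. The only mildly delicate point is the commutation of $\varphi_q$-fixed points with $(-) \otimes_{B_{e,E}} B_E^+[1/t_E]$ for the right-hand term, which reduces to the $G_K$- and $\varphi_q$-equivariant short exact sequence $0 \to B_{e,E} \to B_E^+[1/t_E] \xrightarrow{\varphi_q - 1} B_E^+[1/t_E] \to 0$ (or more directly, to the finite freeness of $M$ over $B_{e,E}$).
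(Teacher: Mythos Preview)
Your proof is correct and follows essentially the same approach as the paper: both directions invoke the isomorphism \eqref{v cris fully} from Proposition \eqref{adjoints V cris}, with the reverse direction using Galois descent (via $(B_E^+[1/t_E])^{G_K}=K_{0,E}$) followed by passage to $\varphi_q$-invariants. Your write-up is slightly more detailed than the paper's, but the underlying argument is identical.
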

\begin{proof}
This is essentially contained in the proof of Proposition \ref{adjoints V cris}. If $M\otimes_{B_{e,E}}B_{E}^+[1/t_E]$ is trivial, i.e. 
\begin{align*}
    (M\otimes_{B_{e,E}} B_E^+[1/t_E])^{G_K}\otimes_{K_{0,E}} B_E^+[1/t_E]\xrightarrow{\sim} M \otimes_{B_{e,E}}B_E^+[1/t_E],
\end{align*}
then $V_{\mathrm{cris},E}(D_{\mathrm{cris},E}(M))\xrightarrow{\sim} M$ by taking $\varphi_q$-invariants. Conversely, if $M\cong V_{\mathrm{cris},E}(D)$, then by \ref{v cris fully}, we have
\begin{align*}
    M\otimes_{B_{e,E}}B_E^+[1/t_E]&\cong V_{\mathrm{cris},E}(D,\varphi_q) \otimes_{B_{e,E}}B_E^+[1/t_E]\\
    & \xleftarrow{\sim} D\otimes_{K_{0,E}}B_E^+[1/t_E]
\end{align*}
is indeed trivial. 
\end{proof}
\begin{remark}\label{Bcris E}
Recall that $B_{\mathrm{cris},E}$ denote Fontaine's crystalline period ring defined using $E$ and $\tau_0: E\hookrightarrow K\subseteq C$. As in the case $E=\mathbf{Q}_p$ (\cite[Proposition 1.10.12]{FFCurves}), one can check that $B_E^+=\cap_{n\geq 0}\varphi_q^n(B_{\mathrm{cris},E}^+)$ (resp. $B_E^+[1/t_E]=\cap_{n\geq 0}\varphi_q^n(B_{\mathrm{cris},E})$) is the maximal subring of $B_{\mathrm{cris},E}^+$ (resp. $B_{\mathrm{cris},E}$) over which Frobenius is an automorphism. It follows that $D_{\mathrm{cris},E}(M)$ can also be computed as $(M\otimes_{B_{e,E}}B_{\mathrm{cris},E})^{G_K}$ and that $M$ is crystalline if and only if $M\otimes_{B_{e,E}}B_{\mathrm{cris},E}$ is trivial as a $B_{\mathrm{cris},E}$-representation. Indeed, as $\varphi_q$ is an automorphism on $D:=(M\otimes_{B_{e,E}}B_{\mathrm{cris},E})^{G_K}$, we have $D\subseteq \cap_{n\geq 0}\varphi_q^n(M\otimes_{B_{e,E}}B_{\mathrm{cris},E})=M\otimes_{B_{e,E}}\cap_{n\geq 0}\varphi_q^n(B_{\mathrm{cris},E})=M\otimes_{B_{e,E}}B_{E}^+[1/t_E]$, whence $D=D_{\mathrm{cris},E}(M)$.
\end{remark}
\begin{remark}
We have seen that $M$ is crystalline precisely when the natural injective map $V_{\mathrm{cris},E}(D_{\mathrm{cris},E}(M))\hookrightarrow M$ is an isomorphism. In the case $E=\mathbf{Q}_p$, it in fact suffices to require that the source and target have the same $B_{e,E}$-rank, i.e. $\dim_{K_{0,E}} D_{\mathrm{cris},E}(M)=\mathrm{rank}_{B_{e,E}} M$. Indeed, in this case, $\infty$ is the unique closed point in $X_{\mathbf{Q}_p}$ with finite $G_K$-orbit, so any $G_K$-equivariant coherent sheaf on $X_{\mathbf{Q}_p}\setminus \{\infty\}$ must be torsion-free (as its torsion part must have empty support), and hence a vector bundle. Thus, if $\dim_{K_{0,E}} D_{\mathrm{cris},E}(M)=\mathrm{rank}_{B_{e,E}} M$, then the torsion $B_{e,E}$-module $\mathrm{coker}(V_{\mathrm{cris},E}(D_{\mathrm{cris},E}(M))\hookrightarrow M)$ must be zero, as claimed. On the other hand, this does not seem to be enough if $E\ne \mathbf{Q}_p$ since in general there can be more closed points in $X_E$ with finite $G_K$-orbit (for instance, if $K$ contains the Galois closure of $E$ in $\overline{K}$, then $G_K$ fixes $\infty_{\tau}$ for all $\tau: E\hookrightarrow \overline{K}$). (See, however, the proof of Proposition \ref{equivalence E crystalline} below.) This is also related to the fact that the ring $B_E^+[1/t_E]$ is not $(E,G_K)$-regular (as opposed to the case $E=\mathbf{Q}_p$, cf. \cite[Corollaire 10.2.8]{FFCurves}): the line $Et$ is $G_K$-stable yet $t\notin (B_E^+[1/t_E])^\times$. 
\end{remark}
\begin{lem}\label{crystalline stable tensor}
\emph{(1)} The functor $D_{\mathrm{cris},E}$ defines an equivalence $\mathrm{Rep}_{B_{e,E}}^{\mathrm{cris}}(G_K)\xrightarrow{\sim}\vect(K_{0,E})$ with quasi-inverse $V_{\mathrm{cris},E}$.

\emph{(2)} Both $D_{\mathrm{cris},E}$ and $V_{\mathrm{cris},E}$ are exact. 

\emph{(3)} The category $\mathrm{Rep}_{B_{e,E}}^{\mathrm{cris}}(G_K)$ is stable under direct summand subquotients, tensor products, and duals. Moreover, $D_{\mathrm{cris},E}$ naturally respects these operations.
\end{lem}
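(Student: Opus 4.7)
Part (1) is immediate from Proposition \eqref{adjoints V cris}: since $V_{\mathrm{cris},E}$ is fully faithful with essential image equal, by the definition of crystallinity, to $\mathrm{Rep}_{B_{e,E}}^{\mathrm{cris}}(G_K)$, restricting the codomain yields an equivalence whose quasi-inverse is $D_{\mathrm{cris},E}$.

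For part (2), I would first check exactness of $V_{\mathrm{cris},E}$. Being a left adjoint, it is right exact, so only left exactness needs verification. Given an injection $D'\hookrightarrow D$ in $\vect(K_{0,E})$, the isomorphism \eqref{v cris fully} identifies the base change $V_{\mathrm{cris},E}(D')\otimes_{B_{e,E}}B_E^+[1/t_E]\to V_{\mathrm{cris},E}(D)\otimes_{B_{e,E}}B_E^+[1/t_E]$ with the injection $D'\otimes_{K_{0,E}}B_E^+[1/t_E]\hookrightarrow D\otimes_{K_{0,E}}B_E^+[1/t_E]$. Since $V_{\mathrm{cris},E}(D')$ is finite free over $B_{e,E}$ and $B_{e,E}\to B_E^+[1/t_E]$ is a nonzero flat map, the kernel of $V_{\mathrm{cris},E}(D')\to V_{\mathrm{cris},E}(D)$ becomes zero after base change, and hence is already zero. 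Exactness of $D_{\mathrm{cris},E}$ on the crystalline subcategory then follows from (1).

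For part (3), stability under tensor products and duals follows from Lemma \eqref{crystalline iff M trivial}: base change to $B_E^+[1/t_E]$ commutes with $\otimes$ and with $\mathrm{Hom}(-,B_{e,E})$, and trivial $B_E^+[1/t_E]$-representations (possessing $G_K$-invariant bases) are manifestly closed under these operations. The hard part will be stability under subquotients; indeed, by the remark following Lemma \eqref{crystalline iff M trivial}, $B_E^+[1/t_E]$ is not $(E,G_K)$-regular in general, so one cannot naively argue that sub-representations of trivial representations are trivial. My plan is as follows: given a short exact sequence $0\to M_1\to M_2\to M_3\to 0$ in $\mathrm{Rep}_{B_{e,E}}(G_K)$ with $M_2$ crystalline, apply the left-exact functor $D_{\mathrm{cris},E}$ and combine the rank bound $\dim_{K_{0,E}}D_{\mathrm{cris},E}(N)\leq \mathrm{rank}_{B_{e,E}}N$ from the proof of Proposition \eqref{adjoints V cris} with the identity $\mathrm{rank}\,M_2=\mathrm{rank}\,M_1+\mathrm{rank}\,M_3$ and with $\dim D_{\mathrm{cris},E}(M_2)=\mathrm{rank}\,M_2$ to force $\dim D_{\mathrm{cris},E}(M_i)=\mathrm{rank}\,M_i$ for $i=1,3$ and to show that the induced sequence $0\to D_{\mathrm{cris},E}(M_1)\to D_{\mathrm{cris},E}(M_2)\to D_{\mathrm{cris},E}(M_3)\to 0$ is already exact. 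Applying the (now exact) $V_{\mathrm{cris},E}$ to this yields a short exact sequence mapping into the original via the counits, in which the middle vertical is an isomorphism (by crystallinity of $M_2$) and the outer verticals are injective by Proposition \eqref{adjoints V cris}(3); the snake lemma then forces the cokernels $M_i/V_{\mathrm{cris},E}(D_{\mathrm{cris},E}(M_i))$ to vanish for $i=1,3$, proving crystallinity. Stability under direct summands is a special case. Compatibility of $D_{\mathrm{cris},E}$ with all these operations is then routine from the constructions and the equivalence in (1).
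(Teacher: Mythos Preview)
Your argument is correct and follows essentially the same route as the paper. Two minor remarks. In (2), invoking ``left adjoint $\Rightarrow$ right exact'' is a little delicate because $\mathrm{Rep}_{B_{e,E}}(G_K)$ is only an exact category, not an abelian one; the paper instead runs your faithful-flatness check for the entire short exact sequence at once (using that $B_{e,E}\hookrightarrow B_E^+[1/t_E]$ is faithfully flat), which handles all three spots simultaneously. In (3), your dimension-counting detour to force the top row to be short exact before applying the snake lemma is not needed: the paper observes that with the middle vertical an isomorphism and the outer verticals injective, a direct diagram chase using only \emph{left} exactness of the top row (which holds because $D_{\mathrm{cris},E}$ is a right adjoint, hence left exact on all of $\mathrm{Rep}_{B_{e,E}}(G_K)$, and $V_{\mathrm{cris},E}$ is exact) already forces the outer verticals to be isomorphisms.
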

\begin{proof}
(1) follows immediately from definition and Proposition \ref{adjoints V cris}.

(2) We will show that $V_{\mathrm{cris},E}$ is exact (the argument for $D_{\mathrm{cris},E}$ being analogous). Let $0\to D_1\to D_2\to D_3\to 0$ be an exact sequence in of isocrsyals over $K_{0,E}$. As the inclusion $B_{e,E}\hookrightarrow B_E^+[1/t_E]$ is faithfully flat\footnote{As $B_{e,E}$ is a PID and $B_E^+[1/t_E]$ is a domain, the map is flat. It remains to show that $\m B_E^+[1/t_E]\ne (1)$ for each $\m\in \mathrm{Max}(B_{e,E})$. By \cite[Th\' eor\` eme 6.5.2]{FFCurves}, such $\m$ is generated by $t'/t_E$ for some $t'\in (B_E^+)^{\varphi_q=\pi}\setminus Et_E$. Now $t'/t$ is a not a unit in $B_E^+[1/t_E]$ as otherwise it would be already a unit in $(B_E^+[1/t_E])^{\varphi_q=1}=B_{e,E}$.}, it suffices to show that the induced sequence after applying $V_{\mathrm{cris},E}(\cdot)\otimes_{B_{e,E}}B_E^+[1/t_E]$ is exact. We are now done because this functor is naturally identified with $(\cdot)\otimes_{K_{0,E}}B_E^+[1/t_E]$.

(3) Let $0\to M_1\to M_2\to M_3\to 0$ be an exact sequence in $\mathrm{Rep}_{B_{e,E}}(G_K)$ with $M_2$ crystalline. Consider the commutative diagram 
\begin{displaymath}
\begin{tikzcd}
0 \ar[r] & V_{\mathrm{cris},E}(D_{\mathrm{cris},E}(M_1))\ar[d,hook]\ar[r] &  V_{\mathrm{cris},E}(D_{\mathrm{cris},E}(M_2))\ar[d,"\simeq"]\ar[r] &  V_{\mathrm{cris},E}(D_{\mathrm{cris},E}(M_3))\ar[d,hook] \\ 
0 \ar[r] & M_1 \ar[r] & M_2\ar[r] & M_3\ar[r] & 0
\end{tikzcd}
\end{displaymath}
As the middle vertical arrow is an isomorphism, a simple diagram chasing shows that the two outer maps are also isomorphisms, i.e. $M_1$ and $M_2$ are crystalline, as wanted.

The other claims can be proved e.g. using Lemma \ref{crystalline iff M trivial}.
\end{proof}
Recall that we have a natural functor $(D,\varphi_q)\mapsto \E(D,\varphi_q)$ from $\vect(K_{0,E})$ to the category of $G_K$-equivariant vector bundles on $X_E$, where $\E(D,\varphi_q)$ is the $\O_{X_E}$-module associated to the graded module 
\begin{displaymath}
    \bigoplus_{n\geq 0}(D\otimes_{K_{0,E}}B_E^+)^{\varphi_q=\pi^n}.
\end{displaymath}
By Beauville--Laszlo glueing (applied to the locus ${\infty_{\tau_0}}\hookrightarrow X_E$), the datum of a $G_K$-equivariant vector bundle on $X_E$ is equivalent to the data of a triple $(M_e,M_{\mathrm{dR}}^+,u)$ where $M_e\in \mathrm{Rep}_{B_{e,E}}(G_K), M_{\mathrm{dR}}^+\in \mathrm{Rep}_{B_{\mathrm{dR}}^+}(G_K)$, and $u$ is a $G_K$-equivariant isomorphism $M_e\otimes B_{\mathrm{dR}}\xrightarrow{\sim}M_{\mathrm{dR}}^+[1/t_E]$. In terms of this description, $\E(D,\varphi_q)$ corresponds to the triple $(V_{\mathrm{cris},E}(D),D_K\otimes_{K}B_{\mathrm{dR}}^+,\iota)$ (with $D_K:=D\otimes_{K_{0,E}}K$, and $\iota$ being the natural isomorphism). In particular, by definition, a $G_K$-equivariant vector bundle $\E$ on $X_E$ is crystalline if and only if there exists $(D,\varphi_q)$ so that there is a $G_K$-equivariant isomorphism $\E|_{X_E\setminus\{\infty_{\tau_0}\}}\cong \E(D,\varphi_q)|_{X_E\setminus\{\infty_{\tau_0}\}}$.
\begin{lem}\label{flat Bdr}
Let $V$ be a continuous semilinear representation of $G_K$ on a finite free ${B}_{\mathrm{dR}}^+$-module. Then $V$ is trivial if and only if $V\otimes_{{B}_{\mathrm{dR}}^+}C$ is trivial as a $C$-semilinear representation of ${G_K}.$ 
\end{lem}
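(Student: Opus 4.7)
The plan is to perform a $t$-adic devissage on $V$ and invoke Tate's vanishing theorem. The forward direction is immediate: if $V \cong (B_{\mathrm{dR}}^+)^d$ with $G_K$ acting via the tautological action on $B_{\mathrm{dR}}^+$, then $V \otimes_{B_{\mathrm{dR}}^+} C \cong C^d$ is visibly trivial.

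For the converse, suppose $\bar V := V \otimes_{B_{\mathrm{dR}}^+} C$ is trivial, and fix a basis $\bar e_1, \ldots, \bar e_d$ of $\bar V^{G_K}$. Since $B_{\mathrm{dR}}^+$ is a complete discrete valuation ring with uniformizer $t$ and residue field $C$, and since the $G_K$-action on $t$ is via the cyclotomic character, the graded pieces of the $t$-adic filtration on $V$ are
\[
t^n V / t^{n+1} V \;\cong\; \bar V(n) \;\cong\; C(n)^d.
\]

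I would then inductively construct compatible $G_K$-invariant lifts $e_i^{(n)} \in V/t^{n+1}V$ of $\bar e_i$. Suppose such invariants have been produced at level $n$; pick arbitrary $B_{\mathrm{dR}}^+$-lifts $\tilde e_i^{(n+1)} \in V/t^{n+2}V$, so the defects $c_i(g) := g(\tilde e_i^{(n+1)}) - \tilde e_i^{(n+1)}$ define continuous $1$-cocycles $G_K \to t^{n+1}V/t^{n+2}V \cong C(n+1)^d$. By Tate's theorem, $H^1_{\mathrm{cts}}(G_K, C(j)) = 0$ for $j \neq 0$, so each $c_i$ is a coboundary, and adjusting $\tilde e_i^{(n+1)}$ by the corresponding element of $t^{n+1}V/t^{n+2}V$ yields the desired $G_K$-invariant lift $e_i^{(n+1)}$. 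Passing to the inverse limit, valid because $V$ is $t$-adically complete, produces $G_K$-invariant elements $e_1, \ldots, e_d \in V$ reducing to $\bar e_i$ modulo $t$; Nakayama's lemma then ensures they form a $B_{\mathrm{dR}}^+$-basis, exhibiting $V$ as trivial.

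The main technical point will be handling continuity carefully---verifying that the $c_i$ are continuous (which follows from the assumed continuity of the $G_K$-action on $V$), choosing the inductive coboundary corrections compatibly with the transition maps $V/t^{n+2}V \twoheadrightarrow V/t^{n+1}V$ so that the lifts $(e_i^{(n)})_n$ form a coherent sequence, and confirming that the resulting trivialization of $V$ is itself continuous.
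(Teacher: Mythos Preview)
Your proposal is correct and follows the standard argument: the $t$-adic d\'evissage together with Tate's vanishing $H^1_{\mathrm{cts}}(G_K,C(j))=0$ for $j\neq 0$ is exactly how one proves this. The paper itself does not give a proof but simply cites \cite[Proposition 2.18 (1)]{hengdu}, noting only that the $B_{\mathrm{dR}}^+$ appearing here (as $\widehat{(\O_{X_E})}_{\infty_{\tau_0}}$) agrees $G_K$-equivariantly with the usual one, so the standard argument applies verbatim; your write-up supplies precisely that standard argument. One minor remark: your concern about ``choosing the inductive coboundary corrections compatibly with the transition maps'' is not an issue, since at each step the correction lies in $t^{n+1}V/t^{n+2}V$ and hence automatically vanishes upon reduction to $V/t^{n+1}V$, so the coherence of the sequence $(e_i^{(n)})_n$ is built into the construction.
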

\begin{proof}
    See \cite[Proposition 2.18 (1)]{hengdu}. (The proof of \textit{loc. cit. } uses the usual $B_{\mathrm{dR}}^+$ (and the cyclotomic period $t$), but we have seen that the natural map $\widehat{(\O_{X_{\mathbf{Q}}})}_{\infty}\to \widehat{(\O_{X_E})}_{\tau_0}$ is a $G_K$-equivariant isomorphism.)
\end{proof}
\begin{lem}\label{crystalline E0 Dcris}
Let $V\in \mathrm{Rep}_E(G_K)$ be an $E$-representation of $G_K$. Then $V$ is crystalline if and only if $\dim_{K_{0,E}}(V\otimes_{E_0}B_{\mathrm{cris}})^{G_K}=\dim_E V$.
\end{lem}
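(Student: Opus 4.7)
The plan is to reduce the statement to Fontaine's classical criterion: a $\mathbf{Q}_p$-representation $W$ of $G_K$ is crystalline if and only if $\dim_{K_0}(W\otimes_{\mathbf{Q}_p}B_{\mathrm{cris}})^{G_K}=\dim_{\mathbf{Q}_p}W$. Applying this to $V$ viewed as a $\mathbf{Q}_p$-representation, and using $\dim_{\mathbf{Q}_p}V=[E:\mathbf{Q}_p]\dim_E V$, it suffices to prove the identity
\[
\dim_{K_0}(V\otimes_{\mathbf{Q}_p}B_{\mathrm{cris}})^{G_K}=[E:\mathbf{Q}_p]\cdot \dim_{K_{0,E}}(V\otimes_{E_0}B_{\mathrm{cris}})^{G_K},
\]
in particular checking that the right-hand side is well-defined as a dimension over $K_{0,E}$.

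The first step is to exploit the fact that $E_0/\mathbf{Q}_p$ is a finite unramified (hence Galois) extension with $E_0\subseteq K_0\subseteq B_{\mathrm{cris}}^{G_K}$. This yields a $G_K$-equivariant ring isomorphism
\[
E_0\otimes_{\mathbf{Q}_p}B_{\mathrm{cris}}\;\xrightarrow{\sim}\;\prod_{\sigma\in \mathrm{Gal}(E_0/\mathbf{Q}_p)}B_{\mathrm{cris}},\qquad a\otimes b\mapsto (\sigma(a)b)_\sigma,
\]
with $G_K$ acting factorwise (since each $\sigma(a)\in K_0$ is $G_K$-fixed). Tensoring with $V$ over $E_0$ and taking $G_K$-invariants then gives
\[
(V\otimes_{\mathbf{Q}_p}B_{\mathrm{cris}})^{G_K}=\prod_\sigma (V\otimes_{E_0,\sigma}B_{\mathrm{cris}})^{G_K}.
\]

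The second step is to show all $f:=[E_0:\mathbf{Q}_p]$ factors have the same $K_0$-dimension. I would use that the Frobenius $\varphi$ on $B_{\mathrm{cris}}$ commutes with the $G_K$-action and restricts on $E_0$ to a generator of $\mathrm{Gal}(E_0/\mathbf{Q}_p)$. Then $\mathrm{id}_V\otimes\varphi$ defines a $G_K$-equivariant, $\varphi$-semilinear bijection $V\otimes_{E_0,\sigma}B_{\mathrm{cris}}\xrightarrow{\sim}V\otimes_{E_0,\varphi\sigma}B_{\mathrm{cris}}$, inducing a bijection on $G_K$-invariants which carries any $K_0$-basis to a $K_0$-basis. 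Hence all factor-dimensions coincide with that of the natural factor $(V\otimes_{E_0}B_{\mathrm{cris}})^{G_K}$.

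Finally, the $K_{0,E}$-structure on $(V\otimes_{E_0}B_{\mathrm{cris}})^{G_K}$ arises from the commuting $E$-action (via $V$) and $K_0$-action (via $K_0\subseteq B_{\mathrm{cris}}^{G_K}$), which agree on the common $E_0$-substructure; setting $e:=[E:E_0]=[K_{0,E}:K_0]$, this gives $\dim_{K_0}=e\cdot\dim_{K_{0,E}}$ on this module. Combining with the identity $\dim_{K_0}(V\otimes_{\mathbf{Q}_p}B_{\mathrm{cris}})^{G_K}=f\cdot\dim_{K_0}(V\otimes_{E_0}B_{\mathrm{cris}})^{G_K}$ from the second step and $ef=[E:\mathbf{Q}_p]$ yields the displayed identity. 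I do not anticipate a serious obstacle; the argument amounts to unramified Galois descent on $B_{\mathrm{cris}}$, and the one place requiring attention is the bookkeeping between $K_0$- and $K_{0,E}$-dimensions.
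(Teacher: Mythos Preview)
Your approach is essentially the same as the paper's: both decompose $(V\otimes_{\mathbf{Q}_p}B_{\mathrm{cris}})^{G_K}$ along $E_0\otimes_{\mathbf{Q}_p}B_{\mathrm{cris}}\simeq\prod_i B_{\mathrm{cris}}$ and use that the crystalline Frobenius cycles the factors. The paper phrases the per-factor step via $(E,G_K)$-regularity of $E\otimes_{E_0,\varphi_p^i}B_{\mathrm{cris}}$, while you reduce everything to the classical $\mathbf{Q}_p$-criterion; both work.

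There is one slip in your second step: the map $\mathrm{id}_V\otimes\varphi\colon V\otimes_{E_0,\sigma}B_{\mathrm{cris}}\to V\otimes_{E_0,\varphi\sigma}B_{\mathrm{cris}}$ is \emph{not} a bijection, because $\varphi$ is injective but not surjective on $B_{\mathrm{cris}}$. The fix is immediate: injectivity of $\varphi$ gives, after taking $G_K$-invariants and linearizing, inequalities
\[
\dim_{K_0}(V\otimes_{E_0,\sigma}B_{\mathrm{cris}})^{G_K}\ \leq\ \dim_{K_0}(V\otimes_{E_0,\varphi\sigma}B_{\mathrm{cris}})^{G_K}
\]
for each $\sigma$. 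Since $\varphi|_{E_0}$ generates the cyclic group $\mathrm{Gal}(E_0/\mathbf{Q}_p)$, these inequalities form a cycle and are therefore all equalities; finiteness of each term is guaranteed by the classical bound $\dim_{K_0}(V\otimes_{\mathbf{Q}_p}B_{\mathrm{cris}})^{G_K}\leq\dim_{\mathbf{Q}_p}V$. With this correction the rest of your argument goes through unchanged.
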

\begin{proof}
    We have 
    \begin{align*}
        D_{\mathrm{cris}}(V) = (V\otimes_{\mathbf{Q}_p}B_{\mathrm{cris}})^{G_K} & =(V\otimes_{E_0}E_0\otimes_{\mathbf{Q}_p}B_{\mathrm{cris}})^{G_K}\\
        & =\bigoplus_{0\leq i\leq f-1}(V\otimes_{E_0,\varphi_p^i}B_{\mathrm{cris}})^{G_K},
    \end{align*}
    For each $i$, we have $\dim_{E\otimes_{E_0,\varphi_p^i}K_0}(V\otimes_{E_0,\sigma_p^i}B_{\mathrm{cris}})^{G_K}\leq \dim_EV$ with equality if and only if the $E\otimes_{E_0,\varphi_p^i}B_{\mathrm{cris}}$-representation $V\otimes_{E_0,\varphi_p^i}B_{\mathrm{cris}}$ is trivial\footnote{This follows from the usual property of admissible representations, and the fact that $E\otimes_{E_0,\varphi_p^i}B_{\mathrm{cris}}$ is $(E,G_K)$-regular, which in turn can be proved in exactly the same way as in the case $E=\mathbf{Q}_p$.}. As the latter can be obtained from $V\otimes_{E_0}B_{\mathrm{cris}}$ by extending scalars along the ($G_K$-equivariant) map $\varphi_p^i: B_{\mathrm{cris}}\to B_{\mathrm{cris}}$, it in fact suffices to require that $V\otimes_{E_0}B_{\mathrm{cris}}$ is trivial. The lemma now follows by counting dimensions. 
\end{proof}
We can now give a geometric interpretation of the notion of $E$-crystalline representations of Kisin--Ren in terms of vector bundles on the Fargues--Fontaine curve.
\begin{prop}\label{equivalence E crystalline}
Let $V\in \mathrm{Rep}_E(G_K)$. Then the following are equivalent:
\begin{itemize}
    \item[\emph{(1)}] The $G_K$-equivariant vector bundle $V\otimes_E \O_X$ is crystalline in the sense of Definition \ref{crystalline definition}. 
    \item[\emph{(2)}] $V$ is $E$-crystalline.
\end{itemize}
\end{prop}
\begin{example}
Let $V=E(1)$ denotes the $E$-representation of $G_K$ given by the Lubin--Tate character $\chi_{\mathrm{LT}}: G_K\to \O_E^\times$ associated to the uniformizer $\pi$ (and the embedding $\tau_0: E\hookrightarrow K$). Then $V$ satisfies condition (1) of Proposition \ref{equivalence E crystalline}. Indeed, $V\otimes_E B_E^+[1/t_E]$ has a $G_K$-invariant basis given by $v\otimes t_E^{-1}$ where $v$ is an $E$-basis in $V$. More generally, this holds for the $p$-adic Tate module of any $\pi$-divisible $\O_E$-module over $\O_K$; see Lemma \ref{pi divisible and analytic} (the previous example being the case of the Lubin--Tate formal $\O_E$-module associated to $\pi$).
\end{example}
\begin{proof}[Proof of Proposition \ref{equivalence E crystalline}]
Assume (1). By Lemma \ref{crystalline iff M trivial}, $V\otimes_E B_E^+[1/t_E]$ is trivial as a $B_E^+[1/t_E]$-representation of $G_K$. As $t_E$ is invertible in $\widehat{(X_E)}_{\infty_{\tau}}$ for each $\tau \ne \tau_0$ (as $V^+(t_E)={\infty_{\tau_0}}$), by extending scalars, $\bigoplus_{\tau\ne \tau_0}V\otimes_{E,\tau}B_{\mathrm{dR}}^+$ is also trivial (as a $B_{\mathrm{dR}}^+$-representation), whence the same is true for $\oplus_{\tau\ne \tau_0}V\otimes_{E,\tau}C$.

Moreover, by extending scalars along $B_E^+[1/t_E]\hookrightarrow B_{\mathrm{cris}}\otimes_{E_0}E$\footnote{This inclusion is defined as follows. By \cite[Lem. 9.17]{Col02}, $t_E$ divides $t$ in $B_{\max}^+\otimes_{E_0}E$, and so we have $B_E^+[1/t_E]=(B_{\mathbf{Q}_p}^+\otimes_{E_0}E)[1/t_E]\subseteq B_{\max}^+[1/t]\otimes_{E_0}E=B_{\max}\otimes_{E_0}E$. As $\varphi_p(B_{\max})\subseteq B_{\mathrm{cris}}$ and $\varphi_q$ is an automorphism on $B_E^+[1/t_E]$, we also have $B_E^+[1/t_E]\subseteq B_{\mathrm{cris}}\otimes_{E_0}E$, as wanted.}, $V\otimes_{E_0}B_{\mathrm{cris}}$ is also trivial as a $B_{\mathrm{cris}}\otimes_{E_0}E$-representation, and hence taking $G_K$-invariants yields $\dim_{K_{0,E}}(V\otimes_{E_0}B_{\mathrm{cris}})^{G_K}=\dim_EV$. By Lemma \ref{crystalline E0 Dcris}, $V$ is crystalline, as wanted.

Conversely, assume (2) holds. Let $D:=D_{\mathrm{cris},E}(V)=(V\otimes_E B_E^+[1/t_E])^{G_K}$. We need to show that the natural inclusion 
\begin{align}\label{inclusion same rank}
    V_{\mathrm{cris},E}(D)=(D\otimes_{K_{0,E}}B_E^+[1/t_E])^{\varphi_q=1}\hookrightarrow V\otimes_{E} B_{e,E}
\end{align}
is an isomorphism. We first show that the source and target have the same rank, i.e. $\dim_E V=\dim D$. As $B_E^+[1/t_E]\subseteq B_{\mathrm{cris}}\otimes_{E_0}E$, we always have $D\subseteq (V\otimes_{E_0}B_{\mathrm{cris}})^{G_K}$, and the latter has dimension $\dim_E V$ by Lemma \ref{crystalline E0 Dcris}. We will show that $D=(V\otimes_{E_0} B_{\mathrm{cris}})^{G_K}$. As $\mathrm{Rep}_{B_{e,E}}^{\mathrm{cris}}(G_K)$ is stable under tensor products by Lemma \ref{crystalline stable tensor}, by replacing $V$ with $V\otimes_E E(n)$ for $n\ll 0$, we may assume that the Hodge--Tate weights of $V$ are all non-negative. In particular, we have 
\begin{displaymath}
 (V\otimes_{E_0}B_{\mathrm{cris}})^{G_K} =(V\otimes_{E_0}B_{\mathrm{cris}}^+)^{G_K}.  
\end{displaymath}
Moreover, as $\varphi_q$ is an automorphism on $(V\otimes_{E_0}B_{\mathrm{cris}}^+)^{G_K}$, we deduce that 
\begin{align*}
    (V\otimes_{E_0}B_{\mathrm{cris}}^+)^{G_K} &=(V\otimes_{E_0}\cap_{n\geq 0}\varphi_q^n(B_{\mathrm{cris}}^+))^{G_K}\\
    & =(V\otimes_{E_0}B_{\mathbf{Q}_p}^+)^{G_K}\\
    &=(V\otimes_{E}(E\otimes_{E_0}B_{\mathbf{Q}_p}^+))^{G_K}\\
    &=(V\otimes_{E}B_{{E}}^+)^{G_K}\subseteq D,
\end{align*}
as wanted. (For the second equality, see e.g. \cite[\textsection 1.10]{FFCurves}.) 

Next, we claim that the induced map 
\begin{align}\label{completed stalks}
    \bigoplus_{\tau\ne \tau_0}D\otimes_{K_{0,E},\tau}B_{\mathrm{dR}}^+\hookrightarrow \bigoplus_{\tau \ne \tau_0}V\otimes_{E,\tau}B_{\mathrm{dR}}^+ 
\end{align}
on completed stalks is an isomorphism. As $V$ is $E$-crystalline, both the source and target are trivial as a $B_{\mathrm{dR}}^+$-representation by Lemma \ref{flat Bdr}. As any such representation $W$ satisfies $W=(W[1/t])^{G_K}\otimes_K B_{\mathrm{dR}}^+$, it suffices to observe that \ref{completed stalks} becomes an isomorphism after taking $\otimes B_{\mathrm{dR}}$ (being an injection between $B_{\mathrm{dR}}$-vector spaces of the same (finite) dimension). 

Thus, the cokernel of \ref{inclusion same rank} is a torsion equivariant coherent sheaf $\F$ on $X_E\setminus \{\infty_{\tau_0}\}$ satisfying $\widehat{\F}_{\infty_{\tau}}=0$ for all $\tau\ne \tau_0$. As $\{\infty_{\tau}\}_{\tau}=\pi^{-1}(\infty)$ is precisely the set of closed points in $X_E$ with finite $G_K$-orbit, $\F$ must be supported on the set $\{\infty_{\tau}\}_{\tau\ne \tau_0}$, and hence must be zero, as claimed.
\end{proof}
\begin{remark}\label{analytic crys natural notion}
Combining with Remark \ref{Bcris E}, we see that an object $V\in \mathrm{Rep}_E(G_K)$ is $E$-crystalline if and only if $V\otimes_E B_{\mathrm{cris},E}$ is trivial as a $B_{\mathrm{cris},E}$-representation. Thus, the notion of $E$-crystalline representations is in some sense indeed a natural extension of the usual notion for $\mathbf{Q}_p$-representations.  
\end{remark}
\begin{lem}\label{filtration and lattice}
Let $V$ be a finite dimensional $K$-vector space. Then the association $\mathrm{Fil}^\bullet V\mapsto \mathrm{Fil}^0(V\otimes_K B_{\mathrm{dR}})$ gives a bijection between the set of (finite, separated, exhausted) descreasing filtrations on $V$, and the set of $G_K$-equivariant $B_{\mathrm{dR}}^+$-lattices in $V\otimes_K B_{\mathrm{dR}}$. The inverse bijection is given by $W\mapsto (t_E^\bullet W)^{G_K}$.
\end{lem}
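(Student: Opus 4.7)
My approach is to adapt the classical dictionary between filtrations on $V$ and $G_K$-stable $B_{\mathrm{dR}}^+$-lattices in $V\otimes_K B_{\mathrm{dR}}$ from the cyclotomic case to the Lubin--Tate setting. The key structural input is that $t_E$ is a $G_K$-eigenvector with eigenvalue $\chi_{\mathrm{LT}}: G_K\to \O_E^\times\subseteq (B_{\mathrm{dR}}^+)^\times$, so each $t_E^n B_{\mathrm{dR}}^+$ is $G_K$-stable.

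For well-definedness of the forward map, I would pick a filtration-adapted $K$-basis $e_1,\ldots,e_d$ of $V$ with jumps $n_1\geq\cdots\geq n_d$ (so $\mathrm{Fil}^k V=\bigoplus_{n_j\geq k}K e_j$); then the convolution $W:=\mathrm{Fil}^0(V\otimes_K B_{\mathrm{dR}})=\sum_i \mathrm{Fil}^i V\otimes_K t_E^{-i}B_{\mathrm{dR}}^+$ collapses to $\bigoplus_j t_E^{-n_j}B_{\mathrm{dR}}^+ e_j$, visibly a $G_K$-stable $B_{\mathrm{dR}}^+$-lattice. For the inverse map, $B_{\mathrm{dR}}^{G_K}=K$ (and hence $(V\otimes_K B_{\mathrm{dR}})^{G_K}=V$ by expanding in any $K$-basis) makes $\mathrm{Fil}^i V:=(t_E^i W)^{G_K}$ a well-defined $K$-subspace of $V$; decreasing-ness, separatedness ($\bigcap_i t_E^i W=0$) and exhaustiveness ($V\otimes_K B_{\mathrm{dR}}^+\subseteq t_E^{-N}W$ for $N\gg 0$) follow from $W$ being a lattice.

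For the roundtrip forward-then-backward, I would use the above basis to compute $(t_E^j W)^{G_K}=\bigoplus_j(t_E^{j-n_j}B_{\mathrm{dR}}^+)^{G_K} e_j=\bigoplus_{n_j\geq j}K e_j=\mathrm{Fil}^j V$. The essential input is the computation $(t_E^k B_{\mathrm{dR}}^+)^{G_K}=K$ for $k\leq 0$ and $=0$ for $k\geq 1$, which follows inductively on $k$ from the exact sequences $0\to t_E^{k+1}B_{\mathrm{dR}}^+\to t_E^k B_{\mathrm{dR}}^+\to C(\chi_{\mathrm{LT}}^k)\to 0$ combined with the Hodge--Tate vanishing $C(\chi_{\mathrm{LT}}^k)^{G_K}=0$ for $k\neq 0$, a standard Sen-theoretic fact for Lubin--Tate characters.

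The main obstacle is the roundtrip backward-then-forward: showing $W=W':=\sum_i t_E^{-i}\mathrm{Fil}^i V\otimes_K B_{\mathrm{dR}}^+$ (the inclusion $W'\subseteq W$ being immediate). My plan is to construct a $B_{\mathrm{dR}}^+$-basis of $W$ of shape $\{t_E^{-n_j}v_j\}$ with $\{v_j\}$ a $K$-basis of $V$, since then $v_j\in(t_E^{n_j}W)^{G_K}=\mathrm{Fil}^{n_j}V$ forces $W\subseteq W'$. Existence of such a basis proceeds in two steps. First, by elementary divisors over the DVR $B_{\mathrm{dR}}^+$ applied to the lattices $W$ and $L_0:=V\otimes_K B_{\mathrm{dR}}^+$ in $V\otimes_K B_{\mathrm{dR}}$, I obtain a $B_{\mathrm{dR}}^+$-basis of $W$ of the form $\{t_E^{-n_j}b_j\}$ for certain $b_j\in L_0$ whose reductions modulo $t_E$ form a $C$-basis of $V\otimes_K C$. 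Second, I would upgrade the $b_j$'s to lie in $V$ via Ax--Sen--Tate descent: for each $s$, the graded piece $(W\cap t_E^s L_0)/(W\cap t_E^{s+1}L_0)$ is a $G_K$-stable $C$-subspace of the trivial $C$-semilinear representation $V\otimes_K C(\chi_{\mathrm{LT}}^s)$, hence descends to $U_s\otimes_K C(\chi_{\mathrm{LT}}^s)$ for a unique $K$-subspace $U_s\subseteq V$ (using only $C^{G_K}=K$, as $G_K$-stability of a $C$-subspace is insensitive to the character twist). Iteratively lifting a $K$-basis of each $U_s$ along the $t_E$-adic filtration of $W$ and invoking the $t_E$-adic completeness of $B_{\mathrm{dR}}^+$ assembles the required basis $\{t_E^{-n_j}v_j\}$, completing the proof.
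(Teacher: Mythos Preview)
The paper's own proof is nothing more than a citation to \cite[Proposition 10.4.3]{FFCurves}, so you are in effect supplying the argument the paper omits. Your outline is correct and follows the standard route (and, as you implicitly note, the fact that $t_E$ and the cyclotomic $t$ differ by a unit in $B_{\mathrm{dR}}^+$ means one could equally well invoke the classical Tate vanishing $C(k)^{G_K}=0$ rather than its Lubin--Tate variant).

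The one place that deserves more detail is the final ``iterative lifting'' step: knowing that $\bar v\in U_s\otimes_K C$ only gives $t_E^s v\in W+t_E^{s+1}L_0$, not $t_E^s v\in W$. To close this gap one shows by induction on $s'\geq s$ that $t_E^s v\in W+t_E^{s'}L_0$; the inductive step writes $t_E^s v=w+t_E^{s'}l$ and uses the $G_K$-equivariance of $W$ to see that the class of $l$ in $(V/U_{s'})\otimes_K C$ is a $\chi_{\mathrm{LT}}^{\,s-s'}$-eigenvector, hence zero by the same Sen/Tate vanishing you already invoked. Since $W\supseteq t_E^N L_0$ for $N\gg 0$, this yields $t_E^s v\in W$. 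With this made explicit your argument is complete.
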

\begin{proof}
    See \cite[Proposition 10.4.3]{FFCurves}.
\end{proof}
Combining the above lemma with Beauville--Lazlo's glueing theorem, we deduce the following result.
\begin{lem}
The functor 
\begin{align*}
\mf(K) & \xrightarrow{\sim} \mathrm{Fib}^{G_K,\mathrm{cris}}(X_E)\\
    (D,\varphi_q,\mathrm{Fil}^\bullet D_K) &\mapsto \E(D,\varphi_q,\mathrm{Fil}^\bullet D_K)
\end{align*}
defines an equivalence onto the categories of crystalline $G_K$-equivariant vector bundles on $X_E$. Here $\E(D,\varphi_q,\mathrm{Fil}^\bullet D_K)$ is the modification of $\E(D,\varphi_q)$ at $\infty_{\tau_0}$, defined using the $G_K$-stable $B_{\mathrm{dR}}^+$-lattice $\mathrm{Fil}^\bullet(D_K\otimes_K B_{\mathrm{dR}})$.
\end{lem}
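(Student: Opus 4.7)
The plan is to combine Beauville--Laszlo glueing at $\infty_{\tau_0}$ with the equivalence $V_{\mathrm{cris},E}\colon \vect(K_{0,E})\xrightarrow{\sim}\mathrm{Rep}_{B_{e,E}}^{\mathrm{cris}}(G_K)$ from Lemma \eqref{crystalline stable tensor}(1) and the dictionary of Lemma \eqref{filtration and lattice} between filtrations and $G_K$-equivariant $B_{\mathrm{dR}}^+$-lattices.

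First I would spell out the Beauville--Laszlo presentation recalled just before the statement: a $G_K$-equivariant vector bundle on $X_E$ is the same as a triple $(M_e,M_{\mathrm{dR}}^+,u)$, where $M_e\in \mathrm{Rep}_{B_{e,E}}(G_K)$, $M_{\mathrm{dR}}^+\in \mathrm{Rep}_{B_{\mathrm{dR}}^+}(G_K)$ (here $B_{\mathrm{dR}}^+=\widehat{(\O_{X_E})}_{\infty_{\tau_0}}$), and $u$ is a $G_K$-equivariant isomorphism $M_e\otimes_{B_{e,E}} B_{\mathrm{dR}}\xrightarrow{\sim} M_{\mathrm{dR}}^+[1/t_E]$. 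By the very definition of a crystalline equivariant vector bundle on $X_E$, such a triple comes from a crystalline bundle exactly when $M_e\in \mathrm{Rep}_{B_{e,E}}^{\mathrm{cris}}(G_K)$; with no condition on $M_{\mathrm{dR}}^+$ beyond the glueing datum.

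Next, Lemma \eqref{crystalline stable tensor}(1) rewrites the $M_e$-datum as an isocrystal $(D,\varphi_q)\in \vect(K_{0,E})$ via $M_e=V_{\mathrm{cris},E}(D)$. Under this identification, the key isomorphism \eqref{v cris fully} and its analogue on completed stalks at $\infty_{\tau_0}$ give a natural $G_K$-equivariant identification
\[
M_e\otimes_{B_{e,E}}B_{\mathrm{dR}}\;\cong\;D\otimes_{K_{0,E}}B_{\mathrm{dR}}\;=\;D_K\otimes_K B_{\mathrm{dR}},
\]
where the last equality uses the canonical $\overline{K}$-section $\overline{K}\hookrightarrow B_{\mathrm{dR}}^+$ recalled earlier. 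Thus the glueing datum $(M_{\mathrm{dR}}^+,u)$ becomes precisely a $G_K$-stable $B_{\mathrm{dR}}^+$-lattice in $D_K\otimes_K B_{\mathrm{dR}}$. By Lemma \eqref{filtration and lattice} such lattices are in natural bijection with decreasing, separated, exhaustive filtrations $\mathrm{Fil}^\bullet D_K$ on $D_K$, the correspondence being $\mathrm{Fil}^\bullet D_K \mapsto \mathrm{Fil}^0(D_K\otimes_K B_{\mathrm{dR}})$.

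Putting these three equivalences together yields a natural bijection between objects on both sides, and the construction is functorial, so full faithfulness is automatic from full faithfulness of each step (Beauville--Laszlo, $V_{\mathrm{cris},E}$, and the filtration/lattice correspondence). Finally one checks that the bundle attached by this chain of equivalences to the triple $(D,\varphi_q,\mathrm{Fil}^\bullet D_K)$ agrees with the explicit modification $\E(D,\varphi_q,\mathrm{Fil}^\bullet D_K)$ of $\E(D,\varphi_q)$ at $\infty_{\tau_0}$ prescribed in the statement; this is just unwinding the Beauville--Laszlo recipe. The only mildly delicate point is verifying that the identification $M_e\otimes B_{\mathrm{dR}}\cong D_K\otimes_K B_{\mathrm{dR}}$ is indeed $G_K$-equivariant when one uses the $\overline{K}$-section to write $B_{\mathrm{dR}}^+\otimes_{\mathbf{Q}_p}E\cong\prod_\tau B_{\mathrm{dR}}^+$, but this is the content of the discussion immediately preceding Lemma \eqref{Galois invariants}.
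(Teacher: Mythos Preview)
Your proof is correct and follows exactly the approach the paper intends: the paper simply says ``Combining the above lemma with Beauville--Laszlo's glueing theorem, we deduce the following result,'' and you have carefully unpacked this into its three constituent equivalences (Beauville--Laszlo at $\infty_{\tau_0}$, the equivalence $V_{\mathrm{cris},E}\colon \vect(K_{0,E})\simeq \mathrm{Rep}_{B_{e,E}}^{\mathrm{cris}}(G_K)$, and the filtration/lattice dictionary of Lemma \eqref{filtration and lattice}). The identification $M_e\otimes_{B_{e,E}}B_{\mathrm{dR}}\cong D_K\otimes_K B_{\mathrm{dR}}$ you flag as ``mildly delicate'' is precisely the natural isomorphism $\iota$ that the paper records just before Lemma \eqref{flat Bdr} when it writes $\E(D,\varphi_q)\leftrightarrow(V_{\mathrm{cris},E}(D),D_K\otimes_K B_{\mathrm{dR}}^+,\iota)$.
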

\subsection{``Weakly admissible implies admissible''}
In this subsection, we finish the proof that $E$-crystalline representations are equivalent to weakly admissible filtered isocrystals over $K$ (Theorem \ref{weakly admissible vs crys rep}).

We begin by recalling the notion of weak admissibility for filtered $\varphi_q$-modules. Namely, for a 1-dimensional object $D$ in $\mf(K)$, we pick a basis vector $v\in D$ and let $t_N(D):=v_{\pi}(\alpha)$ where $\alpha\in (K_{0,E})^\times$ is such that $v_q(v)=\alpha v$. We let $t_H(D_K)$ (or more precisely, $t_H(\mathrm{Fil}^\bullet  D_K)$) be the unique integer $i\in\mathbf{Z}$ such that $\mathrm{Fil}^i D_K=D_K$ and $\mathrm{Fil}^{i+1}D_K=0$. For a general $D$, we define $t_H(D_K):=t_H(\det(D_K))$ and $t_N(D):=t_N(\det(D))$. We say that an object $D$ in $\mf(K)$ is weakly admissible if $t_H(D)=t_N(D)$ and $t_H(D')\leq t_N(D')$ for all subobjects $D'\subseteq D$. As in the case $E=\mathbf{Q}_p$, the degree function and the rank function
\begin{align*}
    \deg: (D,\varphi_q,\mathrm{Fil}^\bullet D_K) &\mapsto t_H(D_K)-t_N(D,\varphi_q),\\
    \mathrm{rank}: (D,\varphi_q,\mathrm{Fil}^\bullet D_K)&\mapsto \mathrm{rank}(D,\varphi_q)
\end{align*}
make $\mf(K)$ into a slope category with slope function $\mu:=\deg/\mathrm{rank}$. In particular, each object in $\mf(K)$ admits a unique Harder--Narasimhan filtration, and the resulting abelian subcategory of semistable objects of slope $0$ is precisely formed by those weakly admissible objects in the preceding sense.

For ease of notation, in what follows we will simply write $D$ for a filtered $\varphi_q$-module over $K$, and $\E(D)$ for $\E(D,\varphi_q,\mathrm{Fil}^\bullet)$.

\begin{prop}[{\cite[Proposition 10.5.6]{FFCurves}}]\label{admissible slopes 0} Let $D$ be a filtered $\varphi_q$-module over $K$.

    \emph{(1)} We have $\mathrm{rank}(D)=\mathrm{rank}(\E(D)), \deg(D)=\deg(\E(D))$, and $\mu(D)=\mu(\E(D))$.

    \emph{(2)} If $0=D_0\subsetneq \ldots \subsetneq D_r=D$ is the Harder--Narasimhan filtration of $D$, then that of $\E(D,\varphi_q,\mathrm{Fil}^\bullet D_K)$ is given by 
    \begin{displaymath}
        0=\E(D_0)\subsetneq \ldots\subsetneq \E(D_r)=\E(D).
    \end{displaymath}
    In particular, $D$ is weakly admissible if and only if $\E(D)$ is semistable of slope $0$.
\end{prop}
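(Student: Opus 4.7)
The overall strategy is to first verify the numerical assertion in part (1) by reducing to rank one via determinants, and then deduce the Harder--Narasimhan statement in part (2) formally from the exactness and slope-preservation of the functor $\E$. A key preliminary is that $\E: \mf(K) \to \mathrm{Fib}^{G_K}(X_E)$ is exact: by Beauville--Laszlo gluing, $\E(D)$ is determined by the pair $(V_{\mathrm{cris},E}(D,\varphi_q), \mathrm{Fil}^0(D_K \otimes_K B_{\mathrm{dR}}))$, and both ingredients are exact functors -- the first by Lemma \eqref{crystalline stable tensor}(2), the second via the filtration-to-lattice bijection of Lemma \eqref{filtration and lattice}.

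For part (1), the rank assertion is immediate since, away from $\infty_{\tau_0}$, we have $\E(D) \cong V_{\mathrm{cris},E}(D,\varphi_q)$, whose $B_{e,E}$-rank equals $\dim_{K_{0,E}} D$ by the proof of Proposition \eqref{adjoints V cris}. For the degree, both $\deg \E(D)$ and $t_H(D_K) - t_N(D)$ are additive in short exact sequences and hence are determined by their values on the determinant; since the formation of $\det$ commutes with $\E$, this reduces to the case $\dim_{K_{0,E}} D = 1$. In that case, choosing a basis $v$ with $\varphi_q(v) = \pi^a u v$ for some unit $u \in W_{\O_E}(k)[1/\pi]^\times$, one identifies $\E(D,\varphi_q)$ with the standard line bundle of degree $-a = -t_N(D)$, while the filtration (jumping at $t_H(D_K)$) induces a modification at $\infty_{\tau_0}$ shifting the degree by exactly $+t_H(D_K)$, yielding the desired equality.

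For part (2), given the Harder--Narasimhan filtration $0 = D_0 \subsetneq \cdots \subsetneq D_r = D$, exactness of $\E$ together with part (1) produces a filtration by subbundles $0 = \E(D_0) \subsetneq \cdots \subsetneq \E(D_r) = \E(D)$ whose successive quotients $\E(D_i/D_{i-1})$ have strictly decreasing slopes. Identifying this with the HN filtration of $\E(D)$ thus reduces to showing that $D$ semistable implies $\E(D)$ semistable of the same slope, which by tensoring with a suitable rank-one object in $\mf(K)$ further reduces to the slope-zero case: $D$ is weakly admissible if and only if $\E(D)$ is semistable of slope $0$. The easy direction is that any sub-filtered-$\varphi_q$-module $D' \subseteq D$ produces, by exactness, a subbundle $\E(D') \subseteq \E(D)$ of slope $t_H(D'_K) - t_N(D') \leq 0$.

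The main obstacle is the converse: given an arbitrary $G_K$-equivariant subbundle $\F \subseteq \E(D)$, I would construct a sub-filtered-$\varphi_q$-module $D' \subseteq D$ with $\deg(D') \geq \deg \F$ by setting $D' := D_{\mathrm{cris},E}(\F|_{X_E \setminus \{\infty_{\tau_0}\}})$ and equipping $D'_K$ with the filtration whose associated $B_{\mathrm{dR}}^+$-lattice is the completed stalk $\widehat{\F}_{\infty_{\tau_0}}$; the inclusion $\widehat{\F}_{\infty_{\tau_0}} \subseteq \widehat{\E(D)}_{\infty_{\tau_0}}$ ensures this filtration is bounded by $\mathrm{Fil}^\bullet(D_K \otimes B_{\mathrm{dR}})$, so $D' \hookrightarrow D$ is a morphism in $\mf(K)$. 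A direct comparison using part (1) then yields $\deg \F \leq \deg \E(D') = \deg D'$, and weak admissibility of $D$ forces $\deg D' \leq 0$, hence $\mu(\F) \leq 0$. The delicate point is handling possibly non-crystalline $\F$, where $D' = D_{\mathrm{cris},E}(\F)$ may have strictly smaller generic rank than $\F$; this is addressed via the slope formalism on $X_E$ and the classification of semistable bundles, following the argument of \cite[Proposition~10.5.6]{FFCurves} adapted to the $E$-linear setting.
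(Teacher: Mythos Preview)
Your part (1) is correct and close to the paper's argument; the paper computes $\deg \E(D)$ directly via the modification formula $\deg(\E(D)) = \deg(\E(D,\varphi_q)) - [D_K \otimes B_{\mathrm{dR}}^+ : \mathrm{Fil}^0(D_K \otimes B_{\mathrm{dR}})]$ rather than passing to determinants, but this is a cosmetic difference.

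For part (2), however, your argument has a genuine gap at exactly the point you flag as ``delicate.'' You try to show $D$ semistable $\Rightarrow \E(D)$ semistable by testing against a $G_K$-equivariant subbundle $\F \subseteq \E(D)$ and constructing $D' := D_{\mathrm{cris},E}(\F|_{X_E \setminus \{\infty_{\tau_0}\}})$, then worry that $\F$ may fail to be crystalline so that $\dim D' < \mathrm{rank}\,\F$. But this concern is unfounded: since $\F$ is a \emph{subbundle}, its restriction $\F|_{X_E \setminus \{\infty_{\tau_0}\}}$ is a $G_K$-stable \emph{direct summand} of $V_{\mathrm{cris},E}(D)$, and Lemma \eqref{crystalline stable tensor}(3) says precisely that direct summands of crystalline objects are crystalline. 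Hence $\F|_{X_E \setminus \{\infty_{\tau_0}\}} = V_{\mathrm{cris},E}(D')$ for some sub-$\varphi_q$-module $D' \subseteq D$ of the correct rank, and there is nothing further to do. Your final sentence deferring to ``the slope formalism and the classification of semistable bundles'' is not a substitute for this observation.

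The paper's proof in fact runs the argument in the opposite direction: rather than pushing the HN filtration of $D$ forward and checking semistability of the graded pieces, it starts from the HN filtration of $\E(D)$ (which is $G_K$-equivariant by uniqueness), uses the direct-summand argument above to show each step is of the form $\E(D_i)$ for subobjects $D_i \subseteq D$, and then invokes part (1) to conclude that the $D_i$ form the HN filtration of $D$. This avoids having to separately prove that $\E$ preserves semistability.
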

\begin{proof}
    (1) We have seen in the proof of Proposition \ref{adjoints V cris} that the functor $(D,\varphi_q)\mapsto V_{\mathrm{cris}}(D,\varphi_q)$ is rank-preserving. Thus $\mathrm{rank}(\E(D))=\mathrm{rank}(\E(D,\varphi_q))=\mathrm{rank}(D)$. It remains to show $\det(D)=\deg(\E(D))$. As $\E(D)$ is defined as the modification of $\E(D,\varphi_q)$ at $\infty_{\tau_0}$ using the lattice $\mathrm{Fil}^0(D_K\otimes_K B_{\mathrm{dR}})$, we have
    \begin{displaymath}
        \deg(\E(D))=\deg(\E(D,\varphi_q))-[D_K\otimes_K B_{\mathrm{dR}}^+:\mathrm{Fil}^0(D_K\otimes_K B_{\mathrm{dR}})]\footnote{For an effective modification $0\to \E'\to \E\to \F\to 0$ (so that $\F$ is a skyscraper sheaf, supported at $\infty_{\tau_0}$), this follows from additivity of degree: $\deg(\E)=\deg(E')+\mathrm{length}(\F)=\deg(\E)+[\widehat{(\E)}_{\infty_{\tau_0}}:\widehat{(\E')}_{\infty_{\tau_0}}]$. In general, we can choose $n\gg 0 $ so that $\widehat{(\E')}_{\infty_{\tau_0}}\subseteq t_E^{-n}\widehat{(\E)}_{\infty_{\tau_0}}$, and hence reduce to effective case.}
    \end{displaymath}
    Now $\deg(\E(D,\varphi_q))=-t_N(D,\varphi_q)$ (recall that if $(D,\varphi_q)=\O(\lambda)$, then $\E(D,\varphi_q)=\O(-\lambda)$), while it follows easily by choosing a splitting of the filtration on $D_K$ that
    \begin{displaymath}
        [D_K\otimes_K B_{\mathrm{dR}}^+:\mathrm{Fil}^0(D_K\otimes_K B_{\mathrm{dR}})]=-t_H(\mathrm{Fil}^\bullet D_K),
    \end{displaymath}
    as desired.

    (2) We follow the proof of \cite[Proposition 10.5.6]{FFCurves}. Observe firstly that by uniqueness, the Harder--Narasimhan filtration of $\E(D)$ is $G_K$-equivariant. Thus, by part (1) and definition of semistability, it suffices to show that if $\E'\subseteq \E(D)$ is a $G_K$-stable subbundle, then $\E'=\E(D')$ for a (necessarily unique) subobject $D'\subseteq D$ (as a filtered $\varphi_q$-module). As $\E'$ is a subbundle, $\E'|_{X_E\setminus\{\infty_{\tau_0}\}}$ is in particular is a Galois stable direct summand of $V_{\mathrm{cris},E}(D)$, and hence crystalline by Lemma \ref{crystalline stable tensor}. Thus, $\E'=\E(D',\varphi_q)$ on $X_E\setminus\{\infty_{\tau_0}\}$ for some $\varphi_q$-module $D'\subseteq D$. The lattice $\widehat{(\E')}_{\infty_{\tau_0}}$ determines a filtration on $D'_K$, which we claim is simply the one inherited from $D_K$. Indeed, if $\E''$ denotes the equivariant vector bundle given by this latter filtration, then it follows from the (explicit) bijection in Lemma \ref{filtration and lattice} that $\E'\subseteq \E''$. As $\E'$ is a subbundle in $\E$ (hence in $\E''$) and $\mathrm{rank}(\E')=\mathrm{rank}(\E'')=\dim D'$, we must have $\E'=\E''$. Thus, we see that $\E'=\E(D')$ for a subobject $D'\subseteq D$, as claimed.
\end{proof}
After the classification of vector bundles (\cite[Th\' eor\` eme 8.2.10]{FFCurves}), any vector bundle $\E$ on $X_E$ is of the form
\begin{displaymath}
    \E\simeq\O(\lambda_1)\oplus \ldots \oplus \O(\lambda_n)
\end{displaymath}
for a unique tuple $(\lambda_1\geq \ldots \geq \lambda_n)$ of rational numbers. As 
\begin{displaymath}
    \dim_E H^0(X,\O(\lambda))=\begin{cases} 0 \quad\text{if $\lambda<0$},\\
    1 \quad\text{if $\lambda=0$},\\
    \infty\quad\text{if $\lambda>0$},\end{cases}
\end{displaymath}
$\E$ is semistable of slope $0$ if and only if $\dim_E H^0(X_E,\E)=\mathrm{rank}(\E)$. Combining with Proposition \ref{admissible slopes 0}, we see that a filtered $\varphi_q$-module $D$ over $K$ is weakly admissible if and only if $\dim_{K_{0,E}}D=\dim_E V_E(D)$, where $V_E(D):=H^0(X_E,\E(D))=(D\otimes_{K_{0,E}}B_E^+[1/t_E])^{\varphi_q=1}\cap \mathrm{Fil}^0(D_K\otimes_K B_{\mathrm{dR}})$. 

Motivated by the case $E=\mathbf{Q}_p$, we next proceed to show that the functor $D\mapsto V_E(D)$ defines an equivalence between the category of weakly admissible filtered $\varphi_q$-modules over $K$, and the category of $E$-crystalline representations of $G_K$. 

Let $V\in\mathrm{Rep}_E(G_K)$. Define 
\begin{displaymath}
    D_{\mathrm{cris},E}(V):=(V\otimes_E B_E^+[1/t_E])^{G_K}.
\end{displaymath}
Of course, this is nothing but $D_{\mathrm{cris},E}(M)$ where $M:=V\otimes_E B_{e,E}$. In particular, we have seen that $D_{\mathrm{cris},E}(V)$ is naturally a $\varphi_q$-module over $K_{0,E}$, of dimension $\leq \dim_E V$. Via the natural inclusion $B_E^+[1/t_E]\otimes_{K_{0,E}}K\hookrightarrow B_{\mathrm{dR}}$, we can endow 
\begin{displaymath}
    D_{\mathrm{cris},E}(V)\otimes_{K_{0,E}}K\hookrightarrow (V\otimes_{E}B_{\mathrm{dR}})^{G_K}
\end{displaymath}
with the subspace filtration from $V\otimes_E B_{\mathrm{dR}}$. In this way, $D:=D_{\mathrm{cris},E}(V)$ is naturally a filtered $\varphi_q$-module over $K$.

\begin{thm}\label{weakly admissible vs crys rep}
The functor 
\begin{align*}
    D_{\mathrm{cris},E}: \mathrm{Rep}_{E}^{\mathrm{cris}}(G_K)\to \mf(K)
\end{align*}
is fully faithful. Moreover, the essential image is precisely the subcategory of weakly admissible objects.
\end{thm}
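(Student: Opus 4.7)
The plan is to realize both sides as subcategories of $G_K$-equivariant vector bundles on $X_E$, via the crystalline/filtered-isocrystal dictionary from the preceding lemma and the classification of vector bundles on $X_E$ (\cite[Théorème 8.2.10]{FFCurves}). Concretely, I would introduce the candidate quasi-inverse
\[
V_E(D):=H^0(X_E,\E(D))
\]
and show it is inverse to $D_{\mathrm{cris},E}$ on the appropriate subcategories, yielding full faithfulness and the description of the essential image simultaneously.

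For $D$ weakly admissible, Proposition \ref{admissible slopes 0} says $\E(D)$ is semistable of slope $0$; hence by the classification the evaluation map $V_E(D)\otimes_E\O_{X_E}\to\E(D)$ is a $G_K$-equivariant isomorphism with $\dim_E V_E(D)=\dim_{K_{0,E}}D$. Since $\E(D)$ is crystalline, so is $V_E(D)\otimes_E\O_{X_E}$; Proposition \ref{equivalence E crystalline} then gives that $V_E(D)$ is $E$-crystalline. Restricting the evaluation isomorphism to $X_E\setminus\{\infty_{\tau_0}\}$ and invoking Proposition \ref{adjoints V cris}(2) identifies the isocrystal underlying $D_{\mathrm{cris},E}(V_E(D))$ with $(D,\varphi_q)$, and the matching of filtrations at $\infty_{\tau_0}$ is immediate from Lemma \ref{filtration and lattice}, since $\E(D)$ was constructed as the Beauville--Laszlo modification of $\E(D,\varphi_q)$ along the lattice encoded by $\mathrm{Fil}^\bullet D_K$.

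Conversely, for $V\in\mathrm{Rep}_E^{\mathrm{cris}}(G_K)$ with $D:=D_{\mathrm{cris},E}(V)$, I would produce a $G_K$-equivariant isomorphism $\E(D)\simeq V\otimes_E\O_{X_E}$ again by Beauville--Laszlo gluing. On $X_E\setminus\{\infty_{\tau_0}\}$ this is Proposition \ref{equivalence E crystalline}. At the completed stalk at $\infty_{\tau_0}$ one uses the dimension equality $\dim_K D_K=\dim_{K_{0,E}}D=\dim_E V$ (established inside the proof of Proposition \ref{equivalence E crystalline}) to upgrade the natural inclusion $D_K\hookrightarrow(V\otimes_E B_{\mathrm{dR}})^{G_K}$ into an isomorphism $D_K\otimes_K B_{\mathrm{dR}}\xrightarrow{\sim}V\otimes_E B_{\mathrm{dR}}$, under which the lattice $\mathrm{Fil}^0(D_K\otimes_K B_{\mathrm{dR}})$ coincides with $V\otimes_E B_{\mathrm{dR}}^+$ by Lemma \ref{filtration and lattice} and the very definition of $\mathrm{Fil}^\bullet D_K$ as the subspace filtration from $V\otimes_E B_{\mathrm{dR}}$. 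It follows that $V_E(D)=H^0(X_E,V\otimes_E\O_{X_E})=V$ and that $\E(D)\simeq V\otimes_E\O_{X_E}$ is semistable of slope $0$, hence $D$ is weakly admissible by Proposition \ref{admissible slopes 0}.

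The main technical hurdle I expect is the stalk comparison at $\infty_{\tau_0}$ in the converse direction: one must verify that the $E$-analyticity hypothesis of Lemma \ref{analytic rep} is what forces the ``$\tau_0$-component'' $D_K$ of the de Rham module to have the correct $K$-dimension $\dim_E V$, so that $D_K\otimes_K B_{\mathrm{dR}}$ fills out all of $V\otimes_E B_{\mathrm{dR}}$ rather than a proper summand. Once this is in place, the required lattice identification at $\infty_{\tau_0}$ is essentially tautological from the definition of the filtration on $D_K$, and the rest of the argument reduces to formal consequences of Proposition \ref{admissible slopes 0} and the preceding lemma.
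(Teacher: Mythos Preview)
Your proposal is correct and, for two of the three pieces (fully faithfulness and the fact that $D_{\mathrm{cris},E}(V)$ is weakly admissible), essentially matches the paper: both deduce $V=V_E(D_{\mathrm{cris},E}(V))$ from $B_{e,E}\cap B_{\mathrm{dR}}^+=E$ and read off weak admissibility from Proposition \ref{admissible slopes 0} plus the classification of bundles on $X_E$.

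Where you genuinely diverge is in the essential surjectivity direction. The paper does \emph{not} use the classification theorem to produce the isomorphism $V_E(D)\otimes_E\O_{X_E}\simeq\E(D)$; instead it follows the classical Colmez--Fontaine argument \cite[Proposition 4.5]{ColmezFontaine}: one works inside $D\otimes_{K_{0,E}}\mathrm{Frac}(B_E^+[1/t_E])$, extracts via Lemma \ref{Galois invariants} a sub-isocrystal $D'\subseteq D$ whose base change is the span of $V_E(D)$, and then runs a determinant trick together with the explicit rank-one computation of Lemma \ref{rank 1 case} to force $D'=D$ and the relevant change-of-basis matrix to be invertible over $B_E^+[1/t_E]$. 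Your route is shorter and more geometric: once Proposition \ref{admissible slopes 0} gives that $\E(D)$ is semistable of slope $0$, the classification immediately makes the evaluation map an isomorphism, and Proposition \ref{equivalence E crystalline} plus Lemma \ref{filtration and lattice} recover $D$ with its filtration. The paper's approach has the virtue of being self-contained at the level of period rings and of making Lemma \ref{rank 1 case} do visible work, while yours leverages the bundle classification (already invoked elsewhere in the section) to avoid the determinant argument entirely. One small comment: the ``technical hurdle'' you flag is not really a hurdle --- once Proposition \ref{equivalence E crystalline} gives $V\otimes_E B_E^+[1/t_E]\simeq D\otimes_{K_{0,E}}B_E^+[1/t_E]$, base-changing along $B_E^+[1/t_E]\to B_{\mathrm{dR}}$ already yields the isomorphism $D_K\otimes_K B_{\mathrm{dR}}\simeq V\otimes_E B_{\mathrm{dR}}$ directly, without a separate dimension count.
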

\begin{proof}
Recall that $B_{e,E}\cap B_{\mathrm{dR}}^+=E$ (as follows from the fundamental exact sequence $0\to E\to B_{e,E}\to B_{\mathrm{dR}}/B_{\mathrm{dR}}^+\to 0$). The first statement follows rather formally from this. More precisely, for each $V$ is the source, we have 
\begin{align*}
    V =(V\otimes_E B_E^+[1/t_E])^{\varphi_q=1}\cap \mathrm{Fil}^0(V\otimes_E B_{\mathrm{dR}})
=  V_E(D_{\mathrm{cris},E}(V)).
\end{align*}
Taking $G_K$-invariants yields, $V^{G_K}=\mathrm{Fil}^0(D_{\mathrm{cris},E}(V)^{\varphi_q=1})$. Using a suitable internal Hom, this implies full faithfulness of $D_{\mathrm{cris},E}$. We remark also that $V_E$ is a quasi-inverse on the essential image of $D_{\mathrm{cris},E}$. We next show that $D:=D_{\mathrm{cris},E}(V)$ is weakly admissible. This follows from the equality $\dim_E V_E(D)=\dim_E V=\dim_{K_{0,E}}D$, Proposition \ref{admissible slopes 0}, and the classification of vector bundles on $X_E$.

It remains to show that if $D$ is a weakly admissible filtered $\varphi_q$-module over $K$, then $V:=V_E(D)$ is $E$-crystalline, and $D_{\mathrm{cris},E}(V)\simeq D$ as filtered $\varphi_q$-modules. We will follow the proof of \cite[Proposition 4.5]{ColmezFontaine}. Let $C_E$ denote the fraction field of $B_E^+[1/t_E]$. As $(C_E)^{G_K}=K_{0,E}$ by Lemma \ref{Galois invariants}, by \cite[Lemme 4.6]{ColmezFontaine}, there exists a (necessarily unique) $K_{0,E}$-vector space $D'\subseteq D$ such that $D'\otimes_{K_{0,E}}C_E$ equals the $C_E$-subspace of $D\otimes_{K_{0,E}}C_E$ generated by $V$. As $V$ is fixed by $\varphi_q$, $D'$ is $\varphi_q$-stable, and hence naturally a subobject of $D$ (as a filtered $\varphi_q$-module). Moreover, as $V\subseteq D'\otimes C_E$ and $V\subseteq D\otimes B_E^+[1/t_E]$, we have $V\subseteq D'\otimes B_E^+[1/t_E]$, whence $V=V_E(D')$. Let $d_1,\ldots, d_r$ be a $K_{0,E}$-basis of $D'$. Choose also $v_1,\ldots,v_r\in V$ which spans $D'\otimes_{K_{0,E}} C_E$ over $C_E$. For each $i$, write $v_i=\sum_{j}b_{ij}d_j$ for some $b_{ij}\in B_E^+[1/t_E]$. Then $b:=\det(b_{ij})$ is nonzero, and so 
\begin{displaymath}
    w:=v_1\wedge\ldots \wedge v_r=b(d_1\wedge \ldots \wedge d_r)
\end{displaymath}
is a nonzero element in $W:=V_E(\wedge^r D')\subseteq B_E^+[1/t_E]\otimes \wedge^r D'$. As $t_H(D')\leq t_N(D')$ (by weak admissibility of $D$), it follows from Lemma \ref{rank 1 case} below that $t_H(D')=t_N(D'), W=Ew$ and that $b$ is a unit in $B_E^+[1/t_E]$. Thus the natural map $V\otimes_E B_E^+[1/t_E]\to D'\otimes B_E^+[1/t_E]$ is surjective, and so as the source and target are abstractly isomorphic (as $\dim_E V=\dim_E V_E(D')=\dim_{K_{0,E}}D'$ by weak admissibility of $D'$), it is in fact an isomorphism. Thus $V$ is $E$-crystalline and $D_{\mathrm{cris},E}(V)=D'\subseteq D$. Finally as $\dim D'=\dim_E V=\dim D$, we must have $D'=D$. 
\end{proof}
\begin{lem}\label{rank 1 case}
Let $D$ be a filtered $\varphi_q$-module over $K$. Assume $D$ is $1$-dimensional with a basis $d$. Then 
\begin{displaymath}
    \dim_E V_E(D)=\begin{cases} 0\quad \text{if $t_H(D)<t_N(D)$},\\
    1\quad \text{if $t_H(D)=t_N(D)$},\\
    \infty\quad \text{if $t_H(D)>t_N(D)$}.\end{cases}
\end{displaymath}
Moreover, in case $\dim V_E(D)=1$, any basis of $V_E(D)$ is of the form $bd$ for some unit $b\in B_E^+[1/t_E]$.
\end{lem}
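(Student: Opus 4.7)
The idea is to translate the computation of $V_E(D)$ into a cohomology computation on the Fargues--Fontaine curve $X_E$ and then invoke the classification of vector bundles.

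First I would observe that $\E(D) = \E(D, \varphi_q, \mathrm{Fil}^\bullet D_K)$ is a line bundle on $X_E$, and by the degree formula established in the proof of Proposition \eqref{admissible slopes 0}(1), its degree (and hence, since the rank is $1$, its slope) equals $\lambda := t_H(D) - t_N(D) \in \mathbf{Z}$. By the classification of vector bundles on $X_E$ (\cite[Th\'eor\`eme 8.2.10]{FFCurves}), this yields an isomorphism $\E(D) \cong \O_{X_E}(\lambda)$.

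Next I would identify $V_E(D)$ with the space of global sections $H^0(X_E, \E(D))$. This is essentially built into the Beauville--Laszlo description of $\E(D)$ as the modification of $\E(D, \varphi_q)$ at $\infty_{\tau_0}$ along the lattice $\mathrm{Fil}^0(D_K \otimes_K B_{\mathrm{dR}})$: a global section is precisely an element of $V_{\mathrm{cris}, E}(D) = (D \otimes_{K_{0,E}} B_E^+[1/t_E])^{\varphi_q=1}$ whose image in $\widehat{(\O_{X_E})}_{\infty_{\tau_0}} \cong B_{\mathrm{dR}}$ lies in $\mathrm{Fil}^0(D_K \otimes_K B_{\mathrm{dR}})$, which matches the definition of $V_E(D)$. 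The three dimension cases in the lemma then follow immediately from the formula $\dim_E H^0(X_E, \O(\lambda)) = 0, 1, \infty$ for $\lambda < 0$, $\lambda = 0$, $\lambda > 0$ recalled in the paragraph preceding Theorem \eqref{weakly admissible vs crys rep}.

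For the basis claim in the case $\lambda = 0$, we have $\E(D) \cong \O_{X_E}$, and any nonzero global section $v \in V_E(D)$ has effective divisor of degree zero, hence is nowhere-vanishing, so it trivializes $\E(D)$. Restricting to the affine open $X_E \setminus \{\infty_{\tau_0}\}$, $v$ trivializes $\E(D, \varphi_q)|_{X_E \setminus \{\infty_{\tau_0}\}}$; equivalently, $v$ is a $B_{e,E}$-basis of $V_{\mathrm{cris}, E}(D)$. Base-changing along the isomorphism $V_{\mathrm{cris}, E}(D) \otimes_{B_{e,E}} B_E^+[1/t_E] \xrightarrow{\sim} D \otimes_{K_{0,E}} B_E^+[1/t_E]$ from the proof of Proposition \eqref{adjoints V cris}, $v$ becomes a $B_E^+[1/t_E]$-basis of $D \otimes_{K_{0,E}} B_E^+[1/t_E]$; writing $v = bd$ and using that $d$ is already such a basis forces $b \in (B_E^+[1/t_E])^\times$.

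\textbf{Main obstacle.} Once the dictionary with $X_E$ is in place, there is no serious difficulty: everything reduces to the classification of vector bundles on $X_E$ together with the elementary observation that a nonzero global section of a degree-zero line bundle is nowhere vanishing. The only mildly delicate step is the bookkeeping to verify that the identification $V_E(D) = H^0(X_E, \E(D))$ is consistent with the Beauville--Laszlo description employed in the paper, which is routine.
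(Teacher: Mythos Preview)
Your argument is correct but takes a genuinely different route from the paper. The paper gives a direct computation: writing $\varphi_q(d)=\pi^{t_N(D)}u\,d$ with $u\in W_{\O_E}(k)^\times$ and choosing $x\in W_{\O_E}(\overline{k})^\times$ with $\varphi_q(x)=ux$, one checks that
\[
V_E(D)=t_E^{-t_H(D)}x^{-1}\,\mathrm{Fil}^{0}(B_E^+[1/t_E])^{\varphi_q=\pi^{t_H(D)-t_N(D)}},
\]
and the dimension trichotomy (as well as the unit claim when $t_H=t_N$, since $t_E$ and $x$ are units in $B_E^+[1/t_E]$) then follows from the fundamental exact sequence. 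Your approach instead packages everything geometrically: you identify $V_E(D)=H^0(X_E,\E(D))$, compute $\deg\E(D)=t_H(D)-t_N(D)$ via Proposition \eqref{admissible slopes 0}(1), invoke the classification to get $\E(D)\cong\O(\lambda)$, and read off $\dim H^0$. For the unit claim you argue that a nonzero section of a degree-zero line bundle is nowhere vanishing, hence trivializes $\E(D)$, and then base-change along the isomorphism from Proposition \eqref{adjoints V cris}. The two arguments are morally the same---the fundamental exact sequence is exactly the cohomology of $\O(\lambda)$---but the paper's version is more explicit and self-contained (it produces a concrete basis vector), while yours is cleaner once the curve machinery is in place and makes the ``nowhere vanishing'' reason for the unit claim transparent. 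There is no circularity: Proposition \eqref{admissible slopes 0}(1) and the $H^0$ formula are established independently of this lemma.
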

\begin{proof}
    Write $\varphi_q(d)=\pi^{t_N(D)}uv$ with $u\in W_{\O_E}(k)^\times$. Choose $x\in W_{\O_E}(\overline{k})^\times$ so that $\varphi_q(x)=ux$. One then checks easily that
    \begin{displaymath}
        V_E(D)=t_E^{-t_H(D)}x^{-1}\mathrm{Fil}^{0}(B_E^+[1/t_E])^{\varphi_q=\pi^{t_H(D)-t_N(D)}}.
    \end{displaymath}
The lemma now follows from this and the fundamental exact sequence.
\end{proof}
\begin{remark}
For a weakly admissible $D$ in $\mf(K)$, let $V_E'(D):=(D\otimes_{K_{0,E}}(B_{\mathrm{cris}}\otimes_{E_0}E))^{\varphi_q=1}\cap \mathrm{Fil}^0(D_K\otimes_K B_{\mathrm{dR}})$. As $B_E^+[1/t_E]\subseteq B_{\mathrm{cris}}\otimes_{E_0}E$, $V_E(D)\subseteq V_E'(D)$. Moreover, by \cite[Proposition (3.3.4)]{KisinRen}, $\dim_E V_E'(D)\leq \dim D$. As $\dim V_E(D)=\dim H^0(X_E,\E(D))=\dim D$ (recall that $\E(D)$ is semistable of slope $0$ by weak admissibility of $D$), $V_E'(D)=V_E(D)$. Thus, the definition of $V_E(D)$ here agrees with the one in \cite[\textsection 3]{KisinRen} (for weakly admissible $D$).
\end{remark}
\medskip
\printbibliography
\addcontentsline{toc}{section}{References}

\textsc{\small LAGA, Universit\' e Paris 13, 99 Avenue Jean-Baptiste Cl\' ement, 93430 Villetaneuse, France}\\
\noindent\textit{Email address}: \href{mailto:dat.pham@math.univ-paris13.fr}{\texttt{dat.pham@math.univ-paris13.fr}}\\

Current address: \textsc{\small CNRS, IMJ-PRG – UMR 7586, Sorbonne Universit\' e, 4 place Jussieu, 75005 Paris, France}\\
\indent\textit{Email address}: \href{mailto:dat.pham@imj-prg.fr}{\texttt{dat.pham@imj-prg.fr}}
\end{document}